\theoremstyle{plain}
\newtheorem{theorem}{Theorem}[section]
\newtheorem{maintheorem}{Theorem}
\newtheorem{definition}{Definition}[section]
\newtheorem{proposition}[theorem]{Proposition}
\newtheorem{lemma}{Lemma}
\newtheorem{rem}{Remark}[section]
\numberwithin{equation}{section}
\begin{document}

\title{  Circles and line segments as independence attractors of graphs}
\author[1]{Garima Khetawat\footnote{22ma05002@iitbbs.ac.in}}
\author[2]{Moumita Manna \footnote{Corresponding author, s23ma09002@iitbbs.ac.in}}
\author[3]{Tarakanta Nayak	\footnote{tnayak@iitbbs.ac.in}}
\affil[1,2,3]{Department of Mathematics, 
Indian Institute of Technology Bhubaneswar, India}
\date{}
\maketitle
 \linespread{1}
\begin{abstract}
 By an independent set in a simple graph $G$, we mean a set of pairwise non-adjacent vertices in $G$. The independence polynomial of $G$ is defined as $I_G(z)=a_0 + a_1 z + a_2 z^2+\cdots+a_\alpha z^{\alpha}$, where $a_i$ is the number of independent sets in $G$ with cardinality $i$ and $\alpha$ is the cardinality of a largest independent set in $G$,  known as the independence number of $G$.  Let $G^m$ denote the $m$-times lexicographic product of $G$ with itself. The independence attractor  of $G$, denoted by   $\mathcal{A}(G)$, is defined as  $\mathcal{A}(G) =  \lim_{m\rightarrow \infty} \{z: I_{G^m}(z)=0\}$, where the limit is taken with respect to the Hausdorff metric on the space of all compact subsets of the plane. This paper deals with independence attractors that are topologically simple. It is shown that $\mathcal{A}(G)$ can never be a circle.  
If  $\mathcal{A}(G)$ is a line segment  then it is proved that the line segment is $[-\frac{4}{k}, 0]$ for some $k \in \{1, 2, 3, 4 \}$. Examples of graphs with independence number four are provided whose independence attractors are line segments.

\end{abstract}
\noindent {\bf Key words:} Independence polynomials, Independence attractors, Line segments, Graphs
\\
\noindent {\bf AMS Subject Classification:} 37F20; 37F10; 05C69; 05C31 

%\clearpage
 \section{Introduction}
 Let $G$ be a simple graph, i.e., without a loop or a parallel edge with the vertex set $V(G)$ and the edge set $E(G)$. Two vertices $u$ and $v$ of $G$ are said to be adjacent, we denote it by $u\sim v$, if there is an edge between $u$ and $v$. An {\em independent set} in $G$ is a set of pairwise non-adjacent vertices in it. By an {\em $i$-independent set}, we mean an independent set with cardinality $i$. Let $a_i$ be the number of $i$-independent sets in $G$. The {\em independence polynomial} of $G$  is defined as $I_G(z) =a_0 + a_1 z + a_2 z^2+\cdots+a_\alpha z^{\alpha}$, where $\alpha$ is the cardinality of a largest independent set (\cite{GH}). The number $\alpha$ is called the {\em independence number} of the graph $G$. For every graph $G$, $a_0=1$, as the empty set is the only independent set with no element. Similarly, $a_1$ is the number of vertices in $G$ as each vertex constitutes an independent set with cardinality one. Further, $a_i \neq 0$ for all $i=2, 3,\ldots, \alpha$.  By an empty graph, we mean a graph with no edge but with at least one vertex. The independence polynomial of an empty graph with $n$ vertices is $1+nz+\binom{n}{2}z^2+\binom{n}{3}z^3\cdots +\binom{n}{n}z^n$. By a graph, we mean a non-empty graph unless stated otherwise. 
 
\par The roots of an independence polynomial, also called independence roots of a graph, are important. Their nature and location have been studied by many researchers.  Csikvari proved that, for every graph, the independence root with smallest modulus is a negative real number~\cite{PC}. In ~\cite{CS}, it is shown that all the independence roots of a claw-free graph are real.  Considering specific families of graphs, Brown et al. proved that independence roots are dense in the plane. The  independence roots of paths are dense in $(-\infty, -\frac{1}{4}]$~\cite{brownetal2004}.  We are concerned with the limit set of independence roots of a sequence of graphs arising out of  lexicographic product. 
\par
Given two graphs $G$ and $H$, the {\em lexicographic product {\rm(}or composition{\rm )}} of $G$ with $H$, denoted by $G[H]$, is defined as the graph with vertex set $V(G[H])=V(G) \times V(H)$, where $V(G)$ and $ V(H)$ denote the vertex sets of $G$ and $H$ respectively. Two vertices $(u,v)$ and $(u^{\prime}, v^{\prime})$  in $V(G[H])$ are adjacent in $G[H]$ if and only if either $u\sim u^{\prime}$ in $ G$, or $u=u^{\prime}$ and $v\sim v^{\prime}$ in $H$. This amounts to replacing each vertex of $G$ with a copy of $H$ and joining two vertices of two different copies according to the adjacency of vertices of $G$.  The $m$-times lexicographic product of a graph $G$ with itself is denoted by $G^m$. 
For a polynomial $f$, let $\text{Roots}(f)$ denote the set of all its roots, i.e.,  $\text{Roots}(f)=\{z: f(z)=0\}$.
The set $ \text{Roots} (I_{G^m})$ is finite and therefore a compact subset of the plane for each $m$. The limit  of $ \text{Roots} (I_{G^m})$ exists as $m \to \infty$ with respect to the Hausdorff metric (see Section \ref{ind-attractor-fractal}), and is called the \textit{independence attractor}  $\mathcal{A}(G)$ of  the graph $G$~\cite{hickman2001}.
A key fact relating the lexicographic product of a graph with itself and its independence polynomial is given by  $I_{G^2}(z) =I_G (I_G (z)-1)$ (see Theorem 1.1,~\cite{brownetal2003}). Denoting the reduced independence polynomial $I_G (z)-1$ of $G$ by $P_G (z)$, note that  $I_{G^2}(z) =P_G (P_G (z))+1=P_G ^2 (z)+1$, and in general, $I_{G^m}(z) =P_G ^m (z)+1$. In other words,
\begin{equation}
	 Roots(I_{G^m})  =\{z: P_G^m (z)=-1\}~\mbox{for all}~ m.
\end{equation}   
By virtue of this equality, the iterative behaviour of $P_G$ assumes significance for investigating the independence attractor of $G $. This simple but very important fact  gives rise to the following equivalent definition of the independence attractor.
\begin{definition}$\mathcal{A}(G) =  \lim_{m\rightarrow \infty} P_G ^{-m}(-1),~\mbox{where }~ P_G ^{-m}(-1) =\{z: P_G^m (z)=-1\}$.
\end{definition}
The limit of $P_G ^{-m}(-1)$ as $m \to \infty$, usually referred to as independence fractal and denoted by $\mathcal{F}(G)$, has been a useful tool to understand the independence attractor  of $G$. In fact,  these two objects are the same except when $-1$ is a multiple root of $I_G$ for every non-empty graph $G$.  If $-1$ is a multiple root of $I_G $ then  $\mathcal{A}(G)$ is a disjoint union of $\mathcal{F}(G)$ and $\cup_{m \geq 1} P_G ^{-m}(-1)$ (see Theorem~\ref{Thm_attr1}). The Julia set of a polynomial $Q$, denoted by  $\mathcal{J}(Q)$, is defined as the boundary of its filled Julia set $K(Q)=\{z: \{Q^n(z)\}_{n>0} ~\mbox{is bounded}\}$. It is always a closed set.  A very useful result obtained by Brown et al.  states that $\mathcal{F}(G) =\mathcal{J}(P_G)$ ~\cite{brownetal2003} and this is the basis of the proofs of our main results. We revisit these ideas and reproduce the proofs, often with additional explanations for the sake of completeness. Unlike ~\cite{brownetal2003} and \cite{tk-barik-20} where independence fractal is the primary object of study, this article considers independence attractor as the main object and uses the idea of independence fractal to understand the former.
\par
For an empty graph (i.e., without any edge) $G$ on $n$ vertices,  $P_G (z)= (1+z)^n -1$ and therefore, $\mathcal{A}(G) =\{-1\}$. For a complete graph $K_n$ on $n$ vertices, $P_{K_n} (z)=nz$ and $P^{-m}_{K_n}(-1)=-\frac{1}{n^m}$. Thus, $\mathcal{A}(K_n)=\{0\}$. This article deals with the independence attractors of graphs that are neither  empty nor complete. Independence attractor of graphs with independence number two can be totally disconnected or connected (\cite{hickman2001}).

This article investigates the possibility of some topologically simple independence attractors, namely a circle or a line segment. The first possibility is ruled out.  
\begin{maintheorem}
The  independence attractor of any graph cannot be a circle.
\label{nocircle}
\end{maintheorem}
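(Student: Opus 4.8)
The plan is to transfer the statement to complex dynamics via the two facts recalled in the introduction, $\mathcal{F}(G)=\mathcal{J}(P_G)$ and $\mathcal{A}(G)\supseteq\mathcal{F}(G)$, and then to exploit how rigid a polynomial must be in order to have a round circle as Julia set. First I would dispose of the ``extra points''. By the dichotomy stated in Theorem~\ref{Thm_attr1}, either $\mathcal{A}(G)=\mathcal{J}(P_G)$, or $-1$ is a multiple root of $I_G$ and $\mathcal{A}(G)=\mathcal{J}(P_G)\sqcup E$ with $E=\bigcup_{m\ge 1}P_G^{-m}(-1)\neq\emptyset$. In the latter case $I_G(-1)=I_G'(-1)=0$ give $P_G(-1)=-1$ and $P_G'(-1)=0$, so $-1$ is a superattracting fixed point lying in the Fatou set; consequently its whole backward orbit $E$ lies in the Fatou set, is disjoint from $\mathcal{J}(P_G)$, and can accumulate only on $\mathcal{J}(P_G)$. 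Hence each point of $E$ is isolated in $\mathcal{A}(G)$. Since a circle has no isolated point, this case cannot yield a circle, and it remains to show that $\mathcal{J}(P_G)$ itself is never a round circle.

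Suppose then that $\mathcal{J}(P_G)$ is the circle of centre $c$ and radius $r>0$, and conjugate by the affine map $\psi(z)=(z-c)/r$, so that $g:=\psi\circ P_G\circ\psi^{-1}$ is a polynomial of degree $\alpha$ whose Julia set is the unit circle. Because a Julia set is completely invariant, $g$ maps the unit circle onto itself, i.e. $|g(w)|=1$ whenever $|w|=1$. I would turn this into an algebraic constraint: writing $\bar g$ for the polynomial with complex-conjugated coefficients, the relation $g(w)\,\bar g(1/w)=1$ holds on $|w|=1$, hence identically, and a comparison of coefficients from the top degree downward forces all lower coefficients of $g$ to vanish, leaving $g(w)=\lambda w^\alpha$ with $|\lambda|=1$. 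Equivalently, $P_G$ is affinely conjugate to $z\mapsto z^\alpha$. (Alternatively one may invoke the classical fact, proved through Böttcher coordinates and the connectedness of the filled Julia set, that a polynomial of degree $d\ge 2$ has a round circle as Julia set exactly when it is affinely conjugate to $z^d$.)

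The contradiction then comes from a single fixed-point multiplier. Since $I_G(z)=1+a_1z+\cdots$ with $a_1=|V(G)|=:n$, we have $P_G(0)=0$ and $P_G'(0)=a_1=n$, so $0$ is a fixed point of $P_G$ with multiplier $n$. Conjugacy preserves multipliers at corresponding fixed points, and the finite fixed points of $\lambda w^\alpha$ have multiplier $0$ (at $w=0$) or $\alpha$ (at the solutions of $w^{\alpha-1}=\lambda^{-1}$, where $g'(w)=\alpha\lambda w^{\alpha-1}=\alpha$). Thus $0$ forces $n\in\{0,\alpha\}$. But $G$ is neither empty nor complete: non-completeness gives $\alpha\ge 2$, while non-emptiness means the whole vertex set fails to be independent, so $\alpha<n$. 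Hence $0<\alpha<n$ and $n\notin\{0,\alpha\}$, a contradiction. Therefore $\mathcal{J}(P_G)$ is never a circle, and together with the first step this shows $\mathcal{A}(G)$ cannot be a circle.

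The step I expect to be the main obstacle is the rigidity in the second paragraph, namely making airtight the passage from ``Julia set is a round circle'' to the normal form $\lambda w^\alpha$. The coefficient comparison is elementary once one knows $g$ preserves the unit circle, and that preservation is exactly the complete invariance of the Julia set; the isolated-point observation and the multiplier computation are then routine. A secondary point worth stating carefully is why $\alpha<n$ and $\alpha\ge 2$ both follow from $G$ being neither empty nor complete, since these inequalities are precisely what excludes the two admissible multipliers $0$ and $\alpha$.
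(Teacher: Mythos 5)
Your proof is correct, and it reaches the contradiction by a noticeably leaner route than the paper's. Both arguments share the same skeleton: reduce to $\mathcal{J}(P_G)$ via Theorem~\ref{Thm_attr1}, conjugate a circular Julia set to the unit circle to force $g(w)=\lambda w^{\alpha}$ with $|\lambda|=1$, and then exploit the fixed point $0$ of $P_G$, whose multiplier $P_G'(0)=a_1$ is the number of vertices. The differences are in the two sub-arguments. For the multiple-root case, the paper notes that $\mathcal{J}(P_G)$ would be a proper subset of the circle, so the Fatou set is connected and hence equals the basin of $\infty$, contradicting the existence of the finite superattracting fixed point $-1$; you instead observe that the points of $\bigcup_{m\ge1}P_G^{-m}(-1)$ are isolated in $\mathcal{A}(G)$ (they lie in the Fatou set and, by Theorem~\ref{Thm_attr1}(3), accumulate only on $\mathcal{J}(P_G)$), while a circle has no isolated points. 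Both are sound. For the main case, the paper first proves Lemma~\ref{real_points} to pin the circle down as $\{|z-z_0|=|z_0|\}$ with $z_0<0$, normalizes so that $0\mapsto 1$, and extracts $\beta=1$ and $a_1=n$ before classifying $G=K_n^c$ in Lemma~\ref{circle}; you bypass the symmetry lemma entirely by noting that affine conjugacy preserves multipliers, that every finite fixed point of $\lambda w^{\alpha}$ has multiplier $0$ or $\alpha$, and that $a_1=|V(G)|>\alpha\ge 2$ for a graph that is neither empty nor complete. Your shortcut buys a shorter proof of Theorem~\ref{nocircle}; the paper's longer detour buys the exact classification of graphs with circular independence fractal, which it records as a lemma of independent interest. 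Two small points to make airtight: state at the outset that empty and complete graphs have singleton attractors (this is what licenses $\alpha\ge 2$, without which $P_G$ has degree one and the Julia-set machinery does not apply), and note that in the coefficient comparison the identity $g(w)\,\bar g(1/w)=1$ extends from $|w|=1$ to all $w$ by the identity theorem for Laurent polynomials before you read off that the lower coefficients vanish.
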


 Barik et al. in \cite{tk-barik-20}  
proved that, if the  independence attractor of a graph with independence number three is a line segment then it is $[-\frac{4}{k},0]$ for some  $k \in \{1,2,3,4\}$.  We prove this for graphs with all possible independence numbers.
\begin{maintheorem}
	\label{linesegment}
	If a line segment $J$ is the independence attractor of a graph then   $J= [-\frac{4}{k}, 0]$ for some $k \in \{1, 2, 3, 4\}$. 
\end{maintheorem}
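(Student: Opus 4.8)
The plan is to push the problem into complex dynamics through the identity $\mathcal{A}(G)=\mathcal{F}(G)=\mathcal{J}(P_G)$ and then to exploit the rigidity of polynomials whose Julia set is an interval. First I would show that if $\mathcal{A}(G)$ is a line segment $J$, then in fact $\mathcal{J}(P_G)=J$: since $\mathcal{F}(G)=\mathcal{J}(P_G)\subseteq\mathcal{A}(G)$ and $\mathcal{A}(G)\setminus\mathcal{J}(P_G)$ is contained in the countable set $\bigcup_{m\geq1}P_G^{-m}(-1)$, the set $\mathcal{J}(P_G)$ contains all but countably many points of $J$ and, being closed, equals $J$. Next I would locate $J$. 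Because $P_G$ has real coefficients, $\mathcal{J}(P_G)$ is symmetric about the real axis, so $J$ is either horizontal or vertical. The vertical case is excluded: if $J=i[-t,t]$ then conjugating by $z\mapsto -iz$ moves its Julia set onto a real interval, which forces the conjugated polynomial to have real coefficients and hence $a_j=0$ for every even $j$, contradicting $a_2\neq0$. Thus $J\subset\mathbb{R}$. Since $P_G(x)-x=(a_1-1)x+a_2x^2+\cdots>0$ for all $x>0$ (using $a_1=|V(G)|>1$), every positive real escapes to $+\infty$ and lies in the Fatou set, while $0$ is a repelling fixed point ($P_G(0)=0$, $P_G'(0)=a_1>1$) and so lies in $J$; hence $J=[c,0]$ for some $c<0$.

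The second stage is the classification step. I would invoke the classical rigidity result that a polynomial of degree $\geq2$ whose Julia set is a line segment is affinely conjugate to a Chebyshev polynomial, writing $P_G=h^{-1}\circ \hat{T}_\alpha\circ h$, where $\hat{T}_\alpha$ is the degree-$\alpha$ Chebyshev polynomial with $\mathcal{J}(\hat{T}_\alpha)=[-2,2]$ and $h(z)=kz+2$ is the affine map sending $[c,0]$ onto $[-2,2]$ with $h(0)=2$. Since the multiplier at a fixed point is a conjugacy invariant, the multiplier of $P_G$ at $0$ equals that of $\hat{T}_\alpha$ at its fixed endpoint $2$, giving $a_1=\alpha^2$, i.e. $|V(G)|=\alpha^2$. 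Comparing leading coefficients in $P_G=\frac{\hat{T}_\alpha(kz+2)-2}{k}$ gives $a_\alpha=k^{\alpha-1}$, and reading off the endpoints gives $c=-4/k$. Finally $k$ is rational (from $a_2=b_2k$, where $b_2=\frac{\alpha^2(\alpha^2-1)}{12}$ is the corresponding Chebyshev coefficient) and $k^{\alpha-1}=a_\alpha\in\mathbb{Z}$, so $k$ is a positive integer. Hence $J=[-4/k,0]$ with $k\in\mathbb{Z}_{>0}$, and it remains only to prove $k\leq4$.

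The last stage, the bound $k\leq4$, is where I expect the main obstacle to lie, and here I would use the combinatorial meaning of the coefficients. As the independence number of $G$ is $\alpha$, the complement $\overline{G}$ is $K_{\alpha+1}$-free, so by Tur\'an's theorem the number $a_2$ of non-adjacent pairs satisfies $a_2\leq\frac{\alpha^3(\alpha-1)}{2}$; combined with $a_2=\frac{\alpha^2(\alpha^2-1)}{12}k$ this yields $k\leq\frac{6\alpha}{\alpha+1}<6$, hence $k\leq5$, and in fact $k\leq4$ whenever $\alpha\leq4$. The delicate point is to exclude $k=5$, which the inequality leaves open only for $\alpha\geq5$. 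For $\alpha=5$ equality in Tur\'an's bound is forced, so $G$ would be the disjoint union of $\alpha$ copies of $K_\alpha$; but then $P_G=(1+\alpha z)^\alpha-1$, whose Julia set is not a segment, a contradiction. For $\alpha\geq6$ the edge bound alone is not sharp enough, and this is the crux: one must show that the rigid coefficient profile $a_i=b_ik^{i-1}$ dictated by $\hat{T}_\alpha$ with $k=5$ is not realizable by any graph, for instance by adding a Kruskal--Katona-type inequality relating $a_3$ (triangles of $\overline{G}$) to $a_2$ (edges of $\overline{G}$), or by a direct structural analysis of graphs on $\alpha^2$ vertices with non-adjacent-pair count close to the Tur\'an threshold. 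Once $k=5$ is ruled out, $k\in\{1,2,3,4\}$ and $J=[-\tfrac{4}{k},0]$, which completes the proof.
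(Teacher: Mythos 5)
Your overall strategy coincides with the paper's: pass to $\mathcal{J}(P_G)$, use symmetry and the dynamics on the positive real axis to force $J=[c,0]$, conjugate to a Chebyshev polynomial by an affine map fixing the endpoint $0$, read off $a_1=\alpha^2$, $a_2$ and the leading coefficient to conclude that $k=-4/c$ is a positive integer, and then bound $k$. Several of your local arguments are correct and in places cleaner than the paper's: the countability argument giving $\mathcal{A}(G)=\mathcal{J}(P_G)$ replaces the paper's Fatou-component argument, and Tur\'an's theorem applied to $\overline{G}$ replaces the trivial bound $a_2\le\binom{\alpha^2}{2}$, disposing of $k=6$ and of $k=5$ for $\alpha\le 4$ in one stroke.

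The proof is nonetheless incomplete at exactly the point you yourself flag as ``the crux'': excluding $k=5$ for $\alpha\ge 6$ (and, strictly, also for $\alpha=5$, where you assert without proof that the Julia set of $(1+\alpha z)^\alpha-1$ is not a segment; this is true but needs a line of justification, e.g.\ its $a_3$ does not match the Chebyshev profile). For $\alpha\ge 6$ you only gesture at possible tools, and the one you name concretely, a Kruskal--Katona-type inequality, points in the wrong direction: Kruskal--Katona bounds the number of triangles of $\overline{G}$ from \emph{above} in terms of its edge count, whereas the Chebyshev profile for $k=5$ prescribes $E=\frac{5}{12}\alpha^2(\alpha^2-1)$ edges but only $T=\frac{5}{72}\alpha^2(\alpha^2-1)(\alpha^2-4)$ triangles, which is \emph{below} the minimum that any graph with that many edges must contain. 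What is needed is a supersaturation-type \emph{lower} bound on triangles; the paper uses the Nordhaus--Stewart inequality $T\ge\frac{E}{3N}\left(4E-N^2\right)$ (valid when $4E>N^2$), which applied to $\overline{G}$ with $N=\alpha^2$ reduces the $k=5$ profile to the absurdity $\alpha^2\le -2$ for every $\alpha\ge 2$. Until a step of this kind is supplied, your argument establishes only $k\in\{1,2,3,4,5\}$.
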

For each $k \in \{1,2,3,4\}, $ there are graphs with independence number three whose independence attractor is $[-\frac{4}{k},0]$.  Indeed, the following are proved in \cite{tk-barik-20}, the exact references of which as appeared in \cite{tk-barik-20}  are given in brackets: (i)There are exactly $17$ connected graphs whose independence attractor is $[-4,0]$ (Theorem 3.4); (ii) An algorithm is given to find all graphs with $[-2,0]$ as independence attractor and it is seen that there are at least $902$ such graphs (page 70); (iii) There is a connected graph with $[-\frac{4}{3},0]$ as independence attractor (Theorem 3.3 and Figure (3)) and (iv)  $\mathcal{A}(K_4 \cup K_4 \cup K_1)= [-1,0]$ (Theorem 4.1).  We provide examples of graphs with independence number four whose independence attractor is $[-\frac{4}{k},0]$ for each $k=1,2,3,4$.
\begin{maintheorem}
	For each $k\in \{1,2,3,4\}$, there is at least one graph with independence number four such that its independence attractor is $[-\frac{4}{k}, 0]$. Further, there is no disconnnected graph whose independence attractor is $[-\frac{4}{k}, 0]$ for any $k=1,2$. 
	\label{exampls-ind-no-4}
	\end{maintheorem}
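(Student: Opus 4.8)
My plan rests on the identity $\mathcal{F}(G)=\mathcal{J}(P_G)$ together with the classical fact that a polynomial whose Julia set is a line segment is affinely conjugate to a Chebyshev polynomial. Write $T_d$ for the Chebyshev polynomial normalised by $T_d(2\cos\theta)=2\cos(d\theta)$, so that $T_2(w)=w^2-2$, each $T_d$ is monic, $T_d(2)=2$, and $\mathcal{J}(T_d)=[-2,0]$... more precisely $\mathcal{J}(T_d)=[-2,2]$; let $\phi(z)=kz+2$ be the affine map carrying $[-\frac{4}{k},0]$ onto $[-2,2]$. Since $P_G(0)=I_G(0)-1=0$ and $0$ is the right endpoint of $[-\frac{4}{k},0]$, while $T_d$ is the unique sign-branch fixing the right endpoint $2$, the line-segment hypothesis will force $P_G=\phi^{-1}\circ T_d\circ \phi$ with $d=\alpha(G)$. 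I will also use $P_{H^2}=P_H\circ P_H$, a special case of $I_{G^2}(z)=I_G(P_G(z))$, so that lexicographic squaring is self-composition.

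\textbf{Existence.} I would first observe that $P(z)=kz^2+4z$ equals $\phi^{-1}\circ T_2\circ\phi$, whence $\mathcal{J}(P)=\phi^{-1}([-2,2])=[-\frac{4}{k},0]$, and that $P=I-1$ for the degree-two polynomial $I(z)=1+4z+kz^2$. The problem then reduces to exhibiting, for each $k$, a graph $H$ on four vertices with independence number two and exactly $k$ independent pairs, i.e.\ $I_H=1+4z+kz^2$: one may take $H=K_4-e$ for $k=1$, the cycle $C_4$ for $k=2$, the path $P_4$ for $k=3$, and $H=2K_2$ for $k=4$. Putting $G=H^2$ gives $\alpha(G)=\alpha(H)^2=4$, and $P_G=P_H\circ P_H=\phi^{-1}\circ T_4\circ\phi$, so $\mathcal{F}(G)=\mathcal{J}(P_G)=[-\frac{4}{k},0]$; a direct expansion confirms $I_G=1+16z+20kz^2+8k^2z^3+k^3z^4$. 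Finally, since $-1\in[-\frac{4}{k},0]$ for every $k\le 4$ and the Julia set is completely invariant, the set $\bigcup_{m\ge 1}P_G^{-m}(-1)$ that could separate $\mathcal{A}(G)$ from $\mathcal{F}(G)$ already lies inside $[-\frac{4}{k},0]$; hence $\mathcal{A}(G)=\mathcal{F}(G)=[-\frac{4}{k},0]$.

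\textbf{Non-existence.} Suppose $\mathcal{A}(G)=[-\frac{4}{k},0]$ with $k\in\{1,2\}$. Because $\mathcal{A}(G)\setminus\mathcal{F}(G)$ is at most countable while $\mathcal{A}(G)$ is a segment and $\mathcal{F}(G)=\mathcal{J}(P_G)$ is closed, I would first deduce $\mathcal{J}(P_G)=[-\frac{4}{k},0]$, so that $I_G=P_G+1$ is completely determined by $d=\alpha(G)$ and $k$. If $G=G_1\cup G_2$ were disconnected, then $I_G=I_{G_1}I_{G_2}$ would be a product of independence polynomials, each with constant term $1$, positive integer coefficients, degrees $d_1+d_2=d$, and leading coefficients multiplying to $k^{d-1}$. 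For $\alpha=4$ this is a finite check: all roots of $I_G$ are real and negative (they are $\phi^{-1}$ of the $d$ roots of $T_d(w)=2-k$, with $2-k\in(-2,2)$ for $k=1,2$), so the degree splits are $(1,3),(2,2),(1,1,2),(1,1,1,1)$. A linear factor must be $1+nz$ with $n\mid k^{3}$ and root $-\frac{1}{n}$; checking the finitely many admissible $n$ shows $I_G(-\frac{1}{n})\ne 0$, ruling out every split containing a linear factor. For the $(2,2)$ split, writing $I_G=(1+az+bz^2)(1+a'z+b'z^2)$, the top coefficient forces $bb'=k^{3}$, and then $a+a'=16$ and $ab'+a'b=8k^{2}$ admit no positive-integer solution (for $k=1$, $b=b'=1$ makes $a+a'$ equal to both $16$ and $8$; for $k=2$, each divisor pair of $bb'=8$ is incompatible with $a+a'=16$). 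Hence no disconnected graph with $\alpha=4$ realises $[-\frac{4}{k},0]$ for $k=1,2$.

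\textbf{Main obstacle.} The delicate point, which I expect to be the hardest, is non-existence for arbitrary independence number. There the forced polynomial $I_G$ is typically \emph{reducible} over $\mathbb{Z}$: its roots are shifted and scaled values $2\cos$ of rational multiples of $\pi$, so it splits into cyclotomic-type factors, and since all roots are negative these factors automatically carry positive coefficients. Thus irreducibility cannot be the mechanism. Instead one must argue from the arithmetic constraints that an admissible factor needs constant term $1$, a positive integer leading coefficient, and that the leading coefficients multiply to $k^{d-1}$, and then show for $k=1,2$ that no grouping of the Galois-conjugacy classes of the roots meets all of these simultaneously (or that any grouping which does yields a polynomial that is not the independence polynomial of any graph). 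Establishing this uniformly in $d$ is the crux; for the examples in the statement it collapses to the finite coefficient computation above, which is why I would present the $\alpha=4$ case in full and treat the general claim through the divisibility-plus-conjugacy analysis just outlined.
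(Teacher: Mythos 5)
Your proposal is correct, and its existence half takes a genuinely different route from the paper's. The paper exhibits ad hoc examples: explicit connected graphs (given by their complements in Figures~\ref{Com G1} and \ref{Com G2}) with reduced independence polynomials $16z+20z^2+8z^3+z^4$ and $16z+40z^2+32z^3+8z^4$ for $k=1,2$, and for $k=3,4$ two-component disconnected graphs assembled from catalogues of connected graphs realising $1+4z+3z^2$, $1+12z+9z^2$ and $1+8z+8z^2$ (Lemma~\ref{examples-disconnected}), the identification of the resulting segments coming from Proposition~\ref{disconnectedgraphs}. Your uniform construction $G=H^2$ with $I_H=1+4z+kz^2$ (realised by $K_4-e$, $C_4$, $P_4$, $2K_2$) is shorter, handles all four values of $k$ at once, and generalises immediately to independence numbers $2^m$; I verified that $I_{H^2}=1+16z+20kz^2+8k^2z^3+k^3z^4$, that $P_H=\phi^{-1}\circ T_2\circ\phi$ so $\mathcal{J}(P_H\circ P_H)=\mathcal{J}(P_H)=[-\tfrac{4}{k},0]$, and that your observation $-1\in\mathcal{J}(P_G)$ does preclude $-1$ from being a super-attracting fixed point, whence $\mathcal{A}(G)=\mathcal{F}(G)$ by Theorem~\ref{Thm_attr1} (the paper reaches the same conclusion via Theorem~\ref{attractor=fractal}). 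What the paper's longer route buys is the full classification of disconnected examples with independence number four and lower bounds on their number. For the non-existence half your argument is essentially the paper's: a finite check of all factorisations of $1+16z+20kz^2+8k^2z^3+k^3z^4$ into independence polynomials of components; your shortcut of evaluating $I_G(-\tfrac{1}{n})$ for the admissible linear factors and the divisor analysis of $bb'=k^3$ for the $(2,2)$ split reproduces the content of Tables~\ref{3components-k=2}--\ref{3components-unequal-k=4} correctly. One caveat applies equally to both arguments: the non-existence claim is established only for graphs of independence number four (Proposition~\ref{disconnectedgraphs} is stated only in that case), and you are right to flag the arbitrary-$\alpha$ version as requiring the further analysis you sketch rather than claiming it.
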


 We have also shown  that $\mathcal{F}(G) =\mathcal{A}(G) $ whenever $\mathcal{A}(G)$ is a line segment (see Theorem~\ref{attractor=fractal}). 
 \par  
The structure of this article follows. Section \ref{ind-attractor-fractal} presents the connections between the independence attractor, independence fractal and the Julia set of the reduced independence polynomial of a graph. Theorem~\ref{nocircle} and Theorem~\ref{linesegment} are proved in Section \ref{circles-linesegments}. Section \ref{examples} contains the proof of Theorem~\ref{exampls-ind-no-4}.
\section{Independence attractors and fractals}
\label{ind-attractor-fractal}
The connections between the independence fractal, independence attractor and the Julia set of the reduced independence polynomial of a graph are already reported in \cite{brownetal2003} and  \cite{hickman2001}. In this section, we revisit these results, and  their proofs are reproduced with additional explanations.

To proceed, we begin with few definitions and results that can be found in   \cite{beardon}. A point $z$ is a periodic point of $f$ if for some positive integer $k,~ f^k(z) = z$, where the smallest such $k$ is called the period of $z$. If $k=1$, then $z$ is called a fixed point of $f$. A finite fixed point of $f$  is called attracting if $0\leq |f'(z)|<1$ (super-attracting if $f'(z)=0$); repelling if $|f'(z)|>1$; rationally indifferent if $f'(z)$ is a root of unity and irrationally indifferent if $|f'(z)|=1$, but $f'(z)$ is not a root of unity. Repelling and rationally indifferent fixed points lie in the Julia set while an attracting (in particular, a super-attracting) fixed point lies in the complement of the Julia set, which is commonly referred to as the Fatou set of $f$ and is denoted by $F(f)$. An irrationally indifferent fixed point may lie either in $F(f)$ or in $\mathcal{J}(f)$.
\par The Fatou set is open by definition. A maximally connected  subset of $F(f)$ is called a Fatou component.
A Fatou component $F_0$ of a polynomial $f$ is called $p-$periodic if $p$ is the smallest natural number such that $f^p (F_0) \subseteq F_0$. A $p-$periodic Fatou component is an attracting or parabolic domain if it contains an attracting or rationally indifferent $p-$periodic point in it or on its boundary respectively. An attracting domain is called super-attracting domain if the corresponding periodic point is super-attracting. The Fatou component $F_0$ is called  a Siegel disk if $f: F_0 \rightarrow F_0$ is analytically conjugate to a Euclidean rotation of the unit disk onto itself. It is a well-known fact that every periodic Fatou component of a polynomial is either an attracting domain, a parabolic domain or a Siegel disk.  
We require some useful results on the Fatou set of a polynomial.

\begin{theorem}[Theorem 3.2.5, \cite{beardon}]
\label{poly_map}
For every polynomial  with degree at least two, the point $\infty$ is a super-attracting fixed point and lies in its Fatou set. Consequently, the Julia set of every polynomial is bounded.
\end{theorem}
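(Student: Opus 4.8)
The plan is to pass to the Riemann sphere $\widehat{\mathbb{C}}=\mathbb{C}\cup\{\infty\}$, on which a polynomial $P$ of degree $n\ge 2$ extends to a holomorphic self-map with $P(\infty)=\infty$, and to analyse the local behaviour at $\infty$ by introducing the coordinate $w=1/z$ centred at $\infty$. Writing $P(z)=a_nz^n+a_{n-1}z^{n-1}+\cdots+a_0$ with $a_n\neq 0$, the map expressed in the $w$-coordinate is $\widetilde P(w)=1/P(1/w)$. First I would substitute and clear denominators to obtain
\[
\widetilde P(w)=\frac{w^{\,n}}{a_n+a_{n-1}w+\cdots+a_0w^{\,n}},
\]
which is holomorphic near $w=0$ (the denominator equals $a_n\neq 0$ there) and satisfies $\widetilde P(0)=0$; this confirms that $\infty$ is a fixed point of $P$.

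The next step is to read off the multiplier. Expanding near $w=0$ gives $\widetilde P(w)=\tfrac{1}{a_n}w^{\,n}\bigl(1+O(w)\bigr)$, so the Taylor series of $\widetilde P$ at $0$ begins with the monomial $w^{\,n}$. Since $n\ge 2$, the coefficients of $w^0$ and $w^1$ both vanish, and in particular $\widetilde P'(0)=0$. Thus $0$ is a super-attracting fixed point of $\widetilde P$, which is exactly the statement that $\infty$ is a super-attracting fixed point of $P$; being (super-)attracting, it lies in $F(P)$.

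For the consequence, I would exhibit an explicit attracting neighbourhood of $\infty$ in $\mathbb{C}$. With $M=\max_{0\le i\le n-1}|a_i|$, the triangle inequality yields, for $|z|\ge 1$,
\[
|P(z)|\ \ge\ |a_n|\,|z|^n-Mn\,|z|^{n-1}\ =\ |z|^{n-1}\bigl(|a_n|\,|z|-Mn\bigr).
\]
Choosing $R\ge 1$ large enough that $|a_n|R-Mn\ge 2$ forces $|P(z)|\ge 2|z|$ whenever $|z|\ge R$ (using $n\ge 2$, so $|z|^{n-2}\ge 1$). Iterating, $|P^k(z)|\ge 2^k|z|\to\infty$ for every $z$ with $|z|\ge R$, so on the open set $U=\{|z|>R\}\cup\{\infty\}$ the iterates $P^k$ converge locally uniformly to the constant $\infty$ in the spherical metric. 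Hence $\{P^k\}$ is normal on $U$, so $U\subseteq F(P)$ and therefore $\mathcal{J}(P)=\widehat{\mathbb{C}}\setminus F(P)\subseteq\{|z|\le R\}$, a bounded set.

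The only genuinely delicate point is justifying that local uniform convergence of the iterates to $\infty$ on $U$ places $U$ in the Fatou set; this is the observation that convergence to the constant $\infty$ on the sphere is itself a normal-family statement, combined with the fact that $\infty\in U$ realises the attracting fixed point. Everything else reduces to the coordinate change at $\infty$ and the elementary escape estimate above.
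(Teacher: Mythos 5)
The paper does not actually prove this statement: it is quoted verbatim as Theorem 3.2.5 of Beardon's book and used as a black box, so there is no in-paper proof to compare against. Your argument is a correct, self-contained proof along the standard textbook lines: the coordinate change $w=1/z$ exhibits $\infty$ as a fixed point with local form $w\mapsto \tfrac{1}{a_n}w^{n}\bigl(1+O(w)\bigr)$, hence super-attracting for $n\ge 2$, and the escape estimate $|P(z)|\ge 2|z|$ on $\{|z|\ge R\}$ (which also keeps the orbit inside that region, so the iteration is legitimate) gives uniform convergence of the iterates to $\infty$ on $\{|z|>R\}\cup\{\infty\}$, placing that set in $F(P)$ and confining $\mathcal{J}(P)$ to $\{|z|\le R\}$. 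The point you flag as delicate is handled correctly: convergence to the constant $\infty$ in the spherical metric is an admissible normal limit on $\widehat{\mathbb{C}}$, so normality on $U$ follows and nothing further is needed.
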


Each point in the Fatou set of a polynomial ultimately converges to a cycle of periodic points (attracting or rationally indifferent) or lands on a Siegel disk. We restate this well-known fact in a way suitable for our purpose. The forward orbit $\{z, f(z), f^2(z), \ldots\}$ of a point $z$ under $f$  is denoted by $O^+(z)$. 
For a point $z$, let $\omega(z)$ denote the set of all limit points of $O^{+}(z)$. 
\begin{theorem}[Theorem 7.1.2, \cite{beardon} ]
	If \(f\) is a polynomial with degree at least two, and \(z_0\in F(f)\), then $\omega(z_0)$ is a cycle of  periodic points (attracting or rationally indifferent), or is a periodic cycle of  Jordan curves contained in a cycle of Siegel disks.
	\label{Thm3}
\end{theorem}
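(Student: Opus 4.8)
The plan is to derive this statement from two central results of one-variable complex dynamics: Sullivan's No Wandering Domains Theorem and the classification of periodic Fatou components recalled just above. The first step is to reduce the question about the single point $z_0$ to a question about a periodic cycle of Fatou components. Since $z_0 \in F(f)$, it lies in some Fatou component $U_0$; by the No Wandering Domains Theorem this component is eventually periodic, so there exist a smallest $N \geq 0$ and a period $p \geq 1$ with $f^N(U_0) \subseteq U$ for some $p$-periodic component $U$. Because $\omega(z_0)$ is determined only by the tail of the forward orbit $O^{+}(z_0)$, we have $\omega(z_0) = \omega(f^N(z_0))$, and it therefore suffices to describe the accumulation set of an orbit lying in the periodic cycle $U, f(U), \dots, f^{p-1}(U)$ under the first-return map $f^p$.

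With this reduction in hand, I would invoke the classification stated above, namely that $U$ is an attracting domain, a parabolic domain, or a Siegel disk, and argue case by case. In the \emph{attracting} (including super-attracting) case, Koenigs' and B\"ottcher's linearization theorems show that every orbit in $U$ converges to the attracting $p$-periodic point $\zeta \in U$, so $\omega(z_0)$ is exactly the cycle $\{\zeta, f(\zeta), \dots, f^{p-1}(\zeta)\}$ of attracting periodic points. In the \emph{parabolic} case, the Leau--Fatou flower theorem forces every orbit in $U$ to converge to the rationally indifferent $p$-periodic point on $\partial U$, so $\omega(z_0)$ is the corresponding rationally indifferent cycle. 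In the \emph{Siegel disk} case, $f^p\colon U \to U$ is analytically conjugate by a map $\varphi$ to an irrational rotation $w \mapsto e^{2\pi i\theta} w$ of a round disk; the orbit of any $w \neq 0$ under this rotation is dense on the circle $\{|w| = |\varphi(f^N(z_0))|\}$, and transporting back by $\varphi^{-1}$ shows that the orbit accumulates on an analytic Jordan curve. Applying $f$ moves this curve successively through the $p$ disks of the Siegel cycle, yielding a periodic cycle of Jordan curves contained in the cycle of Siegel disks, exactly as claimed.

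I expect the main obstacle to be the reduction step itself, since the No Wandering Domains Theorem is by far the deepest ingredient and is precisely what guarantees that no orbit in the Fatou set behaves wildly by drifting through infinitely many distinct components. A secondary point requiring care is the classification of periodic components: although cited here as well known, it rests on hyperbolic-metric and Schwarz-lemma arguments (and, for polynomials, on the absence of Herman rings) to exclude any further possibilities. Finally, one should treat the degenerate sub-case in which $z_0$ is the centre of a Siegel disk --- an irrationally indifferent periodic point --- separately: there the limiting Jordan curve collapses to a single point and $\omega(z_0)$ reduces to that periodic cycle, which is the only way a Fatou orbit can accumulate on an irrationally indifferent cycle and is consistent with the stated dichotomy.
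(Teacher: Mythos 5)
This statement is quoted in the paper as a known background result (Theorem 7.1.2 of Beardon) and the paper supplies no proof of it, so there is no in-paper argument to compare yours against. Your outline is the standard and correct derivation: Sullivan's No Wandering Domains Theorem reduces to a periodic cycle of components, the classification of periodic Fatou components (with Herman rings excluded for polynomials) splits the analysis into the attracting, parabolic and Siegel cases, and each case is handled as you describe. Two small remarks: in the attracting and parabolic cases you do not actually need the Koenigs/B\"ottcher or Leau--Fatou normal forms, since locally uniform convergence of $f^{np}$ to the periodic point on the component is already built into the definition of an attracting, respectively parabolic, domain used in the classification; and your final observation about an orbit landing on the centre of a Siegel disk is a genuine (if degenerate) edge case, where $\omega(z_0)$ is a cycle of irrationally indifferent periodic points rather than one of the two alternatives literally listed in the statement --- worth flagging, as you do, but harmless for the way the theorem is applied in this paper (to the point $z_0=-1$, which Lemma~\ref{Siegel} keeps out of Siegel disks altogether).
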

 There are sets whose backward iterated images converge to the Julia set.
\begin{theorem}[Theorem 4.2.8, \cite{beardon}]
	  Let \(f\) be a polynomial with degree at least two and \(E\) be a compact subset of \(\mathbb{\widehat{C}}\) such that for any \(z \in F(f)\), \(\omega(z) \cap E =\emptyset\). Then for each open set \(U\) containing \(\mathcal{J}(f)\), \(f^{-m}(E)\subseteq U\)  for all sufficiently large \(m\).
	  \label{Thm1}
\end{theorem}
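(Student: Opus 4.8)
The plan is to argue by contradiction, converting the failure of the trapping conclusion into a sequence of points in the Fatou set whose forward orbits repeatedly meet $E$, and then to extract a contradiction from the normality of the iterate family $\{f^m\}_{m\ge 0}$ on $F(f)$. I would carry the whole argument out in $\widehat{\mathbb{C}}$ equipped with the spherical metric $d(\cdot,\cdot)$, so that the super-attracting fixed point $\infty$ (Theorem~\ref{poly_map}) needs no separate treatment.

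First I would set up the contradiction hypothesis. Suppose the conclusion fails for some open $U\supseteq \mathcal{J}(f)$. Then there is an increasing sequence $m_j\to\infty$ and points $w_j\in f^{-m_j}(E)$ with $w_j\notin U$. Put $K=\widehat{\mathbb{C}}\setminus U$; since $U$ is open and contains $\mathcal{J}(f)$, the set $K$ is a compact subset of $F(f)$, and each $w_j\in K$ satisfies $f^{m_j}(w_j)\in E$. By compactness of $K$, after passing to a subsequence I may assume $w_j\to\zeta$ for some $\zeta\in K\subseteq F(f)$.

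Next I would exploit normality at $\zeta$. Because $\zeta\in F(f)$, on which $\{f^m\}_{m\ge 0}$ is a normal family, I choose an open disk $D$ with $\zeta\in D\Subset F(f)$; the subfamily $\{f^{m_j}\}_j$ is again normal on $D$, so a further subsequence $f^{m_{j_l}}$ converges locally uniformly on $D$ (in the spherical metric) to a continuous limit $g\colon D\to\widehat{\mathbb{C}}$. Two consequences follow. Since $m_{j_l}\to\infty$, the values $f^{m_{j_l}}(\zeta)\to g(\zeta)$ form a subsequence of the forward orbit $O^{+}(\zeta)$ with times tending to infinity, whence $g(\zeta)\in\omega(\zeta)$. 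On the other hand, as $w_{j_l}\to\zeta$ and the convergence is uniform on a neighbourhood of $\zeta$ with $g$ continuous, the estimate $d\bigl(f^{m_{j_l}}(w_{j_l}),g(\zeta)\bigr)\le d\bigl(f^{m_{j_l}}(w_{j_l}),g(w_{j_l})\bigr)+d\bigl(g(w_{j_l}),g(\zeta)\bigr)$ gives $f^{m_{j_l}}(w_{j_l})\to g(\zeta)$; since each $f^{m_{j_l}}(w_{j_l})$ lies in the closed set $E$, the limit satisfies $g(\zeta)\in E$. Combining the two, $g(\zeta)\in\omega(\zeta)\cap E$, contradicting the hypothesis $\omega(z)\cap E=\emptyset$ for all $z\in F(f)$.

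The heart of the argument, and the step I expect to need the most care, is the passage to the locally uniform limit $g$ together with the interchange $f^{m_{j_l}}(w_{j_l})\to g(\zeta)$: one must keep the two subsequence extractions consistent (first $w_j\to\zeta$, then $f^{m_{j_l}}\to g$), check that $w_{j_l}\in D$ for all large $l$ so that $g(w_{j_l})$ is defined, and work throughout on the sphere so that the possibility $g\equiv\infty$ (i.e.\ $\zeta$ in the basin of $\infty$) is absorbed without a separate case. Everything else is a routine compactness and contradiction argument.
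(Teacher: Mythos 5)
The paper does not prove this statement at all: it is quoted verbatim from Beardon (Theorem 4.2.8 there) and used as a black box, so there is no in-paper proof to compare against. Your argument is correct and is the standard normal-families proof of this result: the negation correctly produces $m_j\to\infty$ and $w_j\in f^{-m_j}(E)\cap K$ with $K=\widehat{\mathbb{C}}\setminus U$ a compact subset of $F(f)$, and the two consistent subsequence extractions yield $g(\zeta)\in\omega(\zeta)\cap E$ for $\zeta\in F(f)$, contradicting the hypothesis; the delicate points (working on the sphere so that $\infty$ and a possible constant limit $g\equiv\infty$ need no special case, ensuring $w_{j_l}\in D$ for large $l$, and using closedness of the compact set $E$) are all correctly handled.
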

The backward orbit of $w$ is the set $O^-(w):=\{z: ~f^n(z) = w~\text{ for some } n\geq 0\} = \bigcup_{n\geq 0}f^{-n}(w)$. A point $w$ is said to be exceptional for $f$ if $O^{-}(w) \cup O^{+}(w)$  is finite. The following is usually referred to as the expansive property of the Julia set.
\begin{theorem}[Theorem 6.9.4, \cite{beardon}]
    Let \(f\) be a polynomial with degree at least two.
  If \(W\) is a domain that meets \(\mathcal{J}(f)\) and \(K\) is a compact set not containing any exceptional point of \(f\) then for all sufficiently large \(m\), \(f^{m}(W)\supset K\). In particular, this is true for $K=\mathcal{J}(f).$
   	\label{Juliaset-expanding}
 \end{theorem}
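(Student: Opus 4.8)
The plan is to prove this blowing-up (expansive) property by combining Montel's theorem on normal families with the structure of completely invariant finite sets, and then to upgrade the resulting covering statement into a uniform one by means of local expansion at a repelling periodic point. Throughout I regard $f$ as a degree $d\ge 2$ branched self-cover of $\widehat{\mathbb{C}}$; by Theorem~\ref{poly_map} the Julia set $\mathcal{J}(f)$ is bounded, hence compact, and I will use the standard facts that $\mathcal{J}(f)$ coincides with the non-normality locus of $\{f^n\}_{n\ge0}$ and is completely invariant, $f(\mathcal{J}(f))=\mathcal{J}(f)=f^{-1}(\mathcal{J}(f))$.

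First I would establish the non-uniform form, namely that $V:=\bigcup_{n\ge0}f^n(W)$ omits only exceptional points. Choose $z_0\in W\cap\mathcal{J}(f)$; since $z_0$ lies in $\mathcal{J}(f)$, the family $\{f^n\}_{n\ge0}$ fails to be normal on $W$. Writing $C=\widehat{\mathbb{C}}\setminus V$, Montel's three-value theorem forces $C$ to contain at most two points: if $C$ had three points, then $\{f^n\}$ would omit three values on $W$ and would therefore be normal, a contradiction.

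Next I would identify the finite omitted set $C$ with exceptional points. Since $V$ is forward invariant, $f(V)=\bigcup_{n\ge1}f^n(W)\subseteq V$, its complement $C$ is backward invariant, $f^{-1}(C)\subseteq C$. As $f$ is surjective on $\widehat{\mathbb{C}}$, the restriction $f\colon f^{-1}(C)\to C$ is onto, so $|f^{-1}(C)|\ge|C|$, while $f^{-1}(C)\subseteq C$ gives $|f^{-1}(C)|\le|C|$; hence $f^{-1}(C)=C$ and $C$ is completely invariant. Every $w\in C$ then has $O^+(w)\subseteq C$ and $O^-(w)\subseteq C$ both finite, i.e. $w$ is exceptional, so $V\supseteq\widehat{\mathbb{C}}\setminus E$ where $E$ is the (finite) exceptional set. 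Consequently, any compact $K$ with $K\cap E=\emptyset$ satisfies $K\subseteq V$, and since each $f^n(W)$ is open by the open mapping theorem, compactness gives $K\subseteq\bigcup_{n=0}^{N}f^n(W)$ for some finite $N$.

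The main obstacle is the uniformity step, namely passing from ``$K$ is covered by the union'' to ``$f^m(W)\supseteq K$ for every sufficiently large $m$.'' For this I would invoke the classical fact that repelling periodic points are dense in $\mathcal{J}(f)$, so $W$ contains a repelling periodic point $\zeta$ of some period $p$. In a linearizing coordinate $f^p$ expands at $\zeta$, so there is a disk $D\subseteq W$ about $\zeta$ with $D\subseteq f^p(D)$, which makes the open sets $f^{np}(D)$ increasing in $n$. Applying the non-uniform blow-up to $g=f^p$ (for which $\mathcal{J}(g)=\mathcal{J}(f)$ and $E_g=E$, while $D$ meets $\mathcal{J}(g)$) yields $\bigcup_n f^{np}(D)\supseteq\widehat{\mathbb{C}}\setminus E$; since these sets increase and are open, compactness of $K$ gives a single $n_0$ with $f^{n_0 p}(D)\supseteq K$, whence $f^{np}(W)\supseteq f^{np}(D)\supseteq K$ for all $n\ge n_0$. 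To capture the remaining residues modulo $p$, I would repeat the argument with the disks $f^{j}(D)$ about the repelling periodic points $f^{j}(\zeta)$ for $0\le j<p$, obtaining for each residue a threshold beyond which $f^{m}(W)\supseteq K$; taking the largest of these finitely many thresholds gives $f^m(W)\supseteq K$ for all sufficiently large $m$. The special case $K=\mathcal{J}(f)$ follows at once, since $\mathcal{J}(f)$ is compact and the exceptional points of a polynomial lie in the Fatou set, so $\mathcal{J}(f)\cap E=\emptyset$. I expect the delicate points to be the complete-invariance counting step for $C$ and the careful bookkeeping of residue classes in the uniformity argument.
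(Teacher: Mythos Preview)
The paper does not supply its own proof of this statement; it is quoted verbatim as Theorem~6.9.4 from Beardon and used as a black box. Your argument is essentially the classical proof one finds in Beardon (and in Milnor, Carleson--Gamelin, etc.): first Montel gives that $\bigcup_{n\ge0}f^n(W)$ omits at most two points, a counting argument shows the omitted set is completely invariant and hence exceptional, and then the density of repelling periodic points in $\mathcal{J}(f)$ is used to produce a nested sequence $f^{np}(D)$ that upgrades the union covering to a single-iterate covering.

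Two minor points worth tightening. First, the set $f^{j}(D)$ is not a round disk but merely an open neighbourhood of $f^{j}(\zeta)$; this is harmless since all you use is $f^{j}(D)\subseteq f^{p}(f^{j}(D))$, which follows by applying $f^{j}$ to $D\subseteq f^{p}(D)$, together with $f^{j}(D)\subseteq f^{j}(W)$, so that $f^{np+j}(W)\supseteq f^{np}(f^{j}(D))\supseteq K$ for large $n$. Second, you are invoking the density of repelling periodic points in $\mathcal{J}(f)$, which is itself a substantial theorem (Theorem~6.9.2 in Beardon, proved just before 6.9.4); since the paper treats all of this as background from \cite{beardon}, relying on it is appropriate, but it is worth flagging as the real engine of the uniformity step.
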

For a subset $A$ of the plane, $Cl(A)$ denotes the closure of $A$ in the plane.
    \begin{theorem}[Theorem 4.2.7, \cite{beardon}]
        For a polynomial \(f\) with  degree at least two, the Julia set $\mathcal{J}(f)$ is completely invariant. Further,
        \begin{enumerate}[label=(\roman{*})]
            \item if \(z_0\) is not an exceptional point, then \(\mathcal{J}(f) \subseteq Cl(O^-(z_0))\);
            \item if \(z_0 \in \mathcal{J}(f)\), then \(\mathcal{J}(f) = Cl(O^-(z_0))\).
        \end{enumerate}
        \label{Juliaset}
    \end{theorem}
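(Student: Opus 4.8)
The plan is to prove the three assertions in order, drawing both density statements out of the expansive property already recorded in Theorem~\ref{Juliaset-expanding}. First I would establish complete invariance directly from the filled--Julia--set description $\mathcal{J}(f)=\partial K(f)$. Since the orbit of $z$ is $z$ followed by the orbit of $f(z)$, boundedness of $O^+(z)$ is equivalent to boundedness of $O^+(f(z))$, so $z\in K(f)\iff f(z)\in K(f)$; together with the surjectivity of a nonconstant polynomial on $\mathbb{C}$ this yields $f^{-1}(K(f))=K(f)=f(K(f))$. To pass from $K(f)$ to its boundary I would use that $f$ is continuous and open: writing $K(f)^{c}=\mathbb{C}\setminus K(f)$, if $f(z)\in\partial K(f)$ then $z\in K(f)$ by invariance, and for every neighbourhood $U$ of $z$ the open set $f(U)$ meets $K(f)^{c}$, whence $U$ meets $f^{-1}(K(f)^{c})=K(f)^{c}$, so $z\in\partial K(f)$; the reverse inclusion is symmetric and uses continuity alone. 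This gives $f^{-1}(\mathcal{J}(f))=\mathcal{J}(f)$, and surjectivity upgrades it to full complete invariance.

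Granting complete invariance, the inclusion $Cl(O^-(z_0))\subseteq\mathcal{J}(f)$ for $z_0\in\mathcal{J}(f)$ is immediate: backward invariance forces every preimage of $z_0$ to lie in $\mathcal{J}(f)$, so $O^-(z_0)\subseteq\mathcal{J}(f)$, and $\mathcal{J}(f)$ is closed. For the opposite inclusion, that is, part~(i), I would fix a point $w\in\mathcal{J}(f)$ and an arbitrary open disc $V$ about $w$. Because $V$ is a domain meeting $\mathcal{J}(f)$ and $\{z_0\}$ is a compact set containing no exceptional point (as $z_0$ is assumed non-exceptional), Theorem~\ref{Juliaset-expanding} gives $f^m(V)\supseteq\{z_0\}$ for all sufficiently large $m$. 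Hence some $\zeta\in V$ satisfies $f^m(\zeta)=z_0$, so $\zeta\in O^-(z_0)\cap V$; as $V$ shrinks to $w$ this shows $w\in Cl(O^-(z_0))$, and therefore $\mathcal{J}(f)\subseteq Cl(O^-(z_0))$, proving (i).

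Finally, for (ii) I would observe that a point of $\mathcal{J}(f)$ is never exceptional: for a polynomial of degree at least two the only exceptional points are $\infty$ together with at most one finite point, each of which is a super-attracting fixed point and hence lies in the Fatou set by Theorem~\ref{poly_map}. Thus $z_0\in\mathcal{J}(f)$ is non-exceptional, part~(i) applies to give $\mathcal{J}(f)\subseteq Cl(O^-(z_0))$, and combining this with the inclusion from the previous paragraph yields $\mathcal{J}(f)=Cl(O^-(z_0))$.

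I anticipate the main obstacle to be the complete-invariance step rather than the density arguments: transferring invariance of $K(f)$ to its boundary $\mathcal{J}(f)$ rests on the openness of $f$ and some careful point-set bookkeeping, and the reduction used in (ii) depends on knowing that the finitely many exceptional points of a polynomial all sit in the Fatou set. Once these are in place, the expansive property does essentially all the work, and the density conclusions follow by a routine neighbourhood argument.
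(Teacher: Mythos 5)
The paper does not prove Theorem~\ref{Juliaset} at all: it is quoted verbatim from Beardon (Theorem 4.2.7) as background, so there is no in-paper argument to compare against. Your proposal is a correct, self-contained derivation modulo the other quoted facts, and it is worth recording how it is organized: you obtain complete invariance directly from the paper's definition of $\mathcal{J}(f)$ as $\partial K(f)$, using that boundedness of $O^{+}(z)$ and of $O^{+}(f(z))$ are equivalent and that a nonconstant polynomial is an open, continuous surjection; you then get part (i) by applying the expansive property (Theorem~\ref{Juliaset-expanding}) to the compact singleton $\{z_0\}$ and an arbitrary disc about a Julia point, and part (ii) by combining (i) with backward invariance and the observation that exceptional points of a polynomial lie in the Fatou set. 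This is the standard route and is not circular within the paper's framework, since Theorem~\ref{Juliaset-expanding} is itself taken as a black box whose proof in Beardon rests on Montel's theorem rather than on Theorem 4.2.7. Two small points you should make explicit: the step ``$f(z)\in\partial K(f)$ implies $z\in K(f)$'' uses that $K(f)$ is closed (equivalently, that the basin of $\infty$ is open), which deserves a sentence; and for the finite exceptional point the fact that a super-attracting \emph{finite} fixed point lies in the Fatou set comes from the paper's preliminary discussion of fixed-point types rather than from Theorem~\ref{poly_map}, which only concerns the point $\infty$. Neither affects correctness.
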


For $\epsilon >0$, an  $\epsilon$-neighbourhood of a set $A \subset \mathbb{C}$, denoted by  $[A]_{\epsilon}$, is defined as the set $\bigcup_{a\in A}\{z \in \mathbb{C}:~|z-a| < \epsilon\}$.  The Hausdorff distance $d_H (A,B)$ between two compact subsets $A$, $B$ of $\mathbb{C}$ is given by $d_H(A,B) = \text{inf}\{\epsilon: A\subset [B]_{\epsilon} \text{ and } B\subset [A]_{\epsilon}\}$ ( see Chapter 7, \cite{munkres}). Note that $[A]_{\epsilon_1} \subset [A]_{\epsilon_2}$ for all $\epsilon_1  < \epsilon_2$. Recall that, for a point $z_0$, $f^{-m}(z_0) =\{z: f^m (z)=z_0\}$. This is a compact subset of the plane. For certain points $z_0$, the set $f^{-m}(z_0)$ converges to the Julia set of $f$.
 
\begin{theorem}[Appendix A, \cite{brownetal2003}]
        Let \(f\) be a polynomial with degree at least two. If \(z_0\) is neither an attracting periodic point nor contained in any Siegel disk of $f$, then $ \lim_{m\rightarrow \infty}f^{-m}(z_0) = \mathcal{J}(f),$ where the limit is taken with respect to the Hausdorff metric.
        \label{Thm5}
    \end{theorem}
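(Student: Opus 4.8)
The plan is to establish the two inclusions that together define Hausdorff convergence: for every $\epsilon>0$ I will produce an $M$ such that for all $m\geq M$ both $f^{-m}(z_0)\subseteq[\mathcal{J}(f)]_\epsilon$ (no escape) and $\mathcal{J}(f)\subseteq[f^{-m}(z_0)]_\epsilon$ (filling) hold. Throughout I use that $\mathcal{J}(f)$ is non-empty and compact (bounded by Theorem~\ref{poly_map}, closed by definition) and that each $f^{-m}(z_0)$ is a non-empty finite set. One preliminary observation drives the filling step: the hypothesis forces $z_0$ to be non-exceptional. Indeed, the only finite exceptional point a polynomial of degree at least two can possess is a totally invariant super-attracting fixed point, which is in particular an attracting periodic point (of period one); since $z_0$ is assumed not to be an attracting periodic point, $z_0$ is not exceptional.

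For the filling inclusion I would invoke the expansive property, Theorem~\ref{Juliaset-expanding}. Fix $w\in\mathcal{J}(f)$ and $\delta>0$; the open ball $W=\{z:|z-w|<\delta\}$ is a domain meeting $\mathcal{J}(f)$, and $K=\{z_0\}$ is compact containing no exceptional point, so $f^m(W)\supseteq\{z_0\}$ for all large $m$, i.e. $f^{-m}(z_0)\cap W\neq\emptyset$. To upgrade this pointwise statement to a uniform one I use compactness: cover $\mathcal{J}(f)$ by finitely many balls $\{z:|z-w_j|<\epsilon/2\}$ with $w_j\in\mathcal{J}(f)$, $j=1,\dots,N$, apply the above to each $w_j$ with $\delta=\epsilon/2$ to obtain thresholds $M_1,\dots,M_N$, and set $M'=\max_j M_j$. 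For $m\geq M'$ each of these balls contains a point of $f^{-m}(z_0)$, whence every $w\in\mathcal{J}(f)$ lies within $\epsilon$ of $f^{-m}(z_0)$; this is the filling inclusion.

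For the no-escape inclusion I would split on the location of $z_0$. If $z_0\in\mathcal{J}(f)$, complete invariance of the Julia set (Theorem~\ref{Juliaset}) gives $f^{-m}(z_0)\subseteq\mathcal{J}(f)\subseteq[\mathcal{J}(f)]_\epsilon$ for every $m$, so the inclusion is immediate. If instead $z_0\in F(f)$, I would apply Theorem~\ref{Thm1} with $E=\{z_0\}$ and $U=[\mathcal{J}(f)]_\epsilon$. Its hypothesis, namely $\omega(z)\cap\{z_0\}=\emptyset$ for every $z\in F(f)$, is verified using Theorem~\ref{Thm3}: for $z\in F(f)$ the set $\omega(z)$ is an attracting cycle, a rationally indifferent cycle, or a union of Jordan curves inside a cycle of Siegel disks. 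Were $z_0\in\omega(z)$, the first alternative would make $z_0$ an attracting periodic point and the third would place $z_0$ in a Siegel disk, both excluded by hypothesis, while the second is impossible since rationally indifferent cycles lie in $\mathcal{J}(f)$ whereas here $z_0\in F(f)$. Hence $z_0\notin\omega(z)$ for all $z\in F(f)$, Theorem~\ref{Thm1} applies, and $f^{-m}(z_0)\subseteq[\mathcal{J}(f)]_\epsilon$ for all large $m$. Taking $M$ to be the larger of the thresholds from the two inclusions yields $d_H(f^{-m}(z_0),\mathcal{J}(f))<\epsilon$ for all $m\geq M$, and since $\epsilon$ is arbitrary the limit follows.

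The step I expect to be the genuine obstacle is the no-escape inclusion precisely when $z_0\in\mathcal{J}(f)$: there Theorem~\ref{Thm1} cannot be used, because a rationally indifferent (parabolic) periodic point lies in the Julia set and is the $\omega$-limit of nearby Fatou points, so the hypothesis $\omega(z)\cap\{z_0\}=\emptyset$ genuinely fails. Realizing that this apparent difficulty dissolves for free through complete invariance, rather than through any dynamical estimate, is the key point, and it is what forces the argument to branch on whether $z_0$ belongs to the Julia set. A secondary care-point is confirming that the ``not an attracting periodic point'' hypothesis is exactly what excludes $z_0$ from being exceptional, which is what legitimizes the use of the expansive property in the filling step.
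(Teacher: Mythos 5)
Your proposal is correct and follows essentially the same route as the paper: the same two Hausdorff inclusions, the same case split on $z_0\in\mathcal{J}(f)$ versus $z_0\in F(f)$ (complete invariance in the first case, Theorems~\ref{Thm3} and~\ref{Thm1} in the second), and the same finite $\tfrac{\epsilon}{2}$-ball cover combined with the expansive property and non-exceptionality of $z_0$ for the filling inclusion. Your verification that $\omega(z)\cap\{z_0\}=\emptyset$ is in fact spelled out in more detail than the paper's, which simply asserts it from Theorem~\ref{Thm3}.
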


 \begin{proof}
Let \(f\) be a polynomial and \(z_0\) be as given in the statement. We first show that  $ \lim_{m\rightarrow \infty}f^{-m}(z_0) = \mathcal{J}(f) $  is equivalent to the following statements: For every $\epsilon>0$,
        \begin{enumerate}[label=(\roman{*})]
            \item \(f^{-m}(z_0) \subseteq [\mathcal{J}(f)]_{\epsilon}\), and
            \item \(\mathcal{J}(f)\subseteq [f^{-m}(z_0)]_{\epsilon}\) for all sufficiently large \(m\).
        \end{enumerate}
        
       If \(\lim_{m\rightarrow \infty}f^{-m}(z_0) = \mathcal{J}(f)\) then for every \(\epsilon >0\),  \(d_H(f^{-m}(z_0),~\mathcal{J}(f)) < \epsilon\) for all sufficiently large $m$. For each such $m$, define \(A_m := \{r: f^{-m}(z_0)\subseteq [\mathcal{J}(f)]_r \text{ and } \mathcal{J}(f) \subseteq [f^{-m}(z_0)]_r\}\).  Then the infimum of the set \(A_m\), i.e., $d_H(f^{-m}(z_0),~\mathcal{J}(f)) $ is less than $\epsilon$ and so the set \(A_m\) has an element \(\tilde{\epsilon} \leq \epsilon\). By the definition of \(A_m\), \(f^{-m}(z_0)\subseteq [\mathcal{J}(f)]_{\tilde{\epsilon}} \text{ and } \mathcal{J}(f) \subseteq [f^{-m}(z_0)]_{\tilde{\epsilon}}\). Since \(\tilde{\epsilon} \leq \epsilon\), statements \ (i)  and  (ii)  follow.
        \par
        Conversely, let $\epsilon >0$. Then for  all sufficiently large $m$, \(f^{-m}(z_0) \subseteq [\mathcal{J}(f)]_{\frac{\epsilon}{2}}\) and \(\mathcal{J}(f)\subseteq [f^{-m}(z_0)]_{\frac{\epsilon}{2}}\),  i.e., $\frac{\epsilon}{2}\in A_m$  for all sufficiently large $m$. This implies that the inf $A_m \leq \frac{\epsilon}{2} < \epsilon$. In other words, $d_H(f^{-m}(z_0),~\mathcal{J}(f)) < \epsilon$   for all sufficiently large $m$. Consequently, $\lim_{m\rightarrow \infty}f^{-m}(z_0) = \mathcal{J}(f)$.
        
        Now in order to prove the statements  (i)  and  (ii), let $\epsilon >0$.
        
\underline{Proof of (i):} If \(z_0 \in \mathcal{J}(f)\) then the complete invariance of  the Julia set (see Theorem~\ref{Juliaset}) gives that
        $f^{-m}(z_0) \subseteq \mathcal{J}(f) \subset [\mathcal{J}(f)]_{\epsilon}, \text{ for all } m$.  Now, let \(z_0 \in F(f)\) be such that \(z_0\) is neither an attracting periodic point and nor contained in any Siegel disk. By Theorem \ref{Thm3}, for every $z$ in the Fatou set of $f$, \(\omega(z)\) is disjoint from $\{z_0\}$.  Since  \([\mathcal{J}(f)]_{\epsilon}\) is an open set containing \(\mathcal{J}(f)\), it follows from Theorem \ref{Thm1} that 
        \(f^{-m}(z_0) \subset [\mathcal{J}(f)]_\epsilon\) for all sufficiently large \(m\).

\underline{Proof of (ii):}  Since \(\mathcal{J}(f)\) is compact by Theorem~\ref{poly_map}, there are finitely many disks $B_i, i=1,2,,3,\cdots, k$, each with radius $\frac{\epsilon}{2}$ and center at some point of the Julia set such that $\mathcal{J}(f) \subset \cup_{i=1}^{k} B_i $. An exceptional point is either a  fixed  or a  two periodic super-attracting point (see Theorem 4.1.2,~\cite{beardon}). By the hypothesis of this theorem,  \(z_0\) is not  exceptional. Since each $B_i$ intersects the Julia set, it follows from Theorem \ref{Juliaset-expanding} that for all sufficiently large \(m\), $ z_0 \in  f^{m}(B_i) $, i.e., $B_i \cap f^{-m}(z_0)  \neq \emptyset$ for each $i$. Thus, there is an $N$ such that $B_i \cap f^{-m}(z_0)  \neq \emptyset$ for all $m>N$ and all $i$.
      Hence, for all   \(m > N\),
       $\bigcup_{i=1}^{k} B_i \subset [f^{-m}(z_0)]_{\epsilon}$. We are done since the Julia set is contained in  $\bigcup_{i=1}^{k} B_i$.
    \end{proof}
The point $0$ is always a repelling fixed point of the reduced independence polynomial of every graph. This along with Theorem~\ref{Thm5}  gives rise to the following useful result. 

\begin{theorem} [Theorem 3.3, \cite{brownetal2003}]
	For every graph $G$ different from $K_1$,  $\mathcal{F}(G) =\mathcal{J}(P_G)$.
	\label{indfractal=Juliaset}
\end{theorem}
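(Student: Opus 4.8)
The plan is to observe that \(\mathcal{F}(G)=\lim_{m\to\infty}P_G^{-m}(-1)\) is already in the exact shape \(\lim_{m\to\infty}f^{-m}(z_0)\) treated by Theorem~\ref{Thm5}, with \(f=P_G\) and \(z_0=-1\); so the entire task is to verify the hypotheses of that theorem and then invoke it. First I would dispose of the degree requirement. Writing \(P_G(z)=a_1 z+a_2 z^2+\cdots+a_\alpha z^\alpha\), the degree of \(P_G\) equals the independence number \(\alpha\). If \(G\) is not complete then \(\alpha\ge 2\) and Theorem~\ref{Thm5} is available; if \(G=K_n\) with \(n\ge 2\) then \(P_G(z)=nz\), for which a one-line computation gives \(P_G^{-m}(-1)=\{-n^{-m}\}\to\{0\}=\mathcal{J}(P_G)\), the single repelling fixed point. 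The case \(G=K_1\), where \(P_G\) is the identity, is exactly the one excluded in the statement.

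For the substantive case \(\alpha\ge 2\) I would first record the structural fact emphasised just before the statement: since \(G\neq K_1\) has at least two vertices, \(a_1\ge 2\), and \(P_G(0)=0\) with \(|P_G'(0)|=a_1>1\); hence \(0\) is a repelling fixed point and \(0\in\mathcal{J}(P_G)\). Because all \(a_i\) are positive, on \((0,\infty)\) one has \(P_G(x)>a_1 x>x\), so orbits there escape to \(+\infty\); consequently every cycle other than \(\{0\}\) avoids \([0,\infty)\). In particular, if \(-1\) is periodic its cycle is real (the forward orbit of the real point \(-1\)) and is confined to \((-\infty,0)\).

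It then remains to verify the two exclusions in Theorem~\ref{Thm5}: that \(-1\) is neither contained in a Siegel disk nor an attracting periodic point. The Siegel disk possibility I would rule out by reality: \(P_G\) has real coefficients, so its Fatou--Julia partition is symmetric under complex conjugation, and a Siegel disk meeting the real axis at the real point \(-1\) would have to be conjugation-invariant; but the irrational rotation to which \(P_G\) is conjugate on such a disk cannot preserve a real diameter, a contradiction.

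The attracting periodic case is where I expect the real work to lie. Using that the cycle of \(-1\) is real and confined to \((-\infty,0)\) while \(0\) is repelling, I would combine the monotone escape on \((0,\infty)\) with the real-interval dynamics to preclude an attracting cycle through \(-1\). The genuinely delicate sub-case is when \(-1\) is itself a fixed point, i.e.\ a root of \(I_G\): then \(P_G'(-1)=I_G'(-1)\) is an integer, so a multiplier of modulus \(<1\) forces \(P_G'(-1)=0\), meaning \(-1\) is a \emph{multiple} root of \(I_G\) and is super-attracting. This is precisely the degeneracy in which \(\mathcal{A}(G)\) and \(\mathcal{F}(G)\) part company, and it has to be read through the meaning of the independence fractal as the accumulation set of \(\bigcup_{m}P_G^{-m}(-1)\): Theorem~\ref{Juliaset}(i) gives \(\mathcal{J}(P_G)\subseteq Cl(O^-(-1))\) once \(-1\) is seen to be non-exceptional, while the isolated backward orbit of \(-1\) contributes no accumulation points outside \(\mathcal{J}(P_G)\). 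I expect controlling this super-attracting/multiple-root degeneracy (together with excluding higher-period cycles) to be the main obstacle; once it is settled, Theorem~\ref{Thm5} applies and yields \(\mathcal{F}(G)=\mathcal{J}(P_G)\).
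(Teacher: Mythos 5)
The paper does not actually prove this statement: it is imported from Brown et al., and the only justification offered is the sentence immediately preceding it, namely that $0$ is a repelling fixed point of $P_G$ for every $G\neq K_1$ (since $P_G(0)=0$ and $P_G'(0)=a_1\geq 2$), so that Theorem~\ref{Thm5} applies at the base point $0$ with no case analysis whatsoever. Your proposal instead runs Theorem~\ref{Thm5} at the base point $-1$, and this is where it breaks down. You correctly isolate the dangerous sub-case --- $-1$ a multiple root of $I_G$, hence a super-attracting fixed point of $P_G$ --- but you leave it unresolved, and in fact it cannot be resolved within your framework: when $-1$ is super-attracting, $\lim_{m\to\infty}P_G^{-m}(-1)$ is genuinely different from $\mathcal{J}(P_G)$. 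For $G=K_n^c$ one has $P_G(z)=(1+z)^n-1$, so $P_G^{-m}(-1)=\{-1\}$ for every $m$ and the Hausdorff limit is $\{-1\}$, whereas $\mathcal{J}(P_G)$ is the circle $\{|z+1|=1\}$; even your fallback reading of $\mathcal{F}(G)$ as the accumulation set of $\bigcup_{m}P_G^{-m}(-1)$ produces the empty set here. This is precisely the discrepancy the paper isolates in Theorem~\ref{Thm_attr1}(3) and Remark~\ref{circle-attractor}, so no amount of further work on the super-attracting case will make Theorem~\ref{Thm5} at $z_0=-1$ deliver the stated identity; the theorem has to be read with the base point $0$ (equivalently, with $\mathcal{F}(G)$ understood through the Julia set rather than through the literal formula $\lim_m P_G^{-m}(-1)$, which coincides with the definition of $\mathcal{A}(G)$).

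Two smaller points. The ``higher-period cycles through $-1$'' that you defer to real-interval dynamics are excluded cleanly by the Rational Root Theorem: the orbit of $-1$ consists of integers, any integer root of $P_G(z)+1$ must divide its constant term $1$, and since $P_G(1)+1\neq 0$ one gets $P_G^{p-1}(-1)=-1$ and, descending, $P_G(-1)=-1$; thus $-1$ periodic already forces $-1$ fixed (this is exactly the argument in the proof of Theorem~\ref{Thm_attr1}(1)), and an appeal to monotone escape on $(0,\infty)$ will not by itself rule out an attracting real cycle. Your Siegel-disk exclusion by conjugation symmetry can be made rigorous, but Lemma~\ref{Siegel} (the forward orbit of $-1$ stays in $\mathbb{Z}$ and so cannot be dense in a Jordan curve) is the cleaner route and is the one the paper uses. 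The repair for your whole argument is simply to apply Theorem~\ref{Thm5} at $z_0=0$: since $0$ is repelling it lies in $\mathcal{J}(P_G)$, is neither attracting nor in a Siegel disk, and the conclusion follows unconditionally --- which is what the paper's one-sentence justification intends.
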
 Note that $P_{K_1}(z)=z$ and $\mathcal{J}(P_{K_1})=\widehat{\mathbb{C}}$ whereas $\mathcal{F}(K_1) =\{0\}$.
We require two results to prove the relation between $\mathcal{F}(G)$ and $\mathcal{A}(G)$.
 \begin{theorem}(Rational Root Theorem)
        Let $f(z)=a_0 + a_1z +a_2z^2 + \cdots + a_nz^n$ be a polynomial with integer coefficients. If $a_0, a_n \neq 0$ then for each rational root $\frac{p}{q}$ of $f$ with $\gcd(p,q)=1$,  $p$ divides $a_0$ and $q$ divides $a_n$.
\end{theorem}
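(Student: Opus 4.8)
The plan is to clear denominators and then read off the two divisibility statements from a single integer identity. First I would substitute the claimed rational root $z=\frac{p}{q}$ into $f$ and use $f\!\left(\frac{p}{q}\right)=0$ to write
\begin{equation*}
a_0 + a_1\frac{p}{q} + a_2\frac{p^2}{q^2} + \cdots + a_n\frac{p^n}{q^n} = 0.
\end{equation*}
Multiplying through by $q^n$ yields the integer identity
\begin{equation*}
a_0 q^n + a_1 p\,q^{n-1} + a_2 p^2 q^{n-2} + \cdots + a_{n-1}p^{n-1}q + a_n p^n = 0,
\end{equation*}
which is the single relation from which both conclusions will follow.

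To obtain $p \mid a_0$, I would move every term except the first to the right-hand side and factor out $p$, giving
\begin{equation*}
a_0 q^n = -p\left(a_1 q^{n-1} + a_2 p\,q^{n-2} + \cdots + a_n p^{n-1}\right).
\end{equation*}
The right-hand side is an integer multiple of $p$, so $p \mid a_0 q^n$. The symmetric argument for $q \mid a_n$ isolates the last term instead,
\begin{equation*}
a_n p^n = -q\left(a_0 q^{n-1} + a_1 p\,q^{n-2} + \cdots + a_{n-1}p^{n-1}\right),
\end{equation*}
so that $q \mid a_n p^n$.

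The only substantive point, and hence the main (if mild) obstacle, is passing from $p \mid a_0 q^n$ to $p \mid a_0$, and likewise from $q \mid a_n p^n$ to $q \mid a_n$. Here I would invoke the hypothesis $\gcd(p,q)=1$, which forces $\gcd(p,q^n)=1$ and $\gcd(q,p^n)=1$ because coprimality is preserved under taking powers (a consequence of Euclid's lemma). Applying the standard fact that $p \mid a_0 q^n$ together with $\gcd(p,q^n)=1$ implies $p \mid a_0$, and symmetrically for $q$, completes the argument. The hypotheses $a_0,a_n\neq 0$ are precisely what guarantee that these are genuinely the extreme coefficients, so that the divisibility statements are meaningful.
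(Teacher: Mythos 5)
Your proof is correct and is the standard argument for the Rational Root Theorem: clear denominators to get an integer identity, isolate the first and last terms to obtain $p \mid a_0 q^n$ and $q \mid a_n p^n$, and then use $\gcd(p,q)=1$ together with Euclid's lemma to strip off the powers of $q$ and $p$. The paper itself states this theorem without proof (it is invoked as a known result in the proof of Theorem 2.5), so there is nothing to compare against; your argument fills that gap correctly.
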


\begin{lemma}
     Let $f$ be a polynomial with degree at least two and with positive integer coefficients. If there is a Siegel disk $D$ of $f$ then $D$ is disjoint from $f^{-m}(\mathbb{Z}[i])$ for each non-negative integer  $m$ where $\mathbb{Z}[i] =\{a+ib: a,b \in \mathbb{Z}\}$.
    \label{Siegel}
\end{lemma}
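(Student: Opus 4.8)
The plan is to reduce the statement to a single assertion: no Siegel disk of $f$ contains a Gaussian integer. The starting observation is that, since $f$ has integer coefficients and $\mathbb{Z}[i]$ is a ring, $f(\mathbb{Z}[i]) \subseteq \mathbb{Z}[i]$; consequently $\mathbb{Z}[i] \subseteq f^{-1}(\mathbb{Z}[i]) \subseteq f^{-2}(\mathbb{Z}[i]) \subseteq \cdots$. Suppose, for contradiction, that some $z \in D \cap f^{-m}(\mathbb{Z}[i])$, so that $w := f^m(z) \in \mathbb{Z}[i]$. Because $D$ lies in the Fatou set and a polynomial maps each Fatou component onto a Fatou component, $f^m(D)$ is again a Fatou component; being the forward image of a Siegel disk, it is one of the disks in the periodic cycle of Siegel disks to which $D$ belongs. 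Hence $w$ is a Gaussian integer lying in a Siegel disk, and it suffices to rule this out.

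First I would exploit boundedness. For a polynomial of degree at least two the basin of $\infty$ is the only unbounded Fatou component (Theorem~\ref{poly_map}), so every Siegel disk, and hence the whole finite cycle of Siegel disks containing $w$, is bounded. The forward orbit $O^+(w)$ stays in $\mathbb{Z}[i]$ (by $f(\mathbb{Z}[i]) \subseteq \mathbb{Z}[i]$) and also stays within this bounded cycle. Since a bounded subset of the plane contains only finitely many Gaussian integers, $O^+(w)$ is finite, so $w$ is eventually periodic. But if $D'$ is the $p$-periodic Siegel disk containing $w$, the first-return map $f^p$ is analytically conjugate on $D'$ to an irrational rotation, whose only point with a finite orbit is the fixed centre. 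Therefore $w$ must be the centre of $D'$, i.e.\ an irrationally indifferent periodic point of period $p$.

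The remaining, and main, step is to contradict irrational indifference by a multiplier computation. Writing $\lambda = (f^p)'(w)$, the chain rule gives $\lambda = \prod_{j=0}^{p-1} f'\big(f^j(w)\big)$. Each $f^j(w)$ is a Gaussian integer and $f'$ has integer coefficients, so every factor lies in $\mathbb{Z}[i]$ and hence $\lambda \in \mathbb{Z}[i]$. Since $w$ is irrationally indifferent we have $|\lambda| = 1$; but the only Gaussian integers of modulus one are $1, -1, i, -i$, each a root of unity. This forces $\lambda$ to be a root of unity, contradicting the defining property of an irrationally indifferent point. The contradiction proves that $D \cap f^{-m}(\mathbb{Z}[i]) = \emptyset$ for every $m$.

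I expect the centre case to be the only real obstacle: non-central Gaussian integers are dispatched immediately by the ``finitely many lattice points in a bounded set'' argument, whereas the centre requires the observation that its multiplier is a Gaussian integer of modulus one and is therefore a root of unity. Note also that only integrality of the coefficients is actually used; the positivity hypothesis is not needed for this lemma.
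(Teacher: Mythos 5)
Your proof is correct, and it follows the same guiding idea as the paper's --- namely that $f(\mathbb{Z}[i])\subseteq\mathbb{Z}[i]$, so the tail of the orbit of any point of $D\cap f^{-m}(\mathbb{Z}[i])$ lives in the discrete set $\mathbb{Z}[i]$, which is incompatible with Siegel-disk dynamics --- but you finish the argument differently and, in fact, more completely. The paper's proof is two lines: the forward orbit of such a point cannot be dense in a Jordan curve because it eventually lies in $\mathbb{Z}[i]$, ``however this must be the case as $z$ is in a Siegel disk.'' That last claim is only true for points of the Siegel disk other than its centre (for the centre, $\omega(z)$ is the cycle of periodic points, not a cycle of Jordan curves, exactly as in Theorem~\ref{Thm3}), so the paper's argument silently omits the case where the Gaussian-integer point in question is the centre. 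Your route replaces ``dense in a Jordan curve versus discrete'' by ``bounded plus discrete implies a finite, hence eventually periodic, orbit, which under an irrational rotation forces the point to be the centre,'' and then disposes of the centre by the multiplier computation: $(f^p)'(w)=\prod_{j=0}^{p-1}f'(f^j(w))$ is a Gaussian integer of modulus one, hence one of $\pm1,\pm i$, hence a root of unity, contradicting irrational indifference. That extra step is exactly what is needed to close the gap in the published argument, and your observation that only integrality (not positivity) of the coefficients is used is also accurate. The only stylistic caveat is that the paper defines a Siegel disk via an invariant component, so when you pass to $f^m(D)$ and to the first-return map $f^p$ you are implicitly using the standard fact that every component in the cycle of a Siegel disk is again a Siegel disk for the return map; this is harmless but worth a sentence.
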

\begin{proof}
If there is a point $z \in D \cap f^{-k}( \mathbb{Z}[i])$ for some non-negative integer $k$ then $f^m (z) \in \mathbb{Z}[i]$ for all $ m \geq k$. Since 	$\mathbb{Z}[i]$ is forward invariant under $f$, the forward orbit of $z$ cannot be dense in any Jordan curve. However, this must be the case as $z$ is in a Siegel disk.
\end{proof}
Let $f$ be an analytic function. We say that a point $z_0$ is a root of $f$ with multiplicity $k$ if there is an analytic function $g$ such that 
$g(z_0) \neq 0$ and $f(z) = (z-z_0)^k g(z)$. It is a simple root of $f$ if $k=1$ and a multiple root if $k>1$. Further, if the multiplicity of $z_0$ as a root of $f$ is $k$, then $f(z_0)=0, f'(z_0)=0, \ldots, f^{(k-1)}(z_0)=0$ and $f^{(k)}(z_0)\neq 0$.

\begin{theorem}[Theorem 3.2.14, \cite{hickman2001}]
For every non-empty graph \(G\), \(\mathcal{F}(G) \subseteq \mathcal{A}(G)\). More precisely, the following are true.
\begin{enumerate}
\item If the point $-1$ is not a root of $I_G$, then \(\mathcal{A}(G)=\mathcal{F}(G)\).
\item If the point $-1$ is a simple root of $I_G$, then $\mathcal{A}(G)=\mathcal{F}(G)$. Further, $ \mathcal{F}(G)= Cl~\left(\bigcup_{m\geq 1}P_G^{-m}(-1)\right)$.
\item If the point $-1$ is a multiple root of $I_G$, then $\mathcal{A}(G) $ is a disjoint union of $\mathcal{F}(G) $ and $ (\cup_{m\geq 1}P_G^{-m}(-1))  $. Further, $\mathcal{F}(G)$ is the limit set of $\cup_{m\geq 1}P_G^{-m}(-1)$.
\end{enumerate}
\label{Thm_attr1}
    \end{theorem}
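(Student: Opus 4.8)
The plan is to translate everything into the dynamics of $P_G$ and to locate the single point $-1$ relative to the Fatou and Julia sets of $P_G$, using that $P_G=I_G-1$ has positive integer coefficients and therefore maps $\mathbb{Z}$ into $\mathbb{Z}$. The governing observation is that $-1$ is a root of $I_G$ exactly when $P_G(-1)=-1$, i.e. exactly when $-1$ is a fixed point of $P_G$, and that in that case the multiplier is $P_G'(-1)=I_G'(-1)$. Throughout I would use $\mathcal{F}(G)=\mathcal{J}(P_G)$ from Theorem~\ref{indfractal=Juliaset} and take $\deg P_G=\alpha\geq 2$ (the complete-graph case $\alpha=1$ gives the linear map already treated in the Introduction). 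Two facts get used repeatedly: the forward orbit $O^{+}(-1)$ lies in $\mathbb{Z}$, so any periodic cycle through $-1$ consists of integers and has integer multiplier $\prod_j P_G'(c_j)$; and, by Lemma~\ref{Siegel} with $m=0$, every Siegel disk of $P_G$ is disjoint from $\mathbb{Z}[i]\supseteq O^{+}(-1)$, so $-1$ is never contained in a Siegel disk. The latter disposes of the Siegel hypothesis of Theorem~\ref{Thm5} in every case.

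For part (1), $-1$ is not a fixed point, so I must verify the remaining hypothesis of Theorem~\ref{Thm5}, that $-1$ is not an attracting periodic point; then $\mathcal{A}(G)=\lim_m P_G^{-m}(-1)=\mathcal{J}(P_G)=\mathcal{F}(G)$ follows immediately. Suppose $-1$ were attracting of period $p\geq 2$. Its cycle lies in $\mathbb{Z}$, and since $P_G(x)>x$ for every real $x>0$ (positivity of the coefficients) while $0$ is fixed, the cycle must lie in $\mathbb{Z}_{<0}$. Its multiplier is an integer of modulus $<1$, hence $0$, so the cycle is super-attracting and contains an integer critical point $c$ of $P_G$; by the Rational Root Theorem applied to $P_G'$, $c$ is a negative divisor of $a_1$. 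I expect this to be the \textbf{main obstacle}: ruling out that the forward orbit of $-1$ closes up into such a super-attracting integer cycle. The reduction above is clean, but the final exclusion is delicate and is exactly where the arithmetic hypotheses (integer coefficients, the Rational Root Theorem pinning the critical point, and the restriction to negative integers) must be combined, presumably with a direct examination of the orbit of $-1$.

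For parts (2) and (3), $-1$ is a fixed point and the dichotomy is decided entirely by $P_G'(-1)=I_G'(-1)$. If $-1$ is a simple root then $I_G'(-1)$ is a nonzero integer, so $|P_G'(-1)|\geq 1$ and $-1$ is repelling or rationally indifferent; in particular $-1\in\mathcal{J}(P_G)$ and is not attracting, so Theorem~\ref{Thm5} again gives $\mathcal{A}(G)=\mathcal{F}(G)$. For the refinement I note that $-1$, not being super-attracting, is not exceptional, whence Theorem~\ref{Juliaset}(ii) yields $\mathcal{F}(G)=\mathcal{J}(P_G)=Cl(O^{-}(-1))$; as $-1$ is fixed it lies in $P_G^{-1}(-1)$, so $O^{-}(-1)=\bigcup_{m\geq 1}P_G^{-m}(-1)$ and the stated formula follows. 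If instead $-1$ is a multiple root then $I_G'(-1)=0$, so $-1$ is a super-attracting fixed point in $F(P_G)$; complete invariance of the Fatou set gives $\bigcup_{m\geq 1}P_G^{-m}(-1)\subseteq F(P_G)$, disjoint from $\mathcal{F}(G)=\mathcal{J}(P_G)$, which is the disjointness asserted in part (3).

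It remains, in the multiple-root case, to identify the Hausdorff limit; this is the secondary obstacle. Because $G$ is non-empty, $I_G\neq(1+z)^{\alpha}$, so $-1$ is not totally invariant and hence not exceptional; Theorem~\ref{Juliaset}(i) then gives $\mathcal{J}(P_G)\subseteq Cl\left(\bigcup_m P_G^{-m}(-1)\right)$, while normality of $\{P_G^m\}$ on $F(P_G)$ prevents the backward orbit from accumulating inside the Fatou set, so the accumulation set of $\bigcup_m P_G^{-m}(-1)$ is exactly $\mathcal{F}(G)$. Writing $S:=Cl\left(\bigcup_{k}P_G^{-k}(-1)\right)=\left(\bigcup_k P_G^{-k}(-1)\right)\sqcup\mathcal{F}(G)$, each $P_G^{-m}(-1)$ sits inside the compact set $S$, giving one Hausdorff inclusion; for the reverse I would use that $-1$ is fixed, so every $w\in P_G^{-k}(-1)$ satisfies $w\in P_G^{-m}(-1)$ for all $m\geq k$, together with the covering estimate $\mathcal{F}(G)\subseteq[P_G^{-m}(-1)]_{\epsilon}$ for large $m$ (this is precisely part (ii) of the proof of Theorem~\ref{Thm5}, which used only non-exceptionality of the base point). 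Combining these shows $S\subseteq[P_G^{-m}(-1)]_{\epsilon}$ for large $m$, so $\mathcal{A}(G)=S=\mathcal{F}(G)\sqcup\left(\bigcup_m P_G^{-m}(-1)\right)$ with $\mathcal{F}(G)$ the limit set of the backward orbit. In each case $\mathcal{F}(G)\subseteq\mathcal{A}(G)$ is then immediate, settling the first assertion as well.
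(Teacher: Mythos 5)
Your reduction of part (1) is where the proof breaks down, and you flag it yourself: after correctly observing that an attracting cycle through $-1$ would consist of negative integers and would have integer multiplier, hence multiplier $0$, hence be super-attracting and contain an integral critical point, you leave the exclusion of this configuration as an unresolved ``main obstacle.'' That exclusion is precisely the content of part (1), so the proposal is incomplete at its central step. Moreover, the route you chose (analyzing the multiplier and applying the Rational Root Theorem to $P_G'$ to locate the critical point) is a detour: the paper's argument never mentions multipliers or critical points. It shows the much stronger statement that $-1$ is not \emph{periodic} at all: if $P_G^{p}(-1)=-1$ with $p\geq 2$, then $w:=P_G^{p-1}(-1)$ is an integer root of $P_G(z)+1$, whose constant term is $1$; the Rational Root Theorem applied to $P_G(z)+1$ (not to $P_G'$) forces $w\mid 1$, and since $P_G(1)+1>0$ by positivity of the coefficients, $w=-1$. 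Descending through the orbit gives $P_G(-1)=-1$, contradicting the hypothesis that $-1$ is not a root of $I_G$. This one observation is the missing idea; without it (or a substitute) part (1) is not proved.

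A secondary, repairable weakness is in part (3): you justify that the accumulation set of $\bigcup_m P_G^{-m}(-1)$ lies in $\mathcal{J}(P_G)$ by appealing to ``normality on $F(P_G)$,'' but Theorem~\ref{Thm1} is not applicable here because $-1$ belongs to $\omega(z)$ for every $z$ in its own super-attracting basin, so the backward orbit of $-1$ is \emph{not} eventually contained in a neighbourhood of the Julia set ($-1\in P_G^{-m}(-1)$ for every $m$), and a bare normality claim does not settle what happens elsewhere in the Fatou set. The paper sidesteps this by choosing some $z\neq -1$ with $P_G(z)=-1$ (such $z$ exists because $G$ is non-empty, so $I_G\neq(1+z)^{\alpha}$), noting that $z$ is neither attracting periodic nor in a Siegel disk, and applying Theorem~\ref{Thm5} to $z$; this identifies the limit set directly. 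Your Hausdorff-metric bookkeeping for $\mathcal{A}(G)=Cl\left(\bigcup_{m\geq 1}P_G^{-m}(-1)\right)$ and your treatment of part (2) agree with the paper and are fine.
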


    \begin{proof}
        If \(G\) has independence number one, then \(G=K_n\)  for some \(n\geq 2\) and \(G^m = K_{n^m}\) for all \(m\geq 1\). Hence \(I_{G^m}(z) = 1 + n^mz\). For each \(m\), the only root of \(I_{G^m}(z)\) is \(-\frac{1}{n^m}\). Thus, $ \lim_{m \rightarrow \infty} \text{Roots }(I_{G_m}) = \{0\}$. This gives $\mathcal{A}(G) = \{0\}$.
        Further, \(P_G(z) = nz\). The point \(z=0\) is a repelling fixed point of \(P_G\) and therefore it lies in \(\mathcal{J}(P_G)\). For \(z\neq 0\), the forward orbit of \(z\) converges to \(\infty\). Therefore, \(\mathcal{J}(P_G) = \{0\}\). By Theorem \ref{indfractal=Juliaset}, \(\mathcal{F}(G) = \{0\}\). This implies \(\mathcal{A}(G) = \mathcal{F}(G)\) whenever the independence number of $G$ is one.
        \par
        Let \(G\) be a graph with independence number at least two.
        
           \begin{enumerate}
           	\item If \(-1\) is not a root of \(I_G(z)\), then it is not a fixed point of \(P_G(z)\). If  \(P_G^{p}(-1) = -1\) for some \(p\geq 2\) then \(P_G(P_G^{p-1}(-1)) + 1 = 0\). Since \(P_G^{p-1}(-1) \in \mathbb{Z}\) is a rational root of $P_G (z)+1=0$, by the Rational Root Theorem, $P_G^{p-1}(-1)$ divides the constant term of  \(P_G(z)+1 \), i.e., $1$. This leads to only two possibilities: \(P_G^{p-1}(-1) = 1\) or \(-1\). But \(P_G(1)+1 \neq 0\) and, we have  \(P_G^{p-1}(-1) = -1\). Repeating the argument, it can be eventually  found that \(P_G(-1)=-1\),  which is a contradiction to our initial assumption. In particular, \(-1\) does not lie in any attracting cycle of \(P_G\). Also, by Lemma \ref{Siegel}, \(-1\) is not in any Siegel disk of \(P_G\). Therefore, by  Theorem \ref{Thm5}, $\mathcal{J}(P_G) =\lim_{m \to \infty} P_G ^{-m}(-1)$.   Theorem \ref{indfractal=Juliaset} gives that \(\mathcal{A}(G)=\mathcal{F}(G)\).

            \item  If \(-1\) is a simple root of \(I_G(z)\), then it is a fixed point of \(P_G(z)\) and \(P_G'(-1) \neq 0\). Since \(P_G'\) has integer coefficients, \(P_G'(-1) \in \mathbb{Z}\) and \(|P_G'(-1)| \geq 1\). This implies that \(-1\) is a repelling or a rationally indifferent fixed point of \(P_G\) and therefore lies in \(\mathcal{J}(P_G)\).  By Theorem \ref{Thm5}, $\mathcal{J}(P_G) =\lim_{m \to \infty} P_G ^{-m}(-1)$. Now it follows from  Theorem \ref{indfractal=Juliaset} that   \(\mathcal{A}(G) =\mathcal{F}(G) \).
            \par
            As \(-1 \in \mathcal{J}(P_G)\), $\mathcal{J}(P_G) = Cl~\left(\bigcup_{m\geq 1}P_G^{-m}(-1)\right)$  by Theorem 2.5 (ii). Hence, \(\mathcal{F}(G) = Cl~\left(\bigcup_{m\geq 1}P_G^{-m}(-1)\right).\)

            \item If \(-1\) is a multiple root of \(I_G(z)\), then it is a super-attracting fixed point of \(P_G(z)\). To show that \(\mathcal{A}(G) = Cl~\left(\bigcup_{m\geq 1}P_G^{-m}(-1)\right)\),  let \(\epsilon > 0\). Then it is equivalent to prove the following two statements, for all sufficiently large \(n\):
            \begin{enumerate}[label=(\alph{*})]
                \item \(P_G^{-n}(-1) \subseteq [Cl~\left(\bigcup_{m\geq 1}P_G^{-m}(-1)\right)]_ \epsilon\), and 
                \item \(Cl~\left(\bigcup_{m\geq 1}P_G^{-m}(-1)\right) \subseteq [P_G^{-n}(-1)]_ \epsilon\).
            \end{enumerate}
            \par
            Part (a) is obvious as \(P_G^{-n}(-1) \subseteq \bigcup_{m\geq 1}P_G^{-m}(-1) \) for all \(n>0\). For showing part (b), note that  \(P_G^{-(m-1)}(-1) \subseteq P_G^{-m}(-1)\) for all \(m\geq 2\). Consider \(  B_m = [P_G^{-m}(-1)]_\epsilon \). Then $\bigcup_{m \geq 1}P_G^{-m}(-1) \subset \bigcup_{m 
            \geq 1} B_m$.  If \(z\) is a limit point of \(\bigcup_{m\geq 1}P_G^{-m}(-1)\) then the open ball \(B(z, \frac{\epsilon}{2})\) contains a point \(z'\) such that \(z' \in P_G^{-k}(-1)\) for some natural number \(k\). Therefore, \(z \in [P_G^{-k}(-1)]_\epsilon \subseteq \bigcup_{m \geq 1}B_m\). This gives that $\{B_m : m \geq 1\}$ is an open cover of    \(Cl\left(\bigcup_{m\geq 1}P_G^{-m}(-1)\right) \), which is clearly a compact subset of the plane.  Thus, there are finitely many $B_m$'s whose union contains \(Cl\left(\bigcup_{m\geq 1}P_G^{-m}(-1)\right) \). Since $B_m  \subset B_{m+1}$ for each $m$,  we have
            $ Cl\left(\bigcup_{m\geq 1}P_G^{-m}(-1)\right) \subseteq B_n, \text{ for all sufficiently large} ~ n$.
            \par It remains to be shown that $\mathcal{F}(G)$ is the limit set of  $\bigcup_{m\geq 1}P_G^{-m}(-1)$. If $z \neq -1$ and $P_G (z)=-1$ then it follows from Theorem~\ref{Thm5} that $\mathcal{J} (P_G) =\lim_{m \to \infty} P_G ^{-m}(z)$. This gives that the limit set of $\bigcup_{m\geq 1}P_G^{-m}(-1)$ is exactly equal to  $\mathcal{J} (P_G)$. Clearly, $\mathcal{J} (P_G)$ is disjoint from $\bigcup_{m\geq 1}P_G^{-m}(-1)$.
            \end{enumerate}
    \end{proof}

\section{Circles and line segments}
\label{circles-linesegments}
The following lemma is to be applied to reduced independence polynomials.
\begin{lemma}
    Let \(P\) be a polynomial with degree at least two and with positive coefficients such that $P(0) = 0$ and $P'(0) > 1$. Then its Julia set is symmetric with respect to the real line, contains the point \(0\) and does not intersect the positive real line.
     \label{real_points}
 \end{lemma}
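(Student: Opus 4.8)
The plan is to treat the three assertions separately, since each follows from a distinct elementary observation about the dynamics of $P$.

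For the symmetry, I would exploit that $P$ has real (indeed positive) coefficients, so that $P(\bar z) = \overline{P(z)}$, and hence by induction $P^n(\bar z) = \overline{P^n(z)}$ for every $n$. Consequently $\{P^n(z)\}_{n>0}$ is bounded if and only if $\{P^n(\bar z)\}_{n>0}$ is bounded, which shows that the filled Julia set $K(P)$ is invariant under complex conjugation. Since $\mathcal{J}(P) = \partial K(P)$, the Julia set is symmetric with respect to the real line. For the containment of the origin, I would simply note that $P(0)=0$ makes $0$ a fixed point, while the hypothesis $P'(0)>1$ gives $|P'(0)|>1$, so that $0$ is a repelling fixed point; as recalled in the preliminaries, repelling fixed points lie in the Julia set, whence $0\in\mathcal{J}(P)$.

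The only part carrying any real content is that $\mathcal{J}(P)$ misses the open positive real axis, and here the key step is a one-line estimate. Writing $P(z)=a_1 z + a_2 z^2 + \cdots + a_n z^n$ with every $a_i>0$ and $a_1 = P'(0) > 1$, for any $x>0$ each term $a_i x^i$ is positive, so $P(x) > a_1 x > x$. Thus $P$ maps the positive real axis into itself, and the orbit $\{P^m(x)\}_{m\geq 0}$ is a strictly increasing sequence of positive reals. If this orbit were bounded it would converge to a finite limit $L>0$, which would have to satisfy $P(L)=L$ by continuity; but the same estimate gives $P(L)>L$, a contradiction. Hence $P^m(x)\to\infty$, so $x\notin K(P)$. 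Since $K(P)$ is closed and $\mathcal{J}(P)=\partial K(P)\subseteq K(P)$, we conclude $x\notin\mathcal{J}(P)$ for every $x>0$.

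The main, and essentially only, obstacle is making the escape-to-infinity argument watertight: one must confirm both that the forward orbit remains on the positive real axis (guaranteed by positivity of the coefficients together with $P(0)=0$) and that strict monotonicity, combined with the absence of a positive fixed point, forces divergence to $\infty$ rather than convergence to a finite value. Everything else is immediate from the recalled facts that repelling fixed points lie in the Julia set and that conjugation symmetry of the filled Julia set descends to its boundary.
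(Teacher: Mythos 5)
Your proposal is correct and follows essentially the same route as the paper's proof: conjugation symmetry of the coefficients for the reflection invariance, the repelling fixed point at $0$ for membership of the origin, and strict monotonic escape of positive orbits (ruling out a finite limit because $P$ has no positive fixed point) for the last claim. The only cosmetic difference is that you conclude via $x\notin K(P)$ and $\mathcal{J}(P)=\partial K(P)\subseteq K(P)$, while the paper phrases the same fact as $(0,\infty)$ lying in the attracting basin of $\infty$, hence in the Fatou set.
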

 \begin{proof}
      Since all the coefficients of \(P\) are real, $P(\overline{z})=\overline{P(z)}$, and in general,  $P^n(\overline{z})=\overline{P^n(z)}$ for all $n$ and $z \in \mathbb{C}$.
	Therefore $\{P^n (z)\}_{n>0}$ is bounded if and only if   $\{P^n (\overline{z})\}_{n>0}$ is bounded. Consequently, $z\in \mathcal{J}(P)$ if and only if  $\overline{z}\in \mathcal{J}(P) $. 
	\par The point $0$ is a repelling fixed point of $P$, and therefore it is in the Julia set of $P$. 
 \par 
For every $x>0$, we have $P(x)>x$ and $P$ is strictly increasing on the positive real line. This implies that \(P^2(x)>P(x)>x\), and the sequence $\{P^{n}(x)\}_{n>0}$ is strictly increasing. If this sequence converges to a real number (it is so whenever it is bounded) then the limit would have to be a positive fixed point of $P$. However, $P$ has no positive fixed point. Thus, $P^n (x) \to \infty$ as $n \to \infty$ for all $x>0$. 
Since $\infty$ is a super-attracting fixed point of $P$, every positive number is in the attracting domain corresponding to $\infty$. In other words, $(0, \infty)$ is contained in the Fatou set of $P$, i.e., the Julia set of $P$ does not intersect the positive real line. 
 \end{proof}
%
% \textbf{ By \(K_m\), we mean a complete graph on \(m\) vertices and \( K_m ^c\) represents the empty graph on \(m\) vertices. \(P_m\) is the path graph on \(m\) vertices. We consider graphs with at least two vertices. The reduced independence polynomial has no constant term and all other coefficients are natural numbers. }

\subsection{No circle}
Complement of a graph $G$, denoted by $G^c$ is the graph with the same vertex set as that of $G$ such that two vertices in $G^c$ are adjacent if and only if they are not adjacent in $G$. As observed by
Brown et al. in \cite{brownetal2003},  the independence fractal of the graph $K_n ^c$ is $\{0\}$ when $n=1$ and it is the circle $\{z\in \mathbb{C}: |z+1| = 1\}$ for all $n\geq2$. We now prove that  $ K_n^c$ is the only graph whose independence fractal is a circle. This is required for the proof of Theorem~\ref{nocircle}.

\begin{lemma}
  For a natural number $n \geq 2$, if $G$ is a graph with independence number $n$ whose independence fractal is a circle then $G = K_n ^c$ and the circle is $\{z\in \mathbb{C}: |z+1| = 1\}$.
  \label{circle}
\end{lemma}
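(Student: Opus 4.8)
The plan is to translate the hypothesis into a statement about the Julia set of $P_G$ and then exploit the rigidity of polynomials whose Julia set is a round circle. By Theorem~\ref{indfractal=Juliaset} we have $\mathcal{F}(G)=\mathcal{J}(P_G)$, so we may assume that $\mathcal{J}(P_G)$ is a circle $C$. Now $P_G(z)=a_1 z+a_2 z^2+\cdots+a_n z^n$ has positive integer coefficients with $a_n\neq 0$ (so $\deg P_G=n\geq 2$), satisfies $P_G(0)=0$, and has $P_G'(0)=a_1\geq\alpha(G)=n>1$. Hence Lemma~\ref{real_points} applies: $C$ is symmetric about the real axis, contains $0$, and avoids the positive real line. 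A circle symmetric about $\mathbb{R}$ has its centre $c$ on $\mathbb{R}$; since it passes through $0$, its radius equals $|c|$, say $r=|c|>0$. The choice $c=r$ is impossible, as then the point $2r>0$ diametrically opposite $0$ would lie in $C$ on the positive real axis. Therefore $c=-r$ and $C=\{z:|z+r|=r\}$ for some $r>0$.

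The crux is to show that a polynomial whose Julia set is the circle $C$ must be affinely conjugate to a power map. I would argue as follows. The filled Julia set $K(P_G)$ is the complement of the basin of $\infty$, which is connected with boundary $\mathcal{J}(P_G)=C$; hence $K(P_G)$ equals the closed disk $\overline{D}$ bounded by $C$. Being completely invariant (Theorem~\ref{Juliaset}), $P_G$ restricts to a proper holomorphic self-map of $\overline{D}$ of degree $n$. Conjugating by the affine map $\psi(z)=(z-c)/r$ that carries $D$ onto the unit disk, the map $g:=\psi\circ P_G\circ\psi^{-1}$ is a proper holomorphic self-map of the unit disk of degree $n$, hence a finite Blaschke product; since $g$ is a polynomial it can have no finite pole, which forces $g(z)=\lambda z^n$ with $|\lambda|=1$. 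Undoing the conjugacy with $c=-r$ yields
\begin{equation*}
	P_G(z)=-r+\frac{\lambda}{r^{n-1}}(z+r)^n .
\end{equation*}

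To finish, I would read off the constraints coming from $P_G$ being a reduced independence polynomial. Evaluating at $0$ gives $P_G(0)=r(\lambda-1)=0$, so $\lambda=1$, and then
\begin{equation*}
	P_G(z)=r^{1-n}(z+r)^n-r,\qquad P_G'(0)=n\,r^{1-n}r^{n-1}=n .
\end{equation*}
Since $P_G'(0)=a_1$ is exactly the number of vertices of $G$, this forces $|V(G)|=n=\alpha(G)$; a graph on $n$ vertices with independence number $n$ has no edges, so $G=K_n^c$. As recorded in the text, the independence fractal of $K_n^c$ is the circle $\{z:|z+1|=1\}$ (equivalently $r=1$, consistent with $a_n=r^{1-n}=1$), which identifies $C$ and completes the argument.

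The main obstacle is the rigidity step: showing that a circular Julia set forces $P_G$ to be conjugate to $z^n$; everything afterwards is bookkeeping with the coefficients, and the geometry of $C$ is already fixed by Lemma~\ref{real_points}. The delicate points there are verifying that $K(P_G)$ is precisely the closed disk bounded by $C$ (so that the Blaschke-product argument applies) and that a finite Blaschke product which is simultaneously a polynomial must be a monomial. If one prefers to avoid Blaschke products, the decisive identity $P_G'(0)=n$ can instead be obtained by noting that the fixed point $0$ corresponds under $\psi$ to the fixed point $w=1$ of $w\mapsto w^n$, whose multiplier $n$ is a conjugacy invariant.
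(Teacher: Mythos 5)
Your proposal is correct and follows the same overall skeleton as the paper's proof: both use Lemma~\ref{real_points} to pin the circle down as $\{z:|z+r|=r\}$ with $r>0$, conjugate by the affine map sending it to the unit circle and $0$ to $1$, conclude that the conjugated map is $\lambda z^n$ with $|\lambda|=1$, and then extract $\lambda=1$ and $a_1=P_G'(0)=n$ to deduce that $G$ has $n$ vertices and independence number $n$, hence is edgeless. The one genuine divergence is the rigidity step, which you correctly identify as the crux: the paper disposes of it by citing Theorem 1.3.1 of Beardon (a rational map leaving the unit circle completely invariant is $\lambda z^{\pm n}$), whereas you rederive it by showing the filled Julia set is the closed disk bounded by $C$, so that the conjugated map is a proper degree-$n$ self-map of the unit disk, hence a finite Blaschke product, hence (being a polynomial, so pole-free) a unimodular multiple of $z^n$. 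Your route is self-contained and makes visible exactly which dynamical facts are needed (complete invariance of $K(P_G)$ and Fatou's classification of proper self-maps of the disk), at the cost of a slightly longer argument; the paper's citation is shorter but opaque. The endgame also differs cosmetically: the paper pins down $z_0=-1$ by comparing leading coefficients after observing $a_n=\binom{n}{n}=1$, while you invoke the already-recorded fact that $\mathcal{F}(K_n^c)=\{z:|z+1|=1\}$ (equivalently, $a_n=r^{1-n}=1$ forces $r=1$); both are sound.
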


\begin{proof}
    Let $P(z)=a_1 z +a_2 z^2+\cdots +a_n z^n$ be the reduced independence polynomial of $G$ and its Julia set be a circle. By Lemma \ref{real_points}, $ \mathcal{J}(P) $ is a circle containing $0$ and is symmetric with respect to the real line. Further, this circle does not contain any positive real number. Therefore, $\mathcal{J}(P)=\{z\in\mathbb{C}:|z - z_0|=|z_0|\}$ for some negative real number $z_0$. Let \(C\) be the unit circle and define \(\phi(z) = \frac{z -z_0}{-z_0}\). Then \(\phi (\mathcal{J}(P))=C\) and \(C\) is completely invariant under the map \(\phi P \phi^{-1}\), which is a polynomial with degree \(n\). By Theorem 1.3.1 in \cite{beardon}, \(\phi P \phi^{-1}(z) =  \beta z^n\), for some \(\beta \in \mathbb{C}\) with \(|\alpha|=1\). Thus, 
\begin{equation}
 \phi (P(z)) = \beta (\phi (z))^n ~\mbox{ for all}~  z \in \mathbb{C} .
 \label{circle-conjugacy}
 \end{equation}
  Putting \(z = 0\) in Equation (\ref{circle-conjugacy}), we get \(\beta = 1\). Differentiating Equation (\ref{circle-conjugacy}) and putting $z=0$, we have $a_1 =n$. Therefore, there is only one independent set with cardinality $n$, i.e., $a_n=1$. Now comparing the coefficients of $z^n$ on both sides of Equation~(\ref{circle-conjugacy}), we get $z_0 =-1$. Thus, $P(z) = (1+z)^n -1$. Therefore $G= K_n ^c$ and the Julia set of $P$ is the circle $\{z\in \mathbb{C}: |z+1| = 1\}$.
\end{proof}

\begin{rem}
	\begin{enumerate}
		\item If $G=K_n ^c, n >1$ then $G^m =K_{n^m}^c$,  $I_{G^m}(z)=(1+z)^{n^m}$ 	and $P_{G}^{m}(z)=(1+z)^{n^m} -1$, which gives that $\mathcal{A}(G)=\{-1\}$. It  follows from  Lemma~\ref{circle} that $\mathcal{A}(G)$ and $\mathcal{F}(G)$ do not coincide. However, this is not the only such situation.  
\item   There are non-empty graphs for which the independence fractal does not coincide with the independence attractor. Indeed, the independence polynomial of the graph $G=P_2 \cup 2 K_1$ is $1+4z +5 z^2 +2z^3$ and that has a multiple root at $-1$. By Theorem~\ref{Thm_attr1}, $\mathcal{F}(G) \neq \mathcal{A}(G)$.
 \end{enumerate}
  	 \label{circle-attractor}
\end{rem} 

We now present the proof of Theorem~\ref{nocircle}
\begin{proof}[Proof of Theorem~\ref{nocircle}]
	The independence attractor of an empty graph is singleton by Remark~\ref{circle-attractor}(1), and we are done. Let $G$ be a non-empty graph. 
	 Suppose on the contrary that independence attractor $\mathcal{A}(G)$ is a circle. If $-1$ is either not a root  or is a simple root of $I_G$ then $ \mathcal{A}(G)=\mathcal{F}(G)$ by Theorem~\ref{Thm_attr1}. It follows from Lemma~\ref{circle} that $G$ is an empty graph. 
	 \par 
	 Let $-1$ be a multiple root of $I_G$. Then $-1$ is a super-attracting fixed point of $P_G$ and, it follows from Theorem~\ref{Thm_attr1} and Theorem~\ref{indfractal=Juliaset} that $\mathcal{A}(G)$ is the disjoint union of $\mathcal{J}(P_G)$ and $\cup_{m \geq 1} P_G ^{-m}(-1)$. In particular, the Julia set  $\mathcal{J}(P_G)$ is strictly contained in the circle. This means that, the Fatou set is connected and is the attracting domain corresponding to $\infty$. Thus, for every point $z$ in the Fatou set, $P^n (z) \to \infty$ as $n \to \infty$. However, $-1$, being a super-attracting fixed point of $P_G$, is in the Fatou set of $P_G$.    This is a contradiction and the proof is complete.
\end{proof}
\subsection{Four line segments}
We start with an important consequence of a situation when an independence fractal is a line segment.
\begin{theorem}
	If the independence attractor $\mathcal{A}(G)$ of a graph $G$ is a line segment then  $\mathcal{A}(G)=\mathcal{F}(G)$.
	\label{attractor=fractal}
\end{theorem}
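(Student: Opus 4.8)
The plan is to mirror the structure of the proof of Theorem~\ref{nocircle} and reduce everything to the trichotomy supplied by Theorem~\ref{Thm_attr1}. By that theorem, $\mathcal{A}(G)$ and $\mathcal{F}(G)$ coincide whenever $-1$ is either not a root or only a simple root of $I_G$, so the entire content lies in ruling out the one remaining case, namely that $-1$ is a multiple root of $I_G$. I would therefore argue by contradiction: assume $\mathcal{A}(G)$ is a line segment $J$ and that $-1$ is a multiple root of $I_G$, and derive a contradiction.

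Under this assumption, $-1$ is a super-attracting fixed point of $P_G$, and by Theorem~\ref{Thm_attr1}(3) together with Theorem~\ref{indfractal=Juliaset} the attractor decomposes as the disjoint union $\mathcal{J}(P_G) \sqcup \left( \bigcup_{m\geq 1} P_G^{-m}(-1)\right)$. In particular $\mathcal{J}(P_G) \subseteq J$, so the Julia set is contained in a line segment. The key topological observation I would record is that a compact subset of a line segment does not separate the plane: after an affine change of coordinates we may take $\mathcal{J}(P_G)$ to be a compact subset $K$ of $\mathbb{R}$, and any two points of $\widehat{\mathbb{C}}\setminus K$ can be joined by first pushing into the open (hence connected) upper half-plane and then crossing the real axis at a point outside the bounded set $K$. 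Consequently $F(P_G) = \widehat{\mathbb{C}} \setminus \mathcal{J}(P_G)$ is connected.

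A connected Fatou set consists of a single Fatou component, and since $\infty$ is a super-attracting fixed point lying in $F(P_G)$ by Theorem~\ref{poly_map}, this single component is precisely the attracting basin of $\infty$; hence $P_G^n(z) \to \infty$ for every $z \in F(P_G)$ (this can also be read off from Theorem~\ref{Thm3}). But $-1$, being a super-attracting fixed point, lies in $F(P_G)$ and satisfies $P_G^n(-1) = -1$ for all $n$, contradicting $P_G^n(-1)\to\infty$. Therefore $-1$ cannot be a multiple root of $I_G$, and we are left with the first two cases of Theorem~\ref{Thm_attr1}, each of which yields $\mathcal{A}(G) = \mathcal{F}(G)$.

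The main obstacle, such as it is, is the connectivity of the Fatou set, but here the line-segment case is actually gentler than the circle case because a line segment never separates the plane, whereas a full circle does. In the circle proof one first had to argue that $\mathcal{J}(P_G)$ is a proper subarc of the circle before connectivity could be invoked; here the containment $\mathcal{J}(P_G)\subseteq J$ in a non-separating set gives connectivity of $F(P_G)$ immediately, with no need to exploit the disjointness of the backward-orbit part. I would take care to state the separation fact cleanly, since that is the one place where the specific geometry of $J$ genuinely enters the argument.
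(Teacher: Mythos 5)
Your proof is correct and follows essentially the same route as the paper's: both arguments deduce from $\mathcal{F}(G)=\mathcal{J}(P_G)\subseteq\mathcal{A}(G)$ that the Julia set lies in a line segment, hence the Fatou set is connected and equals the attracting basin of $\infty$, which forbids $-1$ from being a (super-attracting) fixed point and eliminates case (3) of Theorem~\ref{Thm_attr1}. The only differences are presentational: you frame it as a contradiction and spell out the non-separation of the plane by a compact subset of a segment, which the paper asserts without elaboration.
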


\begin{proof} By Theorem~\ref{Thm_attr1}, $\mathcal{F}(G) \subseteq \mathcal{A}(G)$. It follows from the hypothesis of this theorem that $\mathcal{F}(G)$
 is contained in a line segment. Since $\mathcal{F}(G)$ is  the Julia set of $P_G$,  the Fatou set of $P_G$ is connected, and is the attracting domain corresponding to $\infty$ (see Theorem~\ref{poly_map}). This gives that there is no finite attracting fixed point for $P_G$. In particular, $-1$ is not a super-attracting fixed point of $P_G$. By Theorem \ref{Thm_attr1}, $\mathcal{A}(G)=\mathcal{F}(G)$.
\end{proof}

In view of Theorem~\ref{attractor=fractal}, it is enough to determine all  reduced independence polynomials  whose Julia set is a line segment. To do this, we need some useful properties of Chebyshev polynomials. The Chebyshev polynomials (of the first kind) are defined by the recurrence relation 
\begin{equation}T_{n}(z)=2zT_{n-1}(z)-T_{n-2}(z), ~\mbox{where}~  T_0(z)=1  ~\mbox{and} ~T_1(z)=z.
\label{cheby-recurrence}\end{equation} Note that  $T_2(z)=2z^2-1$ and  $T_3(z)=4z^3-3z$.  

\begin{lemma}
	Let $T_n$ denote the Chebyshev polynomial with degree $n$. Then, 
	\begin{enumerate}[label=(\roman{*})]
	
		\item $T_n(1)=1$ and  $T_n'(1)=n^2$ for all $n \geq 0$, and $T_n''(1)=\frac{2}{3}\binom{n^2}{2}$ 
	    for all $n \geq 2$.
	  \label{atone}
  %and $T_n'(-1)=(-1)^{n+1}n^2$  
% 
%		\item $T_n''(1)=\frac{2}{3}\binom{n^2}{2}$ 
%  %and $T_n ''(-1)=(-1)^n \frac{2}{3} \binom{n^2}{2}$ 
%  for all $n \geq 2$. In general, for all \(n\geq 1\) and \(1\leq p \leq n\), \[T_n^{(p)}(1)=\prod_{k=0}^{p-1}\frac{n^2-k^2}{2k+1}.\]
  	\item The coefficients of $z^n$ and $z^{n-1}$ in $T_n$ are $2^{n-1}$  and zero respectively for all \(n\geq 1\). \label{coefficients}
\end{enumerate}
	\label{derivatives}	
\end{lemma}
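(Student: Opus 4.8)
The plan is to establish every assertion by induction on $n$, using only the defining recurrence $T_n(z)=2zT_{n-1}(z)-T_{n-2}(z)$ from Equation~(\ref{cheby-recurrence}) together with the initial data $T_0=1$, $T_1=z$. Each claim is a purely formal consequence of this recurrence, so there is no need to invoke the trigonometric representation $T_n(\cos\theta)=\cos n\theta$. First I would record the observation that $\frac{2}{3}\binom{n^2}{2}=\frac{n^2(n^2-1)}{3}$, so that the target value for $T_n''(1)$ is the more convenient $\frac{n^2(n^2-1)}{3}$.

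For part (i), I begin with $T_n(1)=1$: the base cases $T_0(1)=T_1(1)=1$ are immediate, and evaluating the recurrence at $z=1$ gives $T_n(1)=2T_{n-1}(1)-T_{n-2}(1)=2-1=1$. Differentiating the recurrence once yields $T_n'(z)=2T_{n-1}(z)+2zT_{n-1}'(z)-T_{n-2}'(z)$; setting $z=1$ and substituting the inductive hypothesis $T_{n-1}'(1)=(n-1)^2$, $T_{n-2}'(1)=(n-2)^2$ together with $T_{n-1}(1)=1$, a short expansion gives $2+2(n-1)^2-(n-2)^2=n^2$, which closes the induction (base cases $T_0'(1)=0$, $T_1'(1)=1$). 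Differentiating a second time gives $T_n''(z)=4T_{n-1}'(z)+2zT_{n-1}''(z)-T_{n-2}''(z)$; evaluating at $z=1$ and feeding in the already-proven value $T_{n-1}'(1)=(n-1)^2$ along with the inductive hypothesis $T_k''(1)=\frac{k^2(k^2-1)}{3}$ for $k=n-1,n-2$ reduces the claim to the single polynomial identity
\[
4(n-1)^2+\frac{2(n-1)^2\bigl((n-1)^2-1\bigr)}{3}-\frac{(n-2)^2\bigl((n-2)^2-1\bigr)}{3}=\frac{n^2(n^2-1)}{3}.
\]
The base cases $n=2,3$ (namely $T_2''(1)=4$ and $T_3''(1)=24$) are checked directly from $T_2=2z^2-1$ and $T_3=4z^3-3z$.

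For part (ii), let $c_n$ denote the coefficient of $z^n$ in $T_n$. Since $\deg T_{n-2}=n-2<n$, the top-degree term in the recurrence comes solely from $2zT_{n-1}$, so $c_n=2c_{n-1}$; with $c_1=1$ this gives $c_n=2^{n-1}$. For the vanishing of the $z^{n-1}$ coefficient, the cleanest route is a parity statement: $T_n$ is an even polynomial when $n$ is even and an odd polynomial when $n$ is odd. This follows by induction, since $T_0,T_1$ have the correct parities, and if $T_{n-1}$ has parity $n-1$ and $T_{n-2}$ has parity $n-2\equiv n\pmod 2$, then multiplying by $2z$ shifts the parity of $T_{n-1}$ to $n$, matching that of $T_{n-2}$, so $T_n$ has parity $n$. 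Hence $T_n$ contains no monomial of degree $n-1$, i.e. the coefficient of $z^{n-1}$ is zero.

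The only step requiring genuine computation is the inductive step for $T_n''(1)$, which reduces to verifying the displayed identity between degree-four polynomials in $n$ by expanding $(n-1)^4,(n-1)^2,(n-2)^4,(n-2)^2$ and collecting terms. This is routine, but some care is needed to align the base cases with the stated range $n\geq 2$: the inductive step for a given $n$ consumes the values at $n-1$ and $n-2$, so I must ensure both already lie within the range covered by the formula (which they do, once $n=2,3$ are verified by hand). Everything else is a one-line consequence of the recurrence.
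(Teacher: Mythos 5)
Your proof is correct and follows essentially the same route as the paper: induction on the recurrence (\ref{cheby-recurrence}) for $T_n(1)$, $T_n'(1)$, $T_n''(1)$ and for the leading coefficient, with the displayed degree-four identity for the second derivative being exactly the computation the paper leaves as ``similarly, it can be seen.'' The only cosmetic difference is that for the vanishing of the $z^{n-1}$ coefficient you use the parity of $T_n$, whereas the paper recursively tracks that coefficient directly; both are one-line consequences of the same recurrence.
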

\begin{proof} The proofs are either straightforward or are by the method of induction.
	
	\begin{enumerate}[label=(\roman{*})]
		%\item 
		%By definition $T_n(0)=-T_{n-2}(0)$ and it follows from %$T_{1}(0)=0$
		%that $T_n(0)=0$ for all odd $n$. Since  $T_{2}(0)=-1$, %$T_n(0)=(-1)^{\frac{n}{2}}$ for all even $n$.
		
		\item	
		Observe that $T_0(1)=1$ and  $T_1(1)=1$. It follows from Equation~(\ref{cheby-recurrence}) that $T_n(1)=1$ for all $n \geq 0$.		
		%\par Note that $T_0(-1)=-1$ and $T_1(-1)=1$. Let $n$ be a natural number and $T_k (-1)=(-1)^k$ for all $k <n$. Then  $T_{n}(-1)=2(-1)(-1)^{n-1}-(-1)^{n-2} =(-1)^{n}$.   
	 Observe that $T_n'(1)=n^2$  for $n=0, 1$. For an arbitrary natural number $n$, let $T_{k}'(1)=k^2$ for all $k<n$. Since $T_{n}'(z)= 2 T_{n-1}(z)+2z T_{n-1} '(z)-T_{n-2}'(z)$,  $T_{n}'(1)=2T_{n-1}(1)+2T_{n-1}'(1)-T_{n-2}'(1)$  which is nothing but $n^2$. Similarly, it can be seen that $T_n''(1)=\frac{2}{3}\binom{n^2}{2}$ 
	 for all $n \geq 2$.

%		\item
%		As  $T_{n}'(z)= 2 T_{n-1}(z)+2z T_{n-1} '(z)-T_{n-2}'(z)$, $$T_{n}''(z)= 4 T_{n-1}'(z)+2z T_{n-1} ''(z)-T_{n-2}''(z).$$
%        Observe that $T_k ''(1)=\frac{2}{3}\binom{k^2}{2}$ for $k=2,3$. Let $n$ be a natural number and $T_k''(1) = \frac{2}{3}\binom{k^2}{2}$ for all $k<n$. Then, $T_{n}''(1)=4T_{n-1}'(1)+2T_{n-1}''(1)-T_{n-2}''(1)$. This implies that $T_{n}''(1)=4(n-1)^2+2\cdot \frac{2}{3}\binom{(n-1)^2}{2}-\frac{2}{3}\binom{(n-2)^2}{2}$ which is $\frac{2}{3} [ 6 (n-1)^2 +(n-1)^2 (n^2 -2n)-\frac{1}{2} (n-2)^2(n^2-4n +3)]$. This can be seen to be $\frac{2}{3} \binom{n^2}{2}$. 
       
\item 
It follows from Equation~(\ref{cheby-recurrence}) that the coefficient of $z^n$ in \(T_n\) is  twice the coefficient of \(z^{n-1}\)  in \(T_{n-1}\).  Since the coefficient $z$ in $ T_1 $  is $1$, it follows recursively that the coefficient of $z^n$ in $T_n$ is $2^{n-1}$. Similarly, since the coefficient  of $z^{n-1}$ in \(T_n\) is  twice the coefficient of \(z^{n-2}\)  in \(T_{n-1}\) and $T_1 (z)=z$, we have that 
the coefficient  of $z^{n-1}$ in \(T_n\) is $0$.\end{enumerate}
\end{proof}

We determine all possible  polynomials without a constant term that are conjugate to  a Chebyshev  polynomial via $z \mapsto az+1$ for every non-zero $a$.
 
\begin{lemma} 
If $P(z)=a_1z+a_2z^2+\cdots+a_nz^n$ is a polynomial  such that $aP(z)+1=T_n(az+1)$ for some non-zero $a$ then $$a_m=\frac{1}{m!}a^{m-1}\prod_{k=0}^{m-1}\frac{n^2-k^2}{2k+1}~\mbox{ for}~ 1\le m \le n.$$
 \label{poly}
\end{lemma}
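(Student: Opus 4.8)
The plan is to read off the coefficients $a_m$ by comparing the Taylor expansions of the two sides of the identity $aP(z)+1=T_n(az+1)$ at $z=0$, thereby reducing the whole statement to a clean formula for the higher derivatives $T_n^{(m)}(1)$. Writing $H(z)=T_n(az+1)$, the chain rule gives $H^{(m)}(z)=a^{m}T_n^{(m)}(az+1)$, so $H^{(m)}(0)=a^{m}T_n^{(m)}(1)$ and the coefficient of $z^m$ in $T_n(az+1)$ equals $\frac{a^{m}T_n^{(m)}(1)}{m!}$. On the left-hand side the coefficient of $z^m$ for $m\ge 1$ is simply $a\,a_m$. Since $a\neq 0$, equating these gives
\begin{equation*}
a_m=\frac{a^{m-1}}{m!}\,T_n^{(m)}(1),\qquad 1\le m\le n.
\end{equation*}
Thus the lemma is equivalent to the identity $T_n^{(m)}(1)=\prod_{k=0}^{m-1}\frac{n^2-k^2}{2k+1}$, which is already consistent with the values $T_n(1)=1$, $T_n'(1)=n^2$ and $T_n''(1)=\frac{2}{3}\binom{n^2}{2}$ recorded in Lemma~\ref{derivatives}.

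To prove this identity I would use the fact that $T_n$ satisfies the Chebyshev differential equation $(1-z^2)T_n''(z)-zT_n'(z)+n^2T_n(z)=0$. Differentiating this relation $p$ times by Leibniz's rule, and using that $1-z^2$ has vanishing derivatives of order at least three, one obtains
\begin{equation*}
(1-z^2)T_n^{(p+2)}-(2p+1)z\,T_n^{(p+1)}+\bigl(n^2-p(p-1)-p\bigr)T_n^{(p)}=0.
\end{equation*}
Evaluating at $z=1$ makes the leading term vanish and collapses $n^2-p(p-1)-p$ to $n^2-p^2$, leaving the two-term recurrence
\begin{equation*}
(2p+1)\,T_n^{(p+1)}(1)=(n^2-p^2)\,T_n^{(p)}(1).
\end{equation*}
Starting from $T_n(1)=1$ (Lemma~\ref{derivatives}) and iterating this recurrence from $p=0$ up to $p=m-1$ yields exactly $T_n^{(m)}(1)=\prod_{k=0}^{m-1}\frac{n^2-k^2}{2k+1}$, which combined with the coefficient comparison completes the argument.

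I expect the only delicate point to be the bookkeeping in the Leibniz expansion of $(1-z^2)T_n''$ and $zT_n'$; once the terms are organised correctly the evaluation at $z=1$ is immediate, since the highest-order derivative drops out and the surviving two terms produce precisely the product appearing in the statement. Should one prefer to avoid invoking the Chebyshev differential equation, the same recurrence $b_{n,m+1}=\frac{n^2-m^2}{2m+1}b_{n,m}$ (with $b_{n,m}=T_n^{(m)}(1)$) can instead be extracted by differentiating the three-term recurrence~(\ref{cheby-recurrence}) and inducting on $n$, or one may substitute $u=\tfrac{1-z}{2}$ and read off the coefficients from the hypergeometric expansion $T_n(z)={}_2F_1(-n,n;\tfrac12;u)$; the differential-equation route, however, keeps the computation fully self-contained and elementary.
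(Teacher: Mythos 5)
Your proof is correct, and its main step is exactly the paper's: differentiate $aP(z)+1=T_n(az+1)$ $m$ times, evaluate at $z=0$ to get $a\,m!\,a_m=a^mT_n^{(m)}(1)$, and divide by $a\neq 0$. The only divergence is in how the identity $T_n^{(m)}(1)=\prod_{k=0}^{m-1}\frac{n^2-k^2}{2k+1}$ is handled: the paper simply cites it (Appendix A of \cite{john}), whereas you derive it from the Chebyshev differential equation $(1-z^2)T_n''-zT_n'+n^2T_n=0$ by Leibniz's rule; your computation is right, since $n^2-p(p-1)-p=n^2-p^2$ and the $(1-z^2)$ term kills the top derivative at $z=1$, giving the two-term recurrence $(2p+1)T_n^{(p+1)}(1)=(n^2-p^2)T_n^{(p)}(1)$ whose iteration from $T_n(1)=1$ yields the product. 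What your route buys is self-containedness of the key identity at the cost of importing one standard fact (the ODE) that the paper's recurrence definition of $T_n$ does not itself establish; the paper's route is shorter but rests entirely on the external reference.
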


\begin{proof}
Fix  $m$ such that $1\le m\le n$. Differentiating both sides of $a P(z)+1=T_n(az +1)$  $m$-times and putting $z=0$, we get $a ~m! a_m=a^m T_{n}^{(m)}(1)$  and   therefore, $a_m=a^{m-1} \frac{1}{m!}T_n^{(m)}(1).$ As $T_n^{(m)}(1)=\prod_{k=0}^{m-1}\frac{n^2-k^2}{2k+1}$ (see Appendix A, \cite{john}), %\href{https://en.wikipedia.org/wiki/Chebyshev_polynomials}{3}.
	we have $a_m=\frac{1}{m!}a^{m-1}\prod_{k=0}^{m-1}\frac{n^2-k^2}{2k+1}$ for $1\le m \le n$ as desired.
 \end{proof}
If the polynomial $P$ in Lemma~\ref{poly} is the reduced independence polynomial of some graph then   $aP(z)+1=T_n(az+1)$ is not possible for $a=\frac{5}{2}$. To prove it, we need an inequality involving the number of vertices, edges and triangles of a graph. The proof of the following lemma follows the discussion in Section 2 of \cite{NS}.
  \begin{lemma}
  	Let $G$ be a simple graph with at least two vertices. Also, let $N, E$ and $T$ denote the number of vertices, edges and triangles in $G$ respectively. If $4E > N^2$ then $T \geq \frac{E}{3N} (4E -N^2)$.
  	\label{Nograph-k5-key}
  \end{lemma}
  \begin{proof}
For an edge $e$ joining the vertices $v_1, v_2$, let $n_0 (e), n_1 (e)$ and $n_2 (e) $ be the number of vertices different from $v_1, v_2$ that are adjacent to none of, exactly one of and both of $v_1, v_2$, respectively. Then  \begin{equation}
	n_0 (e)+ n_1 (e)+n_2 (e) =N-2  ~\mbox{for every edge}~ e.
	\label{N-2}
\end{equation}
 Let $P_n$ denote the path  graph on $n$ vertices. We call it  an $n$-path. Let $T_1$ be the number of times  $P_2 \cup K_1 $, the disjoint union of $P_2$ and $K_1$,  occurs in $G$ as an induced subgraph. Then $\sum_{e \in E(G)} n_0 (e)=T_1$.
 Similarly, let $T_2$ and $T $ denote the number of $3$-paths in $G$ that is not a subgraph of any triangle, and triangles in $G$ respectively.  Taking the sum over all the edges of $G$, we get  $\sum_{e \in E(G)}n_1 (e) =2 T_2$ and  $\sum_{e \in E(G)}n_2 (e) = 3T$.  To see these equalities, note that each $3$-path in $G$ is counted exactly twice and each triangle is counted exactly thrice while the sum over all edges is taken. Thus, $ T_1 +2 T_2+3T  = \sum_{e \in E(G)}n_0 (e) + n_1 (e)+ n_2 (e)$ and using Equation~(\ref{N-2}) we get 
 \begin{equation}
T_1 + 2 T_2 +3 T  =(N-2)E.
\label{T1T2T3} 
\end{equation} 
Consider a vertex $v$ of $G$ with degree $\deg(v)$ at least $2$. For each choice of two edges incident up on $v$,   either there is a $3$-path (that is not a subgraph of any triangle) with $v$ as its  middle vertex, or there is a triangle containing $v$. There are $\binom{\deg(v)}{2}$ such distinct choices.  On the other hand, each $3$-path that is not a subgraph of any triangle is counted exactly once and each triangle is counted exactly thrice when all such choices are considered for all vertices with degree at least $2$. This gives that $	T_2 +3 T =\sum_{v \in V(G), ~\deg(v) \geq 2} \binom{\deg(v)}{2}.$    Since $ \sum_{v \in V(G), ~\deg(v) < 2} \binom{\deg(v)}{2}=0, $  $ \sum_{v \in V(G), ~\deg(v) \geq 2} \binom{\deg(v)}{2}= \sum_{v\in V(G)} \binom{\deg(v)}{2}$. Further, $-2E + \sum_{v\in V(G)}  \deg(v)^2 =-\sum_{v\in V(G)} \deg(v) + \sum_{v\in V(G)}  \deg(v)^2 = 2 \sum_{v\in V(G)}  \binom{\deg(v)}{2}$. In other words, 
 \begin{equation}
 2	T_2 +6 T  =-2 E +\sum_{v \in V(G)} \deg(v)^2.
	\label{T2T3}
\end{equation}
Eliminating $T_2$ from Equations (\ref{T1T2T3})  and (\ref{T2T3}), it is found that \begin{equation}
	3T =T_1 -NE +\sum_{v \in V(G)} \deg(v)^2.
	\label{3T3}
	\end{equation} 
Now, $\sum_{1 \leq i <j\leq N} (x_i -x_j)^2 =N\sum_{1\leq i \leq N} x_i ^2 - (\sum_{1 \leq i \leq N} x_i)^2$ for each set of natural numbers $\{x_1, x_2, \cdots, x_N\}$. Putting $\deg(v_i)=x_i$, observe that $\sum_{1 \leq i \leq N} x_i = 2E$, and consequently,  $N \sum_{v \in V(G)} \deg(v)^2 =4E^2+\sum_{1 \leq i < j\leq N} (\deg(v_i) -\deg(v_j))^2$. Putting this in Equation~(\ref{3T3}) we have $3 N T  = 4E^2 + \sum_{1 \leq i <j \leq N} (\deg(v_i) -\deg(v_j))^2 +NT_1 -N^2 E$. As $ \sum_{1 \leq i <j \leq N} (\deg(v_i) -\deg(v_j))^2 +NT_1 \geq 0$, $3 NT  \geq 4E^2 -N^2 E$, i.e., $$ T   \geq \frac{E(4E -N^2)}{3N}.$$ 
  \end{proof}
  \begin{lemma}
  	For $n \geq 2$, there is no graph with independence number $n$ such that its reduced independence polynomial $P$ satisfies $\frac{5}{2}(P(z))+1=T_n (\frac{5}{2}z+1)$.
  	\label{knot5}
  \end{lemma}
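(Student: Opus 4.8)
The plan is to assume, towards a contradiction, that such a graph $G$ exists and to extract enough of its combinatorial data to violate the triangle inequality of Lemma~\ref{Nograph-k5-key}. Writing $P(z)=a_1z+a_2z^2+\cdots+a_nz^n$ for the reduced independence polynomial, I would first feed $a=\tfrac{5}{2}$ into Lemma~\ref{poly} to read off the coefficients; only the first three are needed, and they come out to
\begin{equation*}
a_1=n^2,\qquad a_2=\frac{5n^2(n^2-1)}{12},\qquad a_3=\frac{5n^2(n^2-1)(n^2-4)}{72}.
\end{equation*}
Since $a_i$ counts the $i$-independent sets of $G$, these three numbers are, respectively, the number of vertices, of non-adjacent pairs, and of pairwise non-adjacent triples.

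The decisive observation is that Lemma~\ref{Nograph-k5-key} should be applied not to $G$ but to its complement $H=G^{c}$. Applied to $G$ itself the lemma is vacuous: the number of edges of $G$ is $\binom{n^2}{2}-a_2=\frac{n^2(n^2-1)}{12}$, so $4E-N^2<0$. Passing to $H$, an $i$-independent set of $G$ turns into an $i$-clique, so the vertex, edge and triangle counts of $H$ are exactly $N_H=a_1=n^2$, $E_H=a_2$ and $T_H=a_3$. Here one checks $4E_H-N_H^2=\frac{n^2(2n^2-5)}{3}>0$ for every $n\ge 2$, so the hypothesis of Lemma~\ref{Nograph-k5-key} is satisfied for $H$ (which has $n^2\ge 4$ vertices).

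The lemma then forces $T_H\ge \frac{E_H(4E_H-N_H^2)}{3N_H}=\frac{5n^2(n^2-1)(2n^2-5)}{108}$, and it remains to confront this with the true value $T_H=a_3=\frac{5n^2(n^2-1)(n^2-4)}{72}$. Cancelling the positive factor $5n^2(n^2-1)$, the required strict inequality $a_3<\frac{E_H(4E_H-N_H^2)}{3N_H}$ collapses to $\frac{n^2-4}{72}<\frac{2n^2-5}{108}$, i.e.\ to $-2<n^2$, which holds for all $n$. This contradiction rules out the existence of $G$ and proves the lemma.

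Beyond this routine arithmetic, the one genuine idea — and the only place where the argument could go wrong — is the passage to the complement: for $G$ the inequality $4E>N^2$ fails identically, so all the force of the proof comes from reinterpreting the Chebyshev-dictated coefficients $a_1,a_2,a_3$ as the vertex, edge and triangle counts of $G^{c}$ and observing that the resulting triangle count falls just short of the combinatorial lower bound.
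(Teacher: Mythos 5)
Your proposal is correct and follows essentially the same route as the paper: read off $a_1,a_2,a_3$ from Lemma~\ref{poly} with $a=\tfrac{5}{2}$, reinterpret them as the vertex, edge and triangle counts of $G^{c}$, verify $4E>N^2$, and apply Lemma~\ref{Nograph-k5-key} to reach the impossible inequality $n^2\le -2$. Your added observation that the lemma is vacuous for $G$ itself and the explicit arithmetic are welcome clarifications, but the argument is the paper's.
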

  \begin{proof}
  	Suppose on the contrary that there is a graph $G$ with independence number $n$ whose independence polynomial  $P$ satisfies $\frac{5}{2}(P(z))+1=T_n (\frac{5}{2}z+1)$. Then $P(z) = n^2z + \frac{5}{12}n^2(n^2-1)z^2 +  \frac{5}{72}n^2(n^2-1)(n^2-4)z^3  + \cdots + 5^{n-1}z^n$ by Lemma~\ref{poly}.
  	 The number of vertices $N$, edges $E$ and triangles $T$ in the complement $G^c$ of $G$ are $n^2,  \frac{5}{12}n^2 (n^2 -1), $ and $ \frac{5}{72} n^2 (n^2 -1)(n^2 -4) $ respectively. Since $4E =  \frac{20}{12} n^2 (n^2-1)$ and $ N^2= n^4$, $4E > N^2$ for all $n \geq 2$. Applying Lemma~\ref{Nograph-k5-key} to $G^c$, we get that $  \frac{5}{72} n^2 (n^2-1)(n^2 -4)  \geq  \frac{\frac{5}{12} n^2 (n^2-1) (4 \frac{5}{12} n^2 (n^2-1) -n^4)}{3n^2}$, i.e., $n^2 \leq -2$. This is absurd and the proof is complete.
  	 
  \end{proof}
   We also require two basic lemmas for the proof of Theorem~\ref{linesegment}.
  \begin{lemma}
  	\label{anidentity}
  	For all $n>2$, $6^{n-1} < \binom{n^2}{n}$.
  \end{lemma}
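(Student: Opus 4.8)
The plan is to bound $\binom{n^2}{n}$ from below by the elementary estimate $\binom{n^2}{n}\ge n^n$ and then compare $n^n$ with $6^{n-1}$. To get the lower bound I would write the binomial coefficient as the product $\binom{n^2}{n}=\prod_{k=0}^{n-1}\frac{n^2-k}{n-k}$ and note that every factor satisfies $\frac{n^2-k}{n-k}\ge n$: cross-multiplying, this is equivalent to $n^2-k\ge n(n-k)$, i.e. to $k(n-1)\ge 0$, which holds for all $k\ge 0$ and $n\ge 1$. Multiplying the $n$ factors together yields $\binom{n^2}{n}\ge n^n$.

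With this in hand the inequality is immediate for $n\ge 6$, since then $n^n\ge 6^{n}>6^{n-1}$, the first step using $n\ge 6$ and the second being trivial. The only remaining work is to dispose of the finitely many small cases $n=3,4,5$ by direct computation, namely $\binom{9}{3}=84>36=6^{2}$, $\binom{16}{4}=1820>216=6^{3}$, and $\binom{25}{5}=53130>1296=6^{4}$, each of which is easily checked.

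The single (and essentially only) obstacle is that the crude bound $n^n$ is \emph{just} too weak for the smallest exponent: for $n=3$ one has $n^n=27<36=6^{n-1}$, so the clean estimate $\binom{n^2}{n}\ge n^n$ does not by itself settle $n=3$ (and is mildly wasteful for $n=4,5$ as well). Consequently one cannot avoid verifying a short list of base cases by hand; alternatively, one could sharpen the product estimate by isolating the largest factor $n^2-n+1$ before bounding the rest by $n$, but treating the three small values directly is cleaner and is the route I would take.
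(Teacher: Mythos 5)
Your proposal is correct and follows essentially the same route as the paper: both establish $\binom{n^2}{n}\ge n^n$ by bounding each factor of $\prod_{k}\frac{n^2-k}{n-k}$ below by $n$, conclude for $n\ge 6$ via $n^n\ge 6^n>6^{n-1}$, and verify $n=3,4,5$ directly. Your explicit remark that the crude bound $n^n$ fails for $n=3$ (since $27<36$) is a useful clarification of why the base cases cannot be avoided, and your treatment of $n=6$ is actually slightly cleaner than the paper's, which states the asymptotic step only for $n>6$.
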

  
  \begin{proof}
  	This inequality is true for $n=3,4,5$. Note that
  	$$\binom{n^2}{n}=\frac{(n^2 -(n-1))(n^2-(n-2)) \cdots (n^2-1)(n^2-0) }{(n  -(n-1))(n -(n-2)) \cdots (n-1)(n -0) } =n\prod_{k=1} ^{n-1} \frac{n^2-k}{n-k}.$$ 
  	Since  $\frac{n^2 -k}{n-k} >n $ for all $n> k \geq 1$, $ \binom{n^2}{n}> n^n$.  For $n>6, n^n> 6^n > 6^{n-1}$ and this completes the proof. 
  \end{proof}

  \begin{lemma}
  	If $a$ and $b$ are integers different from $\pm 1$ such that $gcd (a, b) = 1$, then $a^n$ does not divide $b^n$ for any natural number $n$.
  	\label{division}
  \end{lemma}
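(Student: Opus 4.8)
The plan is to argue by contradiction using the standard fact that coprimality is preserved under taking powers, combined with the observation that any common divisor of two integers divides their greatest common divisor. First I would dispose of the degenerate cases: if $a=0$ then $\gcd(a,b)=|b|=1$ forces $b=\pm 1$, contradicting the hypothesis, and symmetrically $b=0$ is impossible; hence both $a$ and $b$ are nonzero, so $a^n\neq 0$ and the divisibility statement $a^n\mid b^n$ is meaningful.

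The central step is to show $\gcd(a^n,b^n)=1$. I would deduce this from $\gcd(a,b)=1$ via unique factorization: since $a$ and $b$ share no common prime, neither do $a^n$ and $b^n$, as the prime factorizations of $a^n$ and $b^n$ use exactly the primes dividing $a$ and $b$ respectively. (Alternatively one can iterate Bézout's identity, but the prime-factorization route is cleanest.) With this in hand, suppose for contradiction that $a^n\mid b^n$ for some natural number $n$. Then $a^n$ is a common divisor of $a^n$ and $b^n$, and since every common divisor divides the greatest common divisor, $a^n\mid\gcd(a^n,b^n)=1$. This forces $|a|^n=1$, hence $|a|=1$, i.e.\ $a=\pm 1$, contradicting the assumption that $a\neq\pm 1$.

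There is no serious obstacle here; the only point requiring a little care is the justification of $\gcd(a^n,b^n)=1$ from $\gcd(a,b)=1$, which is the one nontrivial number-theoretic input and which I would state as a consequence of the fundamental theorem of arithmetic. Everything else is a short chain of elementary divisibility implications. I would present the argument in just a few lines, emphasizing the divisor-of-gcd reduction to $a^n\mid 1$ as the key collapse that yields the contradiction.
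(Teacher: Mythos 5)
Your proof is correct, but it follows a different route from the paper's. The paper argues directly by an iterated application of the (generalized) Euclid's lemma: from $a^n \mid b^n$ it first extracts $a \mid b^n = b\cdot b^{n-1}$, and since $\gcd(a,b)=1$ concludes $a \mid b^{n-1}$; repeating this peels off one factor of $b$ at a time until $a \mid b$, which contradicts $\gcd(a,b)=1$ together with $a \neq \pm 1$. You instead prove the stronger intermediate fact $\gcd(a^n,b^n)=1$ via unique factorization and then collapse $a^n \mid b^n$ to $a^n \mid 1$. Both arguments are sound; yours buys a one-step conclusion once coprimality of the powers is established, and it explicitly disposes of the degenerate cases $a=0$ and $b=0$ (which the paper leaves implicit, though they are vacuous under the hypotheses). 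The paper's version buys economy of tools: it needs only the coprime divisibility lemma (a consequence of B\'ezout) rather than the full fundamental theorem of arithmetic, at the cost of an induction-style ``proceeding in this way'' step. Either proof serves the application in Theorem~\ref{linesegment} equally well.
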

  \begin{proof}
  	On the contrary, let us assume that $a^n$ divides $b^n$ for some natural number $n$. Then there exists an integer \(c \) such that \(b^n = c a^n = c a^{n-1}a\). This implies that \(a\) divides \(b^n \). In other words, \(a\) divides \( b\cdot b^{n-1}\). By Euclid's lemma, \(a\) divides \(b^{n-1}\). Proceeding in this way, we get that \(a\) divides \(b\), which is a contradiction. Thus, $a^n$ does not divide $b^n$ for any natural number $n$.
  \end{proof}
  \begin{proof}[Proof of Theorem~\ref{linesegment}]
  	Let $G$ be a graph and $P(z)=a_1 z +a_2 z^2+\cdots +a_n z^n$ be its reduced independence polynomial. Since $\mathcal{A}(G)$ is a line segment, it follows from Theorem~\ref{attractor=fractal} and Theorem~\ref{indfractal=Juliaset} that the Julia set of $P$ is a line segment. By Lemma \ref{real_points}, this line segment contains $0$ and is symmetric with respect to the real line. This leads to only two possibilities:  the Julia set $\mathcal{J}$ is a subset  of the imaginary axis or of the real axis.
  	
  	\par 
  	If  $\mathcal{J}$ is a subset of the imaginary axis then $0$ has to be its mid-point by its symmetry with respect to the origin. If it is the segment $[-i y_0,~ i y_0]$ for some $y_0 >0$, then consider $\phi(z) = \frac{i}{y_0}z$, and note that $\phi(\mathcal{J})=[-1,1]$.  Now the interval  $[-1,1]$ is completely invariant under $\phi P\phi^{-1}$, which is a polynomial with degree $n$. By Theorem 1.4.1 of \cite{beardon}, $\phi P\phi^{-1}=T_n$ or $-T_n$ where $T_n $ is the  Chebyshev polynomial with degree $n$. Now $\phi (P(z))=\pm T_n(\phi(z))$ implies that $\frac{i}{y_0}P(z)=\pm T_n(\frac{i}{y_{0}}z)$. Since the coefficient of $z^{n-1}$ in $\pm T_n$ is zero (by Lemma~\ref{derivatives} \ref{coefficients}) we have $a_{n-1} =0$. However, this cannot be true. Therefore, $\mathcal{J} \subset \mathbb{R}$ and it follows from Lemma \ref{real_points} that $\mathcal{J}= [-r,0]$ for some $r>0$.	 
  	
  	\par 
  	Now choose $\phi(z)=az+1$ where $a=\frac{2}{r}>0$ so that $\phi(\mathcal{J}) =[-1,1]$. Then the interval  $[-1,1]$ is completely invariant under $\phi P\phi^{-1}$, which is a polynomial with degree $n$. By Theorem 1.4.1  appearing in \cite{beardon}, $\phi P\phi^{-1}=-T_n$ or $T_n$ where $T_n $ is the Chebyshev polynomial with degree $n$.
  	If $\phi P =-T_n \phi$ then $ \phi(0)=-T_n(\phi(0))$ which cannot be true by Lemma~\ref{derivatives} \ref{atone}. Therefore, 
  	\begin{equation}
  		\phi P =T_n \phi.
  		\label{conju}
  	\end{equation}   
  	
  	Since $\phi(0)=1$, it follows from Lemma~\ref{derivatives}\ref{atone} that  $a_1 =T_n '(1)=n^2$ and 
  	\begin{equation}
  		a_2 =\frac{1}{2} T_n ''(1)a= \frac{1}{2} \frac{2}{3} \binom{n^2}{2}a.
  		\label{a2}
  	\end{equation}
  	Further, since $a_2 \leq \binom{n^2}{2}$, $ a \in (0,3] $. Since  $T_n = \phi P \phi ^{-1}$, we get
  	\begin{align*}
  		T_n(z)= a \left(a_1 \left(\frac{z-1}{a}\right) + \cdots + a_n \left(\frac{z-1}{a}\right)^n \right) + 1.
  	\end{align*}
  	The leading coefficient of $T_n$ is $2^{n-1}$ (Lemma~\ref{derivatives}\ref{coefficients}). 
  	This gives $a_n=2^{n-1}a^{n-1}$. Note that $a=3$  if and only if $a_2=\binom{n^2}{2}$ meaning that there is no edge in the graph. This gives $a_n =\binom{n^2}{n}$. On the other hand, $a_n=2^{n-1}3^{n-1}=6^{n-1}$. But this is ruled out by Lemma~\ref{anidentity}. 
  	Therefore, $a \in (0,3)$.
  	\par 
  	It follows from Equation~(\ref{a2}) that $a$ is a positive rational number, say $\frac{p}{q}$ with $\gcd(p,q)=1$. 
  	Then $a_n=(\frac{2p}{q})^{n-1} $. Since $a_n$ is a natural number, $q^{n-1}$ must divide $2^{n-1}p^{n-1}$. By Lemma~\ref{division}, $q^{n-1}$ divides $2^{n-1}$ and so the only possible value of $q$ is $1$ or $2$. This implies that $a \in \{\frac{1}{2}, 1, \frac{3}{2},2, \frac{5}{2} \}$. Therefore, $P(z)=\phi^{-1} T_n \phi(z)$ where $\phi(z)=az +1$ and $a \in \{\frac{1}{2}, 1, \frac{3}{2}, 2, \frac{5}{2}  \} $. It follows from  Lemma~\ref{knot5} that $a \neq \frac{5}{2}$.  Moreover, $J=[-r, 0] = [-\frac{2}{a}, 0]$. Replacing $a$ by $\frac{k}{2}$, we get that the independence fractal is $[-\frac{4}{k},0]$ for some $k \in \{1,2,3,4\}$.  
  	%      
  	%    \begin{enumerate}[label = (\roman{*})]
  		%    	\item Let the graph $G$ has independence number $3$. Then $P_G(z) = 9z + 30z^2 + 25z^3$. The complement of $G$, denoted by $G^c$ is a graph with $9$ vertices and $30$ edges, and it does not contain $K_4$ as a subgraph. However, by Theorem \ref{turan}, \(G^c\) can have at most \(27\) edges. This is a contradiction giving that no such graph $G$ exists.
  		%    	
  		%    	\item Let the graph $G$ has independence number $4$. Then $P_G(z) = 16z + 100z^2 + 200z^3 + 125z^4$. The graph $G^c$ has $16$ vertices, $100$ edges and does not contain $K_5$ as a subgraph. However, by Theorem \ref{turan}, \(G^c\) can have at most \(96\) edges. This is a contradiction giving that no such graph $G$  exists.
  		%    	
  		%    	\item Let the graph $G$ has independence number $5$. Then $P_G(z) = 25z + 250z^2 + 875z^3 + 1250z^4 + 625z^5$. The graph $G^c$ has $25$ vertices, $250$ edges and does not contain $K_6$ as a subgraph. By Theorem \ref{turan}, \(G^c\) can have at most \(250\) edges. Thus, $G^c$ can only be the graph $T(25, 5)$. So, the only possibility of $G$ is the complement of $T(25,5)$, which is the disjoint union of $5$ copies of $K_5$. The independence polynomial of this graph is $(1+5z)^5$, which is not equal to $P_G(z)$. Consequently, there does not exist any graph with independence number $5$ whose independence fractal is  $[-\frac{4}{5}, 0]$.
  		%    \end{enumerate} 
  \end{proof}

  \begin{rem} 
    For a given $n$, there are four polynomials $ \phi_{k}^{-1} T_n \phi_{k} (z), k=1,2,3,4$ where $\phi_{k}(z)=\frac{k}{2}z+1$ that are the possible reduced independence polynomials of some graphs whose independence attractor is a line segment. The Julia set of $\phi_{k}^{-1} T_n \phi_{k}$, which is the independence attractor of the graph, is $[-\frac{4}{k},0]$.
  		\begin{enumerate}[label=(\roman{*})]
  			\item  The coefficient of $z$, equivalently the number of vertices of the graph is $n^2$, which is independent of $k$.
  			\item The coefficient of $z^2$, equivalently the number of $2$-independent sets is $\frac{k}{12}n^2(n^2-1)$ where $k \in \{1,2,3,4\}$.
  			
  		\end{enumerate}
  	 
   \label{indpoly}
  \end{rem}

 \section{Examples: Graphs with independence number four}
 \label{examples}
It follows from Remark~\ref{indpoly} and  Lemma~\ref{poly} that 
 the reduced independence polynomials of graphs with independence number four whose independence attractor is a line segment are  $\leftindex^k{P}_G(z)=16z+20kz^2+8k^2z^3+k^3z^4$ for $k\in \{1,2,3,4\}$. The corresponding independence polynomial is 
 \begin{equation}
 	 \leftindex^k{I}_G(z) = 1+16z+20kz^2+8k^2z^3+k^3z^4.
 	 \label{indpoly-form}
 \end{equation}
  We first look for possible disconnected graphs with \(\leftindex^k{I}_G(z)\) as its independence polynomial.

If $G \cup H$ denotes the disjoint union of two graphs $G$ and $H$ then $I_{G\cup H}(z)=I_G(z)I_H(z)$. In general, the independence polynomial of a disconnected graph is the product of the independence polynomials of its components (Theorem 3.0.12, \cite{hickman2001}). We first find all the factorizations of the independence polynomials \(\leftindex^k{I}_G(z)\)  into polynomials with positive integer coefficients. Each factor is a possible independence polynomial for a component graph. Finally, we check the existence of graphs with the factor as the independence polynomial.
A disconnected graph \(G\) with independence number four has at most four components. The following proposition is a more detailed statement.
\begin{proposition}
	If $G$ is a disconnected graph with independence number four such that $ \mathcal{A}(G)$ is a line segment then it has at most three components. 
	\begin{enumerate}
		\item If the graph has three components then its independence polynomial is $(1+z)(1+3z)(1+12z+9z^2)$.
		\item If the graph has two components then its independence polynomial is $(1+4z+3z^2)(1+12z+9z^2)$, $(1+8z+8z^2)(1+8z+8z^2)$,  $(1+z)(1+15z+45z^2+27z^3)$ or $(1+3z)(1+13z+21z^2+9z^3)$.
	\end{enumerate}
	Further, the line segment is $[-1,0]$ if the independence  polynomial is  $(1+8z+8z^2)(1+8z+8z^2)$ and it is $[-\frac{4}{3},0]$ otherwise.
\label{disconnectedgraphs}
\end{proposition}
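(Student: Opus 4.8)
The plan is to combine two facts. First, by Theorem~\ref{linesegment} together with Remark~\ref{indpoly} and Equation~(\ref{indpoly-form}), a graph with independence number four whose independence attractor is a line segment has independence polynomial $\leftindex^k{I}_G(z)=1+16z+20kz^2+8k^2z^3+k^3z^4$ for some $k\in\{1,2,3,4\}$, and the line segment is $[-\frac{4}{k},0]$. Second, the independence polynomial of a disconnected graph is the product of the independence polynomials of its components. Thus, if $G$ is disconnected with components $G_1,\dots,G_c$ ($c\geq 2$), then $\leftindex^k{I}_G=\prod_{i=1}^{c}I_{G_i}$, where each $I_{G_i}$ is nonconstant, has positive integer coefficients and constant term $1$. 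Since factorization into irreducibles over $\mathbb{Q}$ is unique, the component polynomials are obtained by grouping the irreducible factors of $\leftindex^k{I}_G$, and in particular $c$ is at most the number of such factors counted with multiplicity. The proposition therefore reduces to factoring each of the four polynomials $\leftindex^k{I}_G$.

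To factor them I would use the Chebyshev conjugacy of Equation~(\ref{conju}) with $n=4$. Writing $w=\frac{k}{2}z+1$, the identity $\frac{k}{2}\leftindex^k{I}_G(z)=T_4(w)+\frac{k}{2}-1$ shows that the roots of $\leftindex^k{I}_G$ correspond to the solutions of $T_4(w)=1-\frac{k}{2}$, i.e.\ of $16w^4-16w^2+k=0$, so that $w^2=\frac{2\pm\sqrt{4-k}}{4}$. For $k=3$ this yields the rational values $w=\pm\frac12$, giving the roots $z=-1$ and $z=-\frac13$ (hence the factors $1+z$ and $1+3z$), while the remaining pair forms $1+12z+9z^2$, which is irreducible as its discriminant $108$ is not a perfect square; thus $\leftindex^3{I}_G=(1+z)(1+3z)(1+12z+9z^2)$. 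For $k=4$ the discriminant $4-k$ vanishes, so the two roots are double and $\leftindex^4{I}_G=(1+8z+8z^2)^2$ with $1+8z+8z^2$ irreducible (discriminant $32$). For $k=1$ and $k=2$ there are no rational roots, so $\leftindex^k{I}_G$ has no linear factor $1+nz$; a short comparison of coefficients then excludes a factorization into two quadratics $(1+bz+cz^2)(1+b^{\prime}z+c^{\prime}z^2)$, whose leading coefficients must multiply to $1$ (for $k=1$) or to $8$ (for $k=2$), leaving only finitely many cases, none of which matches. Hence $\leftindex^1{I}_G$ and $\leftindex^2{I}_G$ are irreducible, so no disconnected graph occurs for $k=1,2$.

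It remains to read off the component decompositions. For $k=4$ the only factorization of $(1+8z+8z^2)^2$ into at least two positive-integer-coefficient factors with constant term $1$ is $(1+8z+8z^2)(1+8z+8z^2)$, so the graph has exactly two components and the segment is $[-1,0]$. For $k=3$ the three irreducible factors each have multiplicity one, so a disconnected graph corresponds to a partition of $\{1+z,\,1+3z,\,1+12z+9z^2\}$ into $c\geq 2$ nonempty groups: the singleton partition gives the three-component case $(1+z)(1+3z)(1+12z+9z^2)$, while the three partitions into a singleton and a pair give the two-component polynomials $(1+4z+3z^2)(1+12z+9z^2)$, $(1+z)(1+15z+45z^2+27z^3)$ and $(1+3z)(1+13z+21z^2+9z^3)$, all with segment $[-\frac43,0]$. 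In every case $c\leq 3$, which proves the proposition.

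The main obstacle is the irreducibility of $\leftindex^1{I}_G$ and $\leftindex^2{I}_G$: ruling out linear factors is immediate from the root computation, but excluding a product of two quadratics genuinely requires the explicit coefficient comparison rather than the root analysis. A secondary point needing care is the completeness of the enumeration for $k=3$, which relies on unique factorization and on each irreducible factor having multiplicity one, so that partitions of the factor multiset exhaust all possible component decompositions. Note that, for the necessity asserted here, one need not verify that each listed factor is realizable by a connected graph; that realizability is addressed separately when the examples are constructed.
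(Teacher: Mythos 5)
Your proposal is correct, but it proves the proposition by a genuinely different route from the paper. The paper never factors the quartics $1+16z+20kz^2+8k^2z^3+k^3z^4$ directly; instead it fixes the number of components ($4$, $3$, or $2$, the last split into two sub-cases by the independence numbers of the components), writes the product of component polynomials with unknown coefficients $n_i, m_j$, and matches coefficients against $1+16z+20kz^2+8k^2z^3+k^3z^4$ for each $k\in\{1,2,3,4\}$, discharging the resulting Diophantine systems through an exhaustive table-by-table search. Your approach instead exploits the conjugacy $\tfrac{k}{2}P_G(z)+1=T_4\bigl(\tfrac{k}{2}z+1\bigr)$ to solve $16w^4-16w^2+k=0$ explicitly, obtains the irreducible factorization of each quartic over $\mathbb{Q}$ (three factors for $k=3$, a repeated irreducible quadratic for $k=4$, irreducible for $k=1,2$), and then enumerates component decompositions as partitions of the multiset of irreducible factors, using unique factorization and the normalization that every component polynomial has constant term $1$. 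This is shorter and more conceptual: the bound of three components and the non-existence for $k=1,2$ fall out of the factor count rather than from separate contradictions, and the seven tables collapse to one root computation plus a two-quadratic coefficient check. What the paper's method buys in exchange is that it is entirely elementary (no appeal to the Chebyshev normal form beyond what Theorem~\ref{linesegment} already provides) and its tables record exactly the candidate tuples $(m,n_1,n_2,n_3)$ that are reused later when realizability by actual graphs is examined. Two points in your write-up deserve the care you already flag: the exclusion of a quadratic-times-quadratic splitting for $k=1,2$ must be done by coefficient comparison (e.g.\ for $k=1$ one gets $c=c'=1$ and then $b+b'=16$ against $bc'+b'c=8$, a contradiction; similarly for $k=2$ each choice of $(c,c')$ with $cc'=8$ fails), and the passage from irreducible factors to component polynomials needs the observation that the constant terms force all rational unit factors to be $1$. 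Both are routine, and your conclusion matches the proposition exactly.
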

\begin{proof}  If \(G\) has four components \(G_i,\) with $n_i$ vertices for \(i = 1, 2, 3, 4\) then the independence polynomial of each \(G_i\) must be linear and,
\begin{align*}
    \leftindex^k{I}_G(z) &= (1+n_1z)(1+n_2z)(1+n_3z)(1+n_4z)\\
    &= 1 + (n_1+n_2+n_3+n_4)z + (n_1n_2+ n_1n_3+ n_1n_4+ n_2n_3+ n_2n_4+ n_3n_4)z^2 \\ &+ (n_1n_2n_3+ n_1n_2n_4+ n_1n_3n_4+ n_2n_3n_4)z^3 + n_1n_2n_3n_4z^4.
\end{align*}
Since, $\leftindex^k{I}_G(z) = 1+16z+20kz^2+8k^2z^3+k^3z^4$, every set of possible values of \((n_1, n_2, n_3, n_4)\) must satisfy \(n_1+n_2+n_3+n_4 = 16,~ n_1n_2+ n_1n_3+ n_1n_4+ n_2n_3+ n_2n_4+ n_3n_4 = 20k,~ n_1n_2n_3+ n_1n_2n_4+ n_1n_3n_4+ n_2n_3n_4 = 8k^2~ \text{and}~ n_1n_2n_3n_4 = k^3\) 
for some \(k \in \{1, 2, 3, 4\}\). Clearly, there is no value of \((n_1, n_2, n_3, n_4)\) satisfying $n_1 +n_2 +n_3 +n_4 =16$ and $n_1 n_2 n_3 n_4 =k^3$  simultaneously for any $k=1,2,3,4$. Therefore, \(G\)   has at most three components.
\begin{enumerate}
	\item 
If \(G\) has three components \(G_i \) with \(n_i\) vertices for \(i = 1, 2, 3\), then two of them say, \(G_1\) and \(G_2\) have independence number one and \(G_3\) has independence number two. Let \(\leftindex^k{I}_{G_1}(z) = 1+n_1z\), \(\leftindex^k{I}_{G_2}(z) = 1+n_2z\) and \(\leftindex^k{I}_{G_3}(z) = 1+n_3z+mz^2\)  for some natural number \(m\). Then, 
\begin{align*}
\leftindex^k{I}_G(z) &= (1+n_1z)(1+n_2z)(1+n_3z+mz^2) \\
&= 1 + (n_1+n_2+n_3)z + (n_1n_2+n_1n_3+n_2n_3+m)z^2\\
&+ (n_1n_2n_3+mn_1+mn_2)z^3 + mn_1n_2z^4.
\end{align*}
Every set of possible values of \((m,n_1,n_2,n_3)\) must satisfy  \(n_1+n_2+n_3 = 16,~ n_1n_2+n_1n_3+n_2n_3+m = 20k,~ n_1n_2n_3+mn_1+mn_2 = 8k^2,~ \text{and}~ mn_1n_2 = k^3\)
for some \(k \in \{1, 2, 3, 4\}\) (see Equation(\ref{indpoly-form})).
\par
For \(k=1\),   \((m, n_1, n_2) = (1,1,1)\) (as $mn_1 n_2 =1$). This implies that \(n_3=14\) and therefore $n_1 n_2 n_3 +mn_1 +mn_2 =16$. But this ought to be $8$.

 For the remaining values of \(k\), we first list all the values of \((m, n_1, n_2)\)  satisfying  $mn_1 n_2=k^3$, discarding the cases where any of \(n_1\) or \(n_2\) is greater than or equal to \(16\). Then we find $n_3$ using $n_1 +n_2 +n_3 =16$ and finally we check if the other equations are satisfied  or not. This is done for $k=2,3,4$ in Tables \ref{3components-k=2}, \ref{3components-k=3}, \ref{3components-k=4} respectively.
\par  For $k =2$, $n_1n_2n_3 +mn_1 +mn_2 = 32$. But it is clear from Table \ref{3components-k=2} that  this is not possible. 
% For k = 2
\begin{table}[h!]
\centering
    \caption{Possible values of $m, n_1, n_2$ and $n_3$ for $k=2$: $n_1n_2n_3 + mn_1 +  mn_2 =32$.}
    \begin{tabular}{|c|c|c|c|c|c|}
    \hline
      $m$ & $n_1$ & $n_2$ & $n_3 =16-n_1 -n_2$ & $n_1n_2n_3 + mn_1 +  mn_2$ &   Independence polynomial  \\
      \hline
       8 & 1 & 1 & 14 & 30 &  Not possible\\    \hline
       1 & 8 & 1 & 7  & 65 &  Not possible\\    \hline
       1 & 1 & 8 & 7  & 65 &  Not possible\\    \hline
       1 & 4 & 2 & 10  & 86 & Not possible\\    \hline
       1 & 2 & 4 & 10 & 86 & Not possible\\    \hline
       4 & 1 & 2 & 13 & 38 & Not possible\\    \hline
       2 & 1 & 4 & 11 & 54 &  Not possible\\    \hline
       4 & 2 & 1 & 13 & 38 &  Not possible\\    \hline
       2 & 4 & 1 & 11 & 54 &  Not possible\\    \hline
       2 & 2 & 2 & 12 & 56 &  Not possible\\   \hline
    \end{tabular}
    \label{3components-k=2}
\end{table}
\FloatBarrier

For $k=3, n_1 n_2 n_3+mn_1 +mn_2 =72$ and $n_1n_2 + n_1n_3 + m = 60$. Table ~\ref{3components-k=3} shows that this is possible only  when $(m,n_1,n_2,n_3) \in \{(9,1,3,12), (9,3,1,12) \}$. The resulting independence polynomial is $(1+z)(1+3z)(1+12z +9z^2).$
%For k = 3
\begin{table}[h!]
\centering
    \caption{Possible values of $m, n_1, n_2$ and $n_3$ for $k=3$: $n_1n_2n_3 +mn_1 +mn_2=72 $ and $n_1n_2 + n_1n_3 + n_2n_3 + m =60$.}
    \begin{tabular}{|c|c|c|c|c|c|c|}
    \hline
      $m$ & $n_1$ & $n_2$ & $n_3=16$ & $n_1n_2n_3 + $ & $n_1n_2 + n_1n_3 + $ & Independence \\
          &      &        & $-n_1 -n_2$   &   $ mn_1 +mn_2$  & $ n_2n_3 + m$  &  polynomial \\
      \hline
      27 & 1 & 1 & 14 & 68 & - & Not possible\\
      \hline
      1 & 3 & 9 & 4  & 120 & - & Not possible\\
         \hline
       1 & 9 & 3 & 4  & 120 & - & Not possible\\
          \hline
       3 & 1 & 9 & 6 & 84 & - & Not possible\\
          \hline
       9 & 1 & 3 & 12 & 72 & 60 & $(1+z)(1+3z)$\\
         &   &    &   &    &    & $(1+12z+9z^2)$\\ 
            \hline
       3 & 9 & 1 & 6 & 84 & - & Not possible\\
          \hline
       9 & 3 & 1 & 12 & 72 & 60 & $(1+3z)(1+z) $\\
             &   &    &   &    &    & $(1+12z+9z^2)$\\ 
                \hline
       3 & 3 & 3 & 10 & 108 & - & Not possible\\
       \hline
    \end{tabular}
    \label{3components-k=3}
\end{table}
%\FloatBarrier
  For $k = 4 $, we have $n_1 n_2 n_3 +mn_1 +mn_2 =128$ and $n_1n_2 + n_1 n_3+ n_2n_3 + m=80$. As evident from Table~\ref{3components-k=4}, this is never possible. 
  \begin{table}[h!]
\centering
    \caption{Possible values of $m, n_1, n_2$ and $n_3$ for $k=4$: $n_1 n_2 n_3 +mn_1 +mn_2 =128$ and $n_1n_2 + n_1 n_3+ n_2n_3 + m=80$.}
    \begin{tabular}{|c|c|c|c|c|c|c|}
    \hline
      $m$ & $n_1$ & $n_2$ & $n_3 =16$ & $n_1n_2n_3 + mn_1 $ & $n_1n_2 + n_1n_3 $ &  Independence \\
          &      &       &  $-n_1 -n_2$  & $+ mn_2$   & $+ n_2n_3 + m$   & polynomial \\
      \hline
       64 & 1 & 1 & 14 & 142 & - & Not possible\\ \hline
       %1 & 64 & 1 & -  & - & - & Not permissible\\
       %1 & 1 & 64 & -  & - & - & Not permissible\\
       32 & 2 & 1 & 13  & 122 & - &  Not possible\\ \hline
       %2 & 32 & 1 & -  & - & - & Not permissible\\
       %2 & 1 & 32 & - & - & - & Not permissible\\
       32 & 1 & 2 & 13 & 122 & - &  Not possible\\ \hline
       %1 & 32 & 2 & - & - & - & Not permissible\\
       %1 & 2 & 32 & - & - & - & Not permissible\\
       16 & 4 & 1 & 11 & 124 & - &  Not possible\\ \hline
       %4 & 16 & 1 & - & - & - & Not permissible\\
       %1 & 16 & 4 & -  & - & - & Not permissible\\
       %1 & 4 & 16 & -  & - & - & Not permissible\\
       %4 & 1 & 16 & -  & - & - & Not permissible\\
       16 & 1 & 4 & 11  & 124 & - &  Not possible\\ \hline
       8 & 8 & 1 & 7 & 128 & 79 &  Not possible\\ \hline
       8 & 1 & 8 & 7 & 128 & 79 &  Not possible\\ \hline
%       8 & 1 & 1 & 14 & 30 & - & Not permissible\\ \hline
       16 & 2 & 2 & 12 & 112 & - &  Not possible\\ \hline
       %2 & 16 & 2 & - & - & - & Not permissible\\
       %2 & 2 & 16 & - & - & - & Not permissible\\
       8 & 4 & 2 & 8  & 112 & - &  Not possible\\ \hline
       8 & 2 & 4 & 8  & 112 & - &  Not possible\\ \hline
       4 & 8 & 2 & 6  & 136 & - &  Not possible\\ \hline
       2 & 8 & 4 & 4  & 152 & - &  Not possible\\ \hline
       2 & 4 & 8 & 4 & 152 & - &  Not possible\\ \hline
       4 & 2 & 8 & 6 & 136 & - &  Not possible\\ \hline
       4 & 4 & 4 & 8 & 160 & - &  Not possible\\ \hline
    \end{tabular}
    \label{3components-k=4}
\end{table}
%\FloatBarrier

\item If \(G\) has two components \(G_1\) and \(G_2\) with \(n_1\) and \(n_2\) vertices respectively, then there are two possibilities:  both of them has independence number two, or one has independence number one and the other has three.

\par \underline{Case (A)}:  Let the independence number of $ G_1 $ and $G_2$ be two. Also, let \(\leftindex^k{I}_{G_1}(z)=1+n_1z+m_1z^2\) and \(\leftindex^k{I}_{G_2}(z)=1+n_2z+m_2z^2\)  for some natural numbers \(m_1\) and \(m_2\). Then
$ \leftindex^k{I}_G(z) = (1+n_1z+m_1z^2)(1+n_2z+m_2z^2) = 1 + (n_1+n_2)z + (n_1n_2+m_1+m_2)z^2 + (n_1m_2+n_2m_1)z^3 + m_1m_2z^4$. Every set of possible values of \( (m_1, m_2, n_1, n_2)\) must satisfy  \(n_1+n_2 = 16, ~ n_1n_2+m_1+m_2 = 20k, ~ n_1m_2+ n_2m_1 = 8k^2 ~\text{and} ~ m_1m_2 = k^3\)
for some \(k \in \{1, 2, 3, 4\}\) (see Equation~(\ref{indpoly-form})). For each value of \(k\), we first list all the values of \((m_1, m_2)\) such that $m_1 m_2 =k^3$. After that we find $n_1, n_2$ using $n_1 +n_2 =16$ and $n_1 m_2 +n_2 m_1 =8k^2$ and at last, check whether $n_1 n_2 +m_1 +m_2=20k$ is satisfied or not.
\par
For \(k=1\), the only value of \((m_1, m_2)\) is \((1,1)\). But for this, $n_1 m_2 + n_2 m_1 =8 k^2$ contradicts $n_1 +n_2 =16$. For $k = 2,3,4$, it follows from $n_1 +n_2 =16$ and $n_1 m_2 + n_2 m_1=8k^2$ that $n_1 =\frac{8k^2- 16 m_1}{m_2 -m_1}$ whenever $m_1 \neq m_2$. For $k=2$, $(m_1, m_2) \in \{(1,8), (8,1), (2,4), (4,2)\}$ and the only positive integral value of $n_1$ is $16$. But this is not possible as $n_2 >0$.
\par Similarly, for $k=3$,  all the possible values of $m_1, m_2, n_1, n_2$ are provided in Table~\ref{2components-equalindnumb-k=3}. The only positive integral values of $n_1 $ are $12$ and $4$ (see the last two rows). Consequently, $(n_1, n_2) \in \{(12,4), (4,12)\}$ and $n_1 n_2 +m_1 +m_2=60$ is also satisfied. Therefore, a possible independence polynomial of \(G\) is $(1+12z+9z^2)(1+4z+3z^2)$.
%\begin{table}[h!]
%\centering
%    \caption{Possible values of $m_1, m_2, n_1$ and $n_2$ for $k=2$}
%    \begin{tabular}{|c|c|c|c|c|c|}
%    \hline
%      $m_1$ & $m_2$ & $n_1=$ & $n_2$ & $n_1n_2 + m_1$ &Independence  \\
%       		 & 		 & $\frac{8k^2- 16 m_1}{m_2 -m_1}$ &  & $ + m_2=40$ & polynomial \\
%      \hline
%       8 & 1 & $\frac{96}{7}$ & - & - & Not permissible\\ [0.25cm]
%       1 & 8 & $\frac{16}{7}$ & - & - & Not permissible\\ [0.25cm]
%       2 & 4 & 0 & -  & - & Not permissible\\
%       4 & 2 & 16 & 0  & - & Not permissible\\
%       \hline
%    \end{tabular}
%    \label{Table4}
%\end{table}
%\FloatBarrier

% For k = 3
\begin{table}[h!]
\centering
    \caption{Possible values of $m_1, m_2, n_1$ and $n_2$ for $k=3$: $ n_1 n_2 +m_1 +m_2 =60$.}
    \begin{tabular}{|c|c|c|c|c|c|}
    \hline
      $m_1$ & $m_2$ & $n_1 =\frac{72- 16 m_1}{m_2 -m_1}$ & $n_2$ & $n_1n_2 + m_1 + m_2$ & Independence  polynomial \\
 
      \hline
       1 & 27 & $\frac{28}{13}$ & - & - & Not possible\\ [0.25cm]\hline
       27 & 1 & $\frac{180}{13}$ & - & - & Not possible\\ [0.25cm] \hline
       9 & 3 & 12 & 4  & 60 & $(1+12z+9z^2)(1+4z+3z^2)$\\ \hline
       3 & 9 & 4 & 12 & 60 & $(1+4z+3z^2)(1+12z+9z^2)$\\\hline
      \end{tabular}
    \label{2components-equalindnumb-k=3}
\end{table}
\FloatBarrier
 \par For $k = 4$, we have $(m_1, m_2) \in \{(1, 64), (64,1), (32,2), (2,32), (16,4), (4,16), (8,8)\}$. As evident from Table~\ref{2components-equalindnumb-k=4}, the only possible positive integral value of $n_1$ is $8$. Consequently,  $n_2 =8$ and, a possible independence polynomial of \(G\) is $(1+8z+8z^2)(1+8z+8z^2)$.
\begin{table}[h!]
\centering
    \caption{Possible values of $m_1, m_2, n_1$ and $n_2$ for $k=4$: $n_1n_2 + m_1 +m_2 =80$.}
    \begin{tabular}{|c|c|c|c|c|c|}
    \hline
      $m_1$ & $m_2$ & $n_1 =\frac{128-16 m_1}{m_2 -m_1}, $ & $n_2$ & $n_1n_2 + m_1 +m_2$ & Independence polynomial \\
          &   & $  m_1 \neq m_2 $ &   &   &  \\
      \hline
       1 & 64 & $\frac{16}{9}$ & - & - & Not possible\\ [0.25cm] \hline
       64 & 1 & $\frac{128}{9}$ & - & - & Not possible\\ [0.25cm] \hline
       32 & 2 & $\frac{64}{5}$ & - & - & Not possible\\ [0.25cm] \hline
       2 & 32 & $\frac{16}{5}$ & - & - & Not possible\\ [0.25cm] \hline
       16 & 4 & $\frac{32}{3}$ & - & - & Not possible\\ [0.25cm] \hline
       4 & 16 & $\frac{16}{3}$ & - & - & Not possible\\ [0.25cm] \hline
       8 & 8 & 8 & 8  & 80 & $(1+8z+8z^2)(1+8z+8z^2)$\\
       \hline
    \end{tabular}
    \label{2components-equalindnumb-k=4}
\end{table}
\FloatBarrier
\par \underline{Case (B)}: 
Let  the independence numbers of $G_1$ and $G_2$ be one and  three respectively. Then \(\leftindex^k{I}_{G_1}(z) = 1+n_1z\) and let \(\leftindex^k{I}_{G_2}(z) = 1+ n_2z + m_1z^2 + m_2z^3\)  for some natural numbers \(m_1\) and \(m_2\), and  
    $\leftindex^k{I}_G(z) = (1+n_1z)(1+n_2z+m_1z^2+m_2z^3) = 1 + (n_1+n_2)z + (n_1n_2+m_1)z^2 + (n_1m_1+m_2)z^3 + n_1m_2z^4$. Every set of possible values of \((m_1, m_2, n_1, n_2) \) must satisfy \(n_1+n_2 = 16,~ n_1n_2+m_1 = 20k,~ n_1m_1+ m_2 = 8k^2 ~ \text{and}~ n_1m_2 = k^3\)
for some \(k \in \{1, 2, 3, 4\}\). For each value of \(k\), we first list all the values of \((n_1, m_2)\) such that $n_1 m_2=k^3$ discarding the cases where \(n_1\geq 16\). Then we find $n_2$ and $m_1$ using $n_1 +n_2 =16$ and $n_1 n_2 +m_1=20k$. Finally, we check whether $ n_1m_1+ m_2 = 8k^2$ is satisfied or not.
\par
For \(k=1\), the only values of \((n_1, m_2)\) is \((1,1)\). This gives \(n_2=15\) and \(m_1=5\), which contradicts $n_1 m_1 +m_2 =8$. For $k=2$, it is seen from Table~\ref{3components-unequal-k=2} that $n_1m_1 + m_2=32$ is not possible.
%For $k = 2$
\begin{table}[h!]
\centering
    \caption{Possible values of $m_1, m_2, n_1$ and $n_2$ for $k=2$: $n_1m_1 + m_2=32$. }
    \begin{tabular}{|c|c|c|c|c|c|}
    \hline
      $n_1$ & $m_2$ & $n_2=16-n_1$ & $m_1 =40-n_1 n_2$ & $n_1m_1 + m_2 $ & Independence polynomial \\
      \hline
       8 & 1 & 8 & -24 & - & Not possible\\ \hline
       1 & 8 & 15 & 25 & 33 & Not possible\\ \hline
       4 & 2 & 12 & -8  & - & Not possible\\ \hline
       2 & 4 & 14 & 12  & 28 & Not possible\\ \hline
    \end{tabular}
    \label{3components-unequal-k=2}
\end{table}
%\FloatBarrier
For $k = 3$, arguing as above we get that the possible independence polynomial are $ (1+z)(1+15z+45z^2+27z^3)$ and $(1+3z)(1+13z+21z^2+9z^3)$ (see Table ~\ref{3components-unequal-k=3}).
% For k = 3
\begin{table}[h!]
\centering
    \caption{Possible values of $m_1, m_2, n_1$ and $n_2$ for $k=3$: $n_1m_1 + m_2 =72$. }
    \begin{tabular}{|c|c|c|c|c|c|}
    \hline
      $n_1$ & $m_2$ & $n_2 =16-n_1$ & $m_1=60-n_1 n_2$ & $n_1m_1 + m_2$ & Independence polynomial \\
      \hline
       1 & 27 & 15 & 45 & 72 & $(1+z)$\\
               &   &   &   &   & $ (1+15z+45z^2+27z^3)$\\ \hline
       9 & 3 & 7 & -3 & - & Not possible\\ \hline 
       3 & 9 & 13 & 21  & 72 & $(1+3z)$\\ 
         &  &  &   &  & $(1+13z+21z^2+9z^3)$\\  \hline
    \end{tabular}
    \label{3components-unequal-k=3}
\end{table}
%\FloatBarrier
For $k=4$, it is seen from Table~\ref{3components-unequal-k=4} that $n_1m_1 + m_2 =128$ can not be true.
% For k = 4
\begin{table}[h!]
\centering
    \caption{Possible values of $m_1, m_2, n_1$ and $n_2$ for $k=4$: $n_1m_1 + m_2 =128$.}
    \begin{tabular}{|c|c|c|c|c|c|}
    \hline
      $n_1$ & $m_2$ & $n_2=16-n_1$ & $m_1 =80-n_1 n_2$ & $n_1m_1 + m_2$ &Independence polynomial \\
      \hline
       1 & 64 & 15 & 65 & 129 & Not possible\\ \hline
       2 & 32 & 14 & 52 & 136 & Not possible\\ \hline
       4 & 16 & 12 & 32  & 144 & Not possible\\ \hline
       8 & 8 & 8 & 16  & 136 & Not possible\\ \hline
    \end{tabular}
    \label{3components-unequal-k=4}
\end{table}
%\FloatBarrier
\par Since $(1+8z+8z^2)(1+8z+8z^2) =  \leftindex^4{I}_G(z)$ by Equation~\ref{indpoly-form}, the corresponding line segment is $[-1,0]$ (see Table~\ref{2components-equalindnumb-k=4}). In all other cases, the corresponding value of $k$ is $3$ (see Tables~\ref{3components-k=3},~\ref{2components-equalindnumb-k=3} and ~\ref{3components-unequal-k=3}) giving that the line segment is $[-\frac{4}{3},0]$.
This completes the proof.
\end{enumerate}
\end{proof}

In order to determine the disconnected graphs whose independence polynomials are given in Proposition~\ref{disconnectedgraphs}, we need to look for  non-isomorphic connected graphs with the following independence polynomials:  $1+12z+9z^2$,  $1+4z+3z^2$, $1+8z+8z^2$, $1+15z+45z^2+27z^3$ and $1+13z+21z^2+9z^3$.
\begin{lemma}
	Let $P_1(z)= 1+12z+9z^2, P_2 (z)=1+4z+3z^2, P_3(z)=1+8z+8z^2, P_4 (z)=1+15z+45z^2+27z^3$ and $ P_5 (z)=1+13z+21z^2+9z^3 $. If $N_i$ denotes the number of non-isomorphic connected graphs whose independence polynomial is $P_i$ then $N_i >0$ for each $i$. In particular,  $N_1 \geq 10$,  $N_2 = 1$, $N_3 \geq 25$, $N_4 \geq 4$ and  $N_5 \geq 5$.
 \label{examples-disconnected}
\end{lemma}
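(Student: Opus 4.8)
The plan is to pass to complements, turning independence counts into clique counts. Since the independent sets of $G$ are exactly the cliques of $H := G^c$, the polynomial $I_G$ equals the clique polynomial of $H$, and complementation is a bijection on isomorphism classes of graphs on a fixed vertex set. Hence each connected $G$ with $I_G = P_i$ corresponds to a graph $H$ whose clique polynomial is $P_i$ and whose complement $\overline{H}$ is connected. For $P_1, P_2, P_3$ (degree two) this forces $H$ to be triangle-free with prescribed vertex and edge counts, namely $(12,9)$, $(4,3)$ and $(8,8)$ respectively. For $P_4$ and $P_5$ (degree three) it forces $H$ to have clique number exactly three, in particular to be $K_4$-free, with prescribed numbers of vertices, edges and triangles, namely $(15,45,27)$ and $(13,21,9)$ respectively.

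Next I would dispose of the connectivity constraint where it is free. The complement $\overline{H}$ is disconnected exactly when $V(H)$ admits a bipartition into nonempty parts $A, B$ with every cross-pair an edge of $H$, i.e. when $H$ contains a spanning complete bipartite subgraph; the cheapest such subgraph is the star with $|V(H)|-1$ edges. For $P_1$ the graph $H$ has only $9 < 11$ edges, so no such star fits and $\overline{H}$ is automatically connected; for $P_3$ a vertex of full degree $7$ in a triangle-free graph would force $H = K_{1,7}$ with just $7$ edges, contradicting the required $8$, so $\overline{H}$ is again automatically connected. Thus $N_1$ and $N_3$ equal the full numbers of triangle-free graphs with the respective parameters. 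For $P_2$ the star does fit, so connectivity must be checked directly: the only connected graphs on four vertices with three edges are $P_4$ and $K_{1,3}$, and $K_{1,3}$ has independence number three (its three leaves), leaving $P_4$ as the unique witness, whence $N_2 = 1$.

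It then remains to produce enough non-isomorphic witnesses. For $N_1 \ge 10$ I would pad trees: each of the $106$ trees on ten vertices, together with two isolated vertices, is a triangle-free graph on twelve vertices with nine edges, yielding far more than ten admissible $H$. For $N_3 \ge 25$ I would enumerate the triangle-free graphs on eight vertices with exactly eight edges — whose cyclomatic number equals their number of components, so that they comprise unicyclic even-girth graphs together with multicyclic and disconnected configurations such as $C_4 \cup C_4$ — and confirm at least twenty-five isomorphism types. For $P_4$ and $P_5$ I would build explicit $K_4$-free witnesses from a complete multipartite skeleton augmented by a triangle-free part that creates no extra triangle: for instance $K_{3,3,3}$, which already has $27$ triangles, with six further vertices each joined to a full colour class, adds $18$ edges and no triangle, giving the parameters $(15,45,27)$; and $K_{3,3,1}$, which has $9$ triangles, taken disjointly from a triangle-free six-edge graph such as $C_6$, gives $(13,21,9)$. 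Varying the attachment pattern, respectively the triangle-free part, produces the required number of non-isomorphic examples, whose complements I would verify to be connected.

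The main obstacle is the enumeration underlying the lower bounds, concentrated in $N_3 \ge 25$ and in the independence-number-three cases $P_4, P_5$. There one must realize three independent parameters — vertices, edges and triangles — simultaneously, while forbidding $K_4$ to pin the clique number at three, and must then confirm both that the complement is connected and that no two witnesses are isomorphic. Unlike the degree-two cases, no single clean family resolves these, so the witnesses are found by a finite but genuinely case-heavy search, most naturally carried out with computer assistance; the real content of the lemma is verifying that at least the claimed numbers of non-isomorphic graphs actually occur.
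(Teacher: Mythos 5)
Your reduction is the same as the paper's: pass to the complement $H=G^{c}$, so that $I_G=P_i$ translates into $H$ having the prescribed numbers of vertices, edges and triangles (and being triangle-free, resp.\ $K_4$-free), while connectedness of $G$ becomes connectedness of $\overline{H}$. Your observation that $\overline{H}$ is disconnected exactly when $H$ contains a spanning complete bipartite subgraph --- hence that connectivity is automatic for $P_1$ (only $9<11$ edges available) and for $P_3$ (a spanning star would force $H=K_{1,7}$, which has $7\neq 8$ edges) --- is a genuine refinement of the paper, which only records the degree bound $\deg\leq n-2$ and otherwise relies on inspecting its figures. Your argument for $N_2=1$ is correct and complete, and your witnesses for $N_1\geq 10$ (trees on ten vertices together with two isolated vertices) form a cleaner and far more abundant family than the paper's ten hand-drawn complements.

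The gap is that for $N_3\geq 25$, $N_4\geq 4$ and $N_5\geq 5$ you never actually exhibit the required witnesses, and for these bounds the exhibition \emph{is} the proof. The paper lists twenty-five explicit triangle-free graphs on eight vertices with eight edges, and for the cubic cases gives one explicit complement together with explicit edge replacements producing three, respectively four, further non-isomorphic graphs. You replace this with ``enumerate \dots and confirm at least twenty-five isomorphism types'' and ``varying the attachment pattern \dots produces the required number,'' explicitly deferring the content to a case-heavy or computer-assisted search. The constructions you propose would in fact succeed --- for instance, distributing the six degree-three vertices over the colour classes of $K_{3,3,3}$ gives seven attachment patterns with pairwise distinct degree sequences --- but the connectivity check you postpone is not vacuous: in the pattern where all six new vertices attach to the same colour class, that class becomes adjacent to all twelve remaining vertices, so the complement is disconnected and this witness must be discarded, leaving six valid ones. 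Until the witnesses for $N_3$, $N_4$ and $N_5$ are actually listed (or the relevant enumerations completed and non-isomorphism certified), these lower bounds are asserted rather than proved.
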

\begin{proof}
	\begin{enumerate}
			
		\item  Let \(G_1\) be a connected graph such that \(I_{G_1}(z)=1+12z+9z^2\). Then  the complement of \(G_1\) has $12$ vertices,  \(9\) edges but no triangles. In this case,  each  vertex of $G_1^c$ has degree at most nine. Ten  non-isomorphic examples for the complement of   \(G_1\) are given in Figure~(\ref{M2}) showing that $N_1 \geq 10$.
		
		\item  Let \(G_2\) be a connected graph such that \(I_{G_2}(z)=1+4z+3z^2\). Then the complement of \(G_2\) has $4$ vertices,  \(3\) edges but no triangles, and each of its vertex has degree at most $2$. The only such graph is a path graph on \(4\) vertices. 
			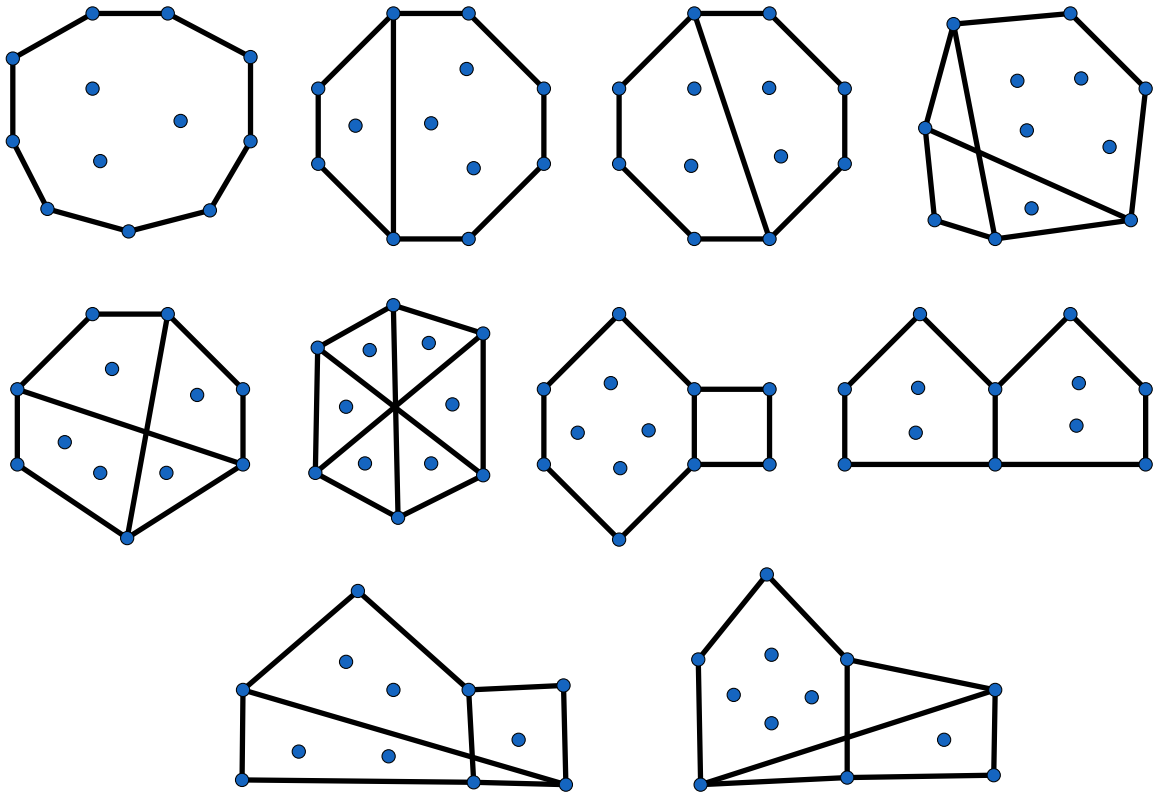
\begin{figure}[h!]
			\definecolor{rvwvcq}{rgb}{0.08235294117647059,0.396078431372549,0.7529411764705882}
			\begin{tikzpicture}
				\draw [line width=2pt] (-2,5)-- (-1,5);
				\draw [line width=2pt] (-2,5)-- (-3.06,4.4);
				\draw [line width=2pt] (-1,5)-- (0.1,4.42);
				\draw [line width=2pt] (-3.06,4.4)-- (-3.06,3.3);
				\draw [line width=2pt] (0.1,4.42)-- (0.1,3.3);
				\draw [line width=2pt] (-3.06,3.3)-- (-2.6,2.4);
				\draw [line width=2pt] (0.1,3.3)-- (-0.44,2.38);
				\draw [line width=2pt] (-2.6,2.4)-- (-1.52,2.1);
				\draw [line width=2pt] (-1.52,2.1)-- (-0.44,2.38);
				\draw [line width=2pt] (2,5)-- (3,5);
				\draw [line width=2pt] (2,5)-- (1,4);
				\draw [line width=2pt] (1,4)-- (1,3);
				\draw [line width=2pt] (1,3)-- (2,2);
				\draw [line width=2pt] (2,2)-- (3,2);
				\draw [line width=2pt] (3,2)-- (4,3);
				\draw [line width=2pt] (4,3)-- (4,4);
				\draw [line width=2pt] (4,4)-- (3,5);
				\draw [line width=2pt] (2,5)-- (2,2);
				\draw [line width=2pt] (6,5)-- (7,5);
				\draw [line width=2pt] (7,5)-- (8,4);
				\draw [line width=2pt] (8,4)-- (8,3);
				\draw [line width=2pt] (8,3)-- (7,2);
				\draw [line width=2pt] (7,2)-- (6,2);
				\draw [line width=2pt] (6,2)-- (5,3);
				\draw [line width=2pt] (5,3)-- (5,4);
				\draw [line width=2pt] (5,4)-- (6,5);
				\draw [line width=2pt] (6,5)-- (7,2);
				\draw [line width=2pt] (9.4463264390244,4.858265951219514)-- (11,5);
				\draw [line width=2pt] (11,5)-- (12,4);
				\draw [line width=2pt] (9.4463264390244,4.858265951219514)-- (9.069231414634157,3.4755841951219524);
				\draw [line width=2pt] (9.069231414634157,3.4755841951219524)-- (9.19492975609757,2.2500253658536584);
				\draw [line width=2pt] (12,4)-- (11.803170341463426,2.2500253658536584);
				\draw [line width=2pt] (9.19492975609757,2.2500253658536584)-- (10,2);
				\draw [line width=2pt] (10,2)-- (11.803170341463426,2.2500253658536584);
				\draw [line width=2pt] (9.4463264390244,4.858265951219514)-- (10,2);
				\draw [line width=2pt] (9.069231414634157,3.4755841951219524)-- (11.803170341463426,2.2500253658536584);
				\draw [line width=2pt] (-2,1)-- (-1,1);
				\draw [line width=2pt] (-2,1)-- (-3,0);
				\draw [line width=2pt] (-3,0)-- (-3,-1);
				\draw [line width=2pt] (-3,-1)-- (-1.54,-1.98);
				\draw [line width=2pt] (-1.54,-1.98)-- (0,-1);
				\draw [line width=2pt] (0,-1)-- (0,0);
				\draw [line width=2pt] (0,0)-- (-1,1);
				\draw [line width=2pt] (-3,0)-- (0,-1);
				\draw [line width=2pt] (-1,1)-- (-1.54,-1.98);
				\draw [line width=2pt] (1.9986997073170787,1.1187402926829257)-- (0.993112975609761,0.5530977560975595);
				\draw [line width=2pt] (0.993112975609761,0.5530977560975595)-- (0.9616883902439073,-1.112405268292686);
				\draw [line width=2pt] (0.9616883902439073,-1.112405268292686)-- (2.0615488780487863,-1.709472390243906);
				\draw [line width=2pt] (2.0615488780487863,-1.709472390243906)-- (3.1928339512195185,-1.1438298536585396);
				\draw [line width=2pt] (3.1928339512195185,-1.1438298536585396)-- (3.1928339512195185,0.7416452682926816);
				\draw [line width=2pt] (3.1928339512195185,0.7416452682926816)-- (1.9986997073170787,1.1187402926829257);
				\draw [line width=2pt] (1.9986997073170787,1.1187402926829257)-- (2.0615488780487863,-1.709472390243906);
				\draw [line width=2pt] (0.993112975609761,0.5530977560975595)-- (3.1928339512195185,-1.1438298536585396);
				\draw [line width=2pt] (0.9616883902439073,-1.112405268292686)-- (3.1928339512195185,0.7416452682926816);
				\draw [line width=2pt] (5,1)-- (4,0);
				\draw [line width=2pt] (4,0)-- (4,-1);
				\draw [line width=2pt] (4,-1)-- (5,-2);
				\draw [line width=2pt] (5,-2)-- (6,-1);
				\draw [line width=2pt] (6,-1)-- (6,0);
				\draw [line width=2pt] (6,0)-- (5,1);
				\draw [line width=2pt] (6,0)-- (7,0);
				\draw [line width=2pt] (7,0)-- (7,-1);
				\draw [line width=2pt] (7,-1)-- (6,-1);
				\draw [line width=2pt] (9,1)-- (8,0);
				\draw [line width=2pt] (8,0)-- (8,-1);
				\draw [line width=2pt] (9,1)-- (10,0);
				\draw [line width=2pt] (10,0)-- (10,-1);
				\draw [line width=2pt] (8,-1)-- (10,-1);
				\draw [line width=2pt] (10,-1)-- (12,-1);
				\draw [line width=2pt] (12,-1)-- (12,0);
				\draw [line width=2pt] (12,0)-- (11,1);
				\draw [line width=2pt] (11,1)-- (10,0);
				\draw [line width=2pt] (1.5273309268292736,-2.68363453658537)-- (0,-4);
				\draw [line width=2pt] (0,-4)-- (-0.012473756097556712,-5.197601365853665);
				\draw [line width=2pt] (-0.012473756097556712,-5.197601365853665)-- (3.0671356097561038,-5.229025951219518);
				\draw [line width=2pt] (3.0671356097561038,-5.229025951219518)-- (3,-4);
				\draw [line width=2pt] (3,-4)-- (1.5273309268292736,-2.68363453658537);
				\draw [line width=2pt] (3,-4)-- (4.261269853658543,-3.9406179512195174);
				\draw [line width=2pt] (4.261269853658543,-3.9406179512195174)-- (4.292694439024397,-5.260450536585372);
				\draw [line width=2pt] (4.292694439024397,-5.260450536585372)-- (3.0671356097561038,-5.229025951219518);
				\draw [line width=2pt] (0,-4)-- (4.292694439024397,-5.260450536585372);
				\draw [line width=2pt] (6.96378419512196,-2.4636624390243944)-- (6.052471219512204,-3.594947512195127);
				\draw [line width=2pt] (6.052471219512204,-3.594947512195127)-- (6.083895804878057,-5.260450536585372);
				\draw [line width=2pt] (6.083895804878057,-5.260450536585372)-- (8.032220097560986,-5.166176780487811);
				\draw [line width=2pt] (8.032220097560986,-5.166176780487811)-- (8.032220097560986,-3.594947512195127);
				\draw [line width=2pt] (8.032220097560986,-3.594947512195127)-- (6.96378419512196,-2.4636624390243944);
				\draw [line width=2pt] (8.032220097560986,-3.594947512195127)-- (10,-4);
				\draw [line width=2pt] (10,-4)-- (9.980544390243914,-5.1347521951219575);
				\draw [line width=2pt] (9.980544390243914,-5.1347521951219575)-- (8.032220097560986,-5.166176780487811);
				\draw [line width=2pt] (6.083895804878057,-5.260450536585372)-- (10,-4);
				\begin{scriptsize}
					\draw [fill=rvwvcq] (-2,5) circle (2.5pt);
					\draw [fill=rvwvcq] (-1,5) circle (2.5pt);
					\draw [fill=rvwvcq] (-3.06,4.4) circle (2.5pt);
					\draw [fill=rvwvcq] (0.1,4.42) circle (2.5pt);
					\draw [fill=rvwvcq] (-3.06,3.3) circle (2.5pt);
					\draw [fill=rvwvcq] (0.1,3.3) circle (2.5pt);
					\draw [fill=rvwvcq] (-2.6,2.4) circle (2.5pt);
					\draw [fill=rvwvcq] (-0.44,2.38) circle (2.5pt);
					\draw [fill=rvwvcq] (-1.52,2.1) circle (2.5pt);
					\draw [fill=rvwvcq] (2,5) circle (2.5pt);
					\draw [fill=rvwvcq] (3,5) circle (2.5pt);
					\draw [fill=rvwvcq] (1,4) circle (2.5pt);
					\draw [fill=rvwvcq] (1,3) circle (2.5pt);
					\draw [fill=rvwvcq] (2,2) circle (2.5pt);
					\draw [fill=rvwvcq] (3,2) circle (2.5pt);
					\draw [fill=rvwvcq] (4,3) circle (2.5pt);
					\draw [fill=rvwvcq] (4,4) circle (2.5pt);
					\draw [fill=rvwvcq] (6,5) circle (2.5pt);
					\draw [fill=rvwvcq] (7,5) circle (2.5pt);
					\draw [fill=rvwvcq] (8,4) circle (2.5pt);
					\draw [fill=rvwvcq] (8,3) circle (2.5pt);
					\draw [fill=rvwvcq] (7,2) circle (2.5pt);
					\draw [fill=rvwvcq] (6,2) circle (2.5pt);
					\draw [fill=rvwvcq] (5,3) circle (2.5pt);
					\draw [fill=rvwvcq] (5,4) circle (2.5pt);
					\draw [fill=rvwvcq] (9.4463264390244,4.858265951219514) circle (2.5pt);
					\draw [fill=rvwvcq] (11,5) circle (2.5pt);
					\draw [fill=rvwvcq] (12,4) circle (2.5pt);
					\draw [fill=rvwvcq] (9.069231414634157,3.4755841951219524) circle (2.5pt);
					\draw [fill=rvwvcq] (9.19492975609757,2.2500253658536584) circle (2.5pt);
					\draw [fill=rvwvcq] (11.803170341463426,2.2500253658536584) circle (2.5pt);
					\draw [fill=rvwvcq] (10,2) circle (2.5pt);
					\draw [fill=rvwvcq] (-2,1) circle (2.5pt);
					\draw [fill=rvwvcq] (-1,1) circle (2.5pt);
					\draw [fill=rvwvcq] (-3,0) circle (2.5pt);
					\draw [fill=rvwvcq] (-3,-1) circle (2.5pt);
					\draw [fill=rvwvcq] (-1.54,-1.98) circle (2.5pt);
					\draw [fill=rvwvcq] (0,-1) circle (2.5pt);
					\draw [fill=rvwvcq] (0,0) circle (2.5pt);
					\draw [fill=rvwvcq] (1.9986997073170787,1.1187402926829257) circle (2.5pt);
					\draw [fill=rvwvcq] (0.993112975609761,0.5530977560975595) circle (2.5pt);
					\draw [fill=rvwvcq] (0.9616883902439073,-1.112405268292686) circle (2.5pt);
					\draw [fill=rvwvcq] (2.0615488780487863,-1.709472390243906) circle (2.5pt);
					\draw [fill=rvwvcq] (3.1928339512195185,-1.1438298536585396) circle (2.5pt);
					\draw [fill=rvwvcq] (3.1928339512195185,0.7416452682926816) circle (2.5pt);
					\draw [fill=rvwvcq] (5,1) circle (2.5pt);
					\draw [fill=rvwvcq] (4,0) circle (2.5pt);
					\draw [fill=rvwvcq] (4,-1) circle (2.5pt);
					\draw [fill=rvwvcq] (5,-2) circle (2.5pt);
					\draw [fill=rvwvcq] (6,-1) circle (2.5pt);
					\draw [fill=rvwvcq] (6,0) circle (2.5pt);
					\draw [fill=rvwvcq] (7,0) circle (2.5pt);
					\draw [fill=rvwvcq] (7,-1) circle (2.5pt);
					\draw [fill=rvwvcq] (9,1) circle (2.5pt);
					\draw [fill=rvwvcq] (8,0) circle (2.5pt);
					\draw [fill=rvwvcq] (8,-1) circle (2.5pt);
					\draw [fill=rvwvcq] (10,0) circle (2.5pt);
					\draw [fill=rvwvcq] (10,-1) circle (2.5pt);
					\draw [fill=rvwvcq] (12,-1) circle (2.5pt);
					\draw [fill=rvwvcq] (12,0) circle (2.5pt);
					\draw [fill=rvwvcq] (11,1) circle (2.5pt);
					\draw [fill=rvwvcq] (1.5273309268292736,-2.68363453658537) circle (2.5pt);
					\draw [fill=rvwvcq] (0,-4) circle (2.5pt);
					\draw [fill=rvwvcq] (-0.012473756097556712,-5.197601365853665) circle (2.5pt);
					\draw [fill=rvwvcq] (3.0671356097561038,-5.229025951219518) circle (2.5pt);
					\draw [fill=rvwvcq] (3,-4) circle (2.5pt);
					\draw [fill=rvwvcq] (4.261269853658543,-3.9406179512195174) circle (2.5pt);
					\draw [fill=rvwvcq] (4.292694439024397,-5.260450536585372) circle (2.5pt);
					\draw [fill=rvwvcq] (6.96378419512196,-2.4636624390243944) circle (2.5pt);
					\draw [fill=rvwvcq] (6.052471219512204,-3.594947512195127) circle (2.5pt);
					\draw [fill=rvwvcq] (6.083895804878057,-5.260450536585372) circle (2.5pt);
					\draw [fill=rvwvcq] (8.032220097560986,-5.166176780487811) circle (2.5pt);
					\draw [fill=rvwvcq] (8.032220097560986,-3.594947512195127) circle (2.5pt);
					\draw [fill=rvwvcq] (10,-4) circle (2.5pt);
					\draw [fill=rvwvcq] (9.980544390243914,-5.1347521951219575) circle (2.5pt);
					\draw [fill=rvwvcq] (-2,4) circle (2.5pt);
					\draw [fill=rvwvcq] (-0.8295129756097523,3.569857951219513) circle (2.5pt);
					\draw [fill=rvwvcq] (-1.8979488780487774,3.03564) circle (2.5pt);
					\draw [fill=rvwvcq] (2.972861853658543,4.261198829268294) circle (2.5pt);
					\draw [fill=rvwvcq] (2.5014930731707374,3.5384333658536598) circle (2.5pt);
					\draw [fill=rvwvcq] (3.0671356097561038,2.9413662439024395) circle (2.5pt);
					\draw [fill=rvwvcq] (1.49590634146342,3.507008780487806) circle (2.5pt);
					\draw [fill=rvwvcq] (6.995208780487814,4.009802146341465) circle (2.5pt);
					\draw [fill=rvwvcq] (6,4) circle (2.5pt);
					\draw [fill=rvwvcq] (5.958197463414642,2.9727908292682934) circle (2.5pt);
					\draw [fill=rvwvcq] (7.1523317073170825,3.098489170731708) circle (2.5pt);
					\draw [fill=rvwvcq] (10.29479024390245,4.104075902439026) circle (2.5pt);
					\draw [fill=rvwvcq] (11.143254048780499,4.1355004878048796) circle (2.5pt);
					\draw [fill=rvwvcq] (11.520349073170744,3.224187512195123) circle (2.5pt);
					\draw [fill=rvwvcq] (10.420488585365865,3.4441596097560985) circle (2.5pt);
					\draw [fill=rvwvcq] (10.483337756097573,2.407148292682927) circle (2.5pt);
					\draw [fill=rvwvcq] (-1.0180604878048745,-1.112405268292686) circle (2.5pt);
					\draw [fill=rvwvcq] (-0.6095408780487767,-0.07539395121951425) circle (2.5pt);
					\draw [fill=rvwvcq] (-1.740825951219509,0.2702764878048763) circle (2.5pt);
					\draw [fill=rvwvcq] (-2.3693176585365827,-0.7038856585365879) circle (2.5pt);
					\draw [fill=rvwvcq] (-1.8979488780487774,-1.112405268292686) circle (2.5pt);
					\draw [fill=rvwvcq] (1.6844538536585418,0.5216731707317058) circle (2.5pt);
					\draw [fill=rvwvcq] (2.470068487804884,0.6159469268292669) circle (2.5pt);
					\draw [fill=rvwvcq] (2.784314341463421,-0.201092292682929) circle (2.5pt);
					\draw [fill=rvwvcq] (2.5014930731707374,-0.9867069268292711) circle (2.5pt);
					\draw [fill=rvwvcq] (1.6216046829268347,-0.9867069268292711) circle (2.5pt);
					\draw [fill=rvwvcq] (1.3702080000000052,-0.23251687804878268) circle (2.5pt);
					\draw [fill=rvwvcq] (4.889761560975617,0.08172897560975417) circle (2.5pt);
					\draw [fill=rvwvcq] (4.449817365853666,-0.5781873170731732) circle (2.5pt);
					\draw [fill=rvwvcq] (5.392554926829276,-0.5467627317073195) circle (2.5pt);
					\draw [fill=rvwvcq] (5.015459902439032,-1.0495560975609786) circle (2.5pt);
					\draw [fill=rvwvcq] (8.974957658536596,0.018879804878046805) circle (2.5pt);
					\draw [fill=rvwvcq] (11.111829463414646,0.08172897560975417) circle (2.5pt);
					\draw [fill=rvwvcq] (8.943533073170741,-0.5781873170731732) circle (2.5pt);
					\draw [fill=rvwvcq] (1.3702080000000052,-3.6263720975609806) circle (2.5pt);
					\draw [fill=rvwvcq] (3.6642027317073236,-4.663383414634152) circle (2.5pt);
					\draw [fill=rvwvcq] (0.7417162926829316,-4.820506341463421) circle (2.5pt);
					\draw [fill=rvwvcq] (1.9358505365853713,-4.883355512195128) circle (2.5pt);
					\draw [fill=rvwvcq] (2,-4) circle (2.5pt);
					\draw [fill=rvwvcq] (9.320628097560986,-4.663383414634152) circle (2.5pt);
					\draw [fill=rvwvcq] (7.026633365853668,-3.5320983414634197) circle (2.5pt);
					\draw [fill=rvwvcq] (6.52384,-4.066316292682933) circle (2.5pt);
					\draw [fill=rvwvcq] (7.56085131707318,-4.097740878048786) circle (2.5pt);
					\draw [fill=rvwvcq] (7.026633365853668,-4.443411317073177) circle (2.5pt);
					\draw [fill=rvwvcq] (11.080404878048792,-0.48391356097561217) circle (2.5pt);
				\end{scriptsize}
			\end{tikzpicture}
			\caption{Ten possibilities for the complement of $G_1$ where \(I_{G_1}(z)=1+12z +9z^2\).}
			\label{M2}
		\end{figure}
%		\FloatBarrier
		\item  Let \(G_3\) be a connected graph such that \(I_{G_3}(z)=1+8z+8z^2\). Then the complement of \(G_3\) has $8$ vertices,  \(8\) edges but no triangles, and each of its vertex has degree at most $6$. Figure ~(\ref{M_1}) provides \(25\)  non-isomorphic examples for the complement of \(G_3\). The graphs are given in the increasing order of the number of pendants in them.

		\begin{figure}[h!]
		\begin{center}
			\begin{multicols}{2}
			
			\definecolor{ududff}{rgb}{0.30196078431372547,0.30196078431372547,1}
			\begin{tikzpicture}
				\draw [line width=2pt] (-4.6,2.64)-- (-0.28,2.32);
				\draw [line width=2pt] (-4.6,2.64)-- (-0.02,1.34);
				\draw [line width=2pt] (-4.6,2.64)-- (-0.52,0.18);
				\draw [line width=2pt] (-4.6,2.64)-- (-1.14,-0.82);
				\draw [line width=2pt] (-0.28,2.32)-- (-5.06,1.72);
				\draw [line width=2pt] (-5.06,1.72)-- (-0.02,1.34);
				\draw [line width=2pt] (-5.06,1.72)-- (-0.52,0.18);
				\draw [line width=2pt] (-5.06,1.72)-- (-1.14,-0.82);
				\draw [line width=2pt] (-5,0.66)-- (-0.28,2.32);
				\draw [line width=2pt] (-5,0.66)-- (-0.02,1.34);
				\draw [line width=2pt] (-5,0.66)-- (-0.52,0.18);
				\draw [line width=2pt] (-5,0.66)-- (-1.14,-0.82);
				\draw [line width=2pt] (-0.28,2.32)-- (-4.96,-0.28);
				\draw [line width=2pt] (-0.28,2.32)-- (-3.48,-0.96);
				\draw [line width=2pt] (-0.28,2.32)-- (-2.82,-1.44);
				\draw [line width=2pt] (-4.96,-0.28)-- (-0.02,1.34);
				\draw [line width=2pt] (-4.96,-0.28)-- (-0.52,0.18);
				\draw [line width=2pt] (-3.48,-0.96)-- (-0.52,0.18);
				\draw [line width=2pt] (-2.82,-1.44)-- (-1.14,-0.82);
				\draw [line width=2pt] (-3.48,-0.96)-- (-1.14,-0.82);
				\draw [line width=2pt] (-2.82,-1.44)-- (-0.52,0.18);
				\draw [line width=2pt] (-0.02,1.34)-- (-2.82,-1.44);
				\draw [line width=2pt] (-0.02,1.34)-- (-3.48,-0.96);
				\draw [line width=2pt] (-4.96,-0.28)-- (-1.14,-0.82);
				\draw [line width=2pt] (-2.48,2.9)-- (-4.6,2.64);
				\draw [line width=2pt] (-2.48,2.9)-- (-5.06,1.72);
				\draw [line width=2pt] (-2.48,2.9)-- (-5,0.66);
				\draw [line width=2pt] (-2.48,2.9)-- (-4.96,-0.28);
				\draw [line width=2pt] (-2.48,2.9)-- (-3.48,-0.96);
				\draw [line width=2pt] (-2.48,2.9)-- (-2.82,-1.44);
				\draw [line width=2pt] (-2.48,2.9)-- (-0.28,2.32);
				\draw [line width=2pt] (-2.48,2.9)-- (-0.02,1.34);
				\draw [line width=2pt] (-2.48,2.9)-- (-0.52,0.18);
				\draw [line width=2pt] (-2.48,2.9)-- (-1.14,-0.82);
				\draw [line width=2pt] (-4.96,-0.28)-- (-4.66,-0.88);
				\draw [line width=2pt] (-3.48,-0.96)-- (-4.66,-0.88);
				\draw [line width=2pt] (-4.66,-0.88)-- (-2.82,-1.44);
				\draw [line width=2pt] (-2.82,-1.44)-- (-4.26,-1.58);
				\draw [line width=2pt] (-4.66,-0.88)-- (-4.26,-1.58);
				\draw [line width=2pt] (-3.48,-0.96)-- (-4.26,-1.58);
				\draw [line width=2pt] (-1.14,-0.82)-- (-0.76,-1.38);
				\draw [line width=2pt] (-0.76,-1.38)-- (0,-1);
				\draw [line width=2pt] (-1.14,-0.82)-- (0,-1);
				\draw [line width=2pt] (-0.52,0.18)-- (0,-1);
				\draw [line width=2pt] (-0.02,1.34)-- (0,-1);
				\begin{scriptsize}
					\draw [fill=ududff] (-0.28,2.32) circle (2.5pt);
					\draw[color=ududff] (-0.06,2.61) node {$w_2$};
					\draw [fill=ududff] (-1.14,-0.82) circle (2.5pt);
					\draw[color=ududff] (-0.9,-0.59) node {$w_5$};
					\draw [fill=ududff] (-0.52,0.18) circle (2.5pt);
					\draw[color=ududff] (-0.32,0.45) node {$w_4$};
					\draw [fill=ududff] (-0.02,1.34) circle (2.5pt);
					\draw[color=ududff] (0.22,1.59) node {$w_3$};
					\draw [fill=ududff] (-5,0.66) circle (2.5pt);
					\draw[color=ududff] (-5.52,0.77) node {$w_{13}$};
					\draw [fill=ududff] (-4.96,-0.28) circle (2.5pt);
					\draw[color=ududff] (-5.48,-0.15) node {$w_{12}$};
					\draw [fill=ududff] (-3.48,-0.96) circle (2.5pt);
					\draw[color=ududff] (-3.78,-0.67) node {$w_{10}$};
					\draw [fill=ududff] (-2.82,-1.44) circle (2.5pt);
					\draw[color=ududff] (-2.56,-1.55) node {$w_8$};
					\draw [fill=ududff] (-5.06,1.72) circle (2.5pt);
					\draw[color=ududff] (-5.56,1.85) node {$w_{14}$};
					\draw [fill=ududff] (-4.6,2.64) circle (2.5pt);
					\draw[color=ududff] (-5.12,2.77) node {$w_{15}$};
					\draw [fill=ududff] (-2.48,2.9) circle (2.5pt);
					\draw[color=ududff] (-2.42,3.25) node {$w_1$};
					\draw [fill=ududff] (-4.66,-0.88) circle (2.5pt);
					\draw[color=ududff] (-5.08,-0.83) node {$w_{11}$};
					\draw [fill=ududff] (-4.26,-1.58) circle (2.5pt);
					\draw[color=ududff] (-4.64,-1.43) node {$w_9$};
					\draw [fill=ududff] (-0.76,-1.38) circle (2.5pt);
					\draw[color=ududff] (-1.2,-1.4) node {$w_7$};
					\draw [fill=ududff] (0,-1) circle (2.5pt);
					\draw[color=ududff] (0.29,-1.07) node {$w_6$};
				\end{scriptsize}
			\end{tikzpicture}
				\caption{\label{Com G4} Complement of $G_4$}
					\definecolor{xdxdff}{rgb}{0.49019607843137253,0.49019607843137253,1}
				\definecolor{ududff}{rgb}{0.30196078431372547,0.30196078431372547,1}
				\begin{tikzpicture}
					\draw [line width=2pt] (-6,4)-- (-7,3);
					\draw [line width=2pt] (-6,4)-- (-5,3);
					\draw [line width=2pt] (-5,3)-- (-5.06,1.14);
					\draw [line width=2pt] (-7,3)-- (-7.02,1.14);
					\draw [line width=2pt] (-7.02,1.14)-- (-6,0);
					\draw [line width=2pt] (-5.06,1.14)-- (-6,0);
					\draw [line width=2pt] (-6,4)-- (-7.02,1.14);
					\draw [line width=2pt] (-6,4)-- (-5.06,1.14);
					\draw [line width=2pt] (-7,3)-- (-6,0);
					\draw [line width=2pt] (-5,3)-- (-6,0);
					\draw [line width=2pt] (-7,4)-- (-7,3);
					\draw [line width=2pt] (-5,4)-- (-5,3);
					\draw [line width=2pt] (-4,4)-- (-4,3);
					\draw [line width=2pt] (-5,3)-- (-4,3);
					\draw [line width=2pt] (-4,4)-- (-5,3);
					\draw [line width=2pt] (-4,3)-- (-4.04,1.12);
					\draw [line width=2pt] (-5.06,1.14)-- (-4.04,1.12);
					\draw [line width=2pt] (-5.06,1.14)-- (-5.04,0);
					\draw [line width=2pt] (-7.02,1.14)-- (-7,0);
					\draw [line width=2pt] (-7,3)-- (-5,3);
					\draw [line width=2pt] (-7.02,1.14)-- (-5.06,1.14);
					\begin{scriptsize}
						\draw [fill=ududff] (-6,4) circle (2.5pt);
						\draw[color=ududff] (-5.96,4.53) node {$v_2$};
						\draw [fill=ududff] (-7,3) circle (2.5pt);
						\draw[color=ududff] (-7.4,2.91) node {$v_5$};
						\draw [fill=ududff] (-5,3) circle (2.5pt);
						\draw[color=ududff] (-4.72,2.85) node {$v_6$};
						\draw [fill=ududff] (-7.02,1.14) circle (2.5pt);
						\draw[color=ududff] (-7.36,1.09) node {$v_8$};
						\draw [fill=ududff] (-6,0) circle (2.5pt);
						\draw[color=ududff] (-5.78,-0.25) node {$v_{12}$};
						\draw [fill=ududff] (-5.06,1.14) circle (2.5pt);
						\draw[color=ududff] (-4.78,1.01) node {$v_9$};
						\draw [fill=ududff] (-7,4) circle (2.5pt);
						\draw[color=ududff] (-7,4.51) node {$v_1$};
						\draw [fill=ududff] (-5,4) circle (2.5pt);
						\draw[color=ududff] (-4.92,4.43) node {$v_3$};
						\draw [fill=xdxdff] (-7,0) circle (2.5pt);
						\draw[color=xdxdff] (-6.94,-0.21) node {$v_{11}$};
						\draw [fill=xdxdff] (-5.04,0) circle (2.5pt);
						\draw[color=xdxdff] (-4.84,-0.23) node {$v_{13}$};
						\draw [fill=ududff] (-4.04,1.12) circle (2.5pt);
						\draw[color=ududff] (-3.7,0.91) node {$v_{10}$};
						\draw [fill=ududff] (-4,3) circle (2.5pt);
						\draw[color=ududff] (-3.64,2.93) node {$v_7$};
						\draw [fill=ududff] (-4,4) circle (2.5pt);
						\draw[color=ududff] (-3.94,4.51) node {$v_4$};
					\end{scriptsize}
				\end{tikzpicture}
				\caption{\label{Com G3} Complement of $G_5$}
			\end{multicols}		
		\end{center}
	\end{figure}
	
	%		\FloatBarrier
	\item 	 Let \(G_4\) be a connected graph such that \(I_{G_4}(z)=1+15z+45z^2+27z^3 \). Then the complement of \(G_4\) has $15$ vertices,  \(45\) edges and $27$ triangles. Further, each of its vertex has degree at most $13$. Figure~(\ref{Com G4}) provides such a graph.  Denote the edge joining two vertices $w_i, w_j$ by $w_i w_j$. Three more non-isomorphic graphs with the same independence polynomial as that of $G_4$ can be obtained by replacing the edges  $w_7 w_5$ and $w_7 w_6$ by (i) $w_7 w_5, w_7 w_{8}$, (ii) $w_7 w_8, w_7 w_9$,  or (iii) $w_7 w_{12}, w_7  w_{11}$ respectively. Thus, $N_4 \geq 4$.  
		\item Let \(G_5\) be a connected graph such that \(I_{G_5}(z)=1+13z+21z^2+9z^3\). Then the complement of \(G_5\) has $13$ vertices,  \(21\) edges and $9$ triangles. Further, each of its vertex has degree at most $11$. Figure~(\ref{Com G3}) provides such a graph. Four more non-isomorphic graphs with the same independence polynomial as that of $G_5$ can be obtained by replacing the edges  $v_4 v_6$ and $v_4 v_7$ by (i) $v_4 v_7, v_4 v_{10}$, (ii) $v_4 v_1, v_4 v_5$, (iii) $v_4 v_5, v_4  v_8$, or (iv) $v_4 v_3, v_4 v_8$ respectively. Thus, $N_5 \geq 5$.
			\end{enumerate}

		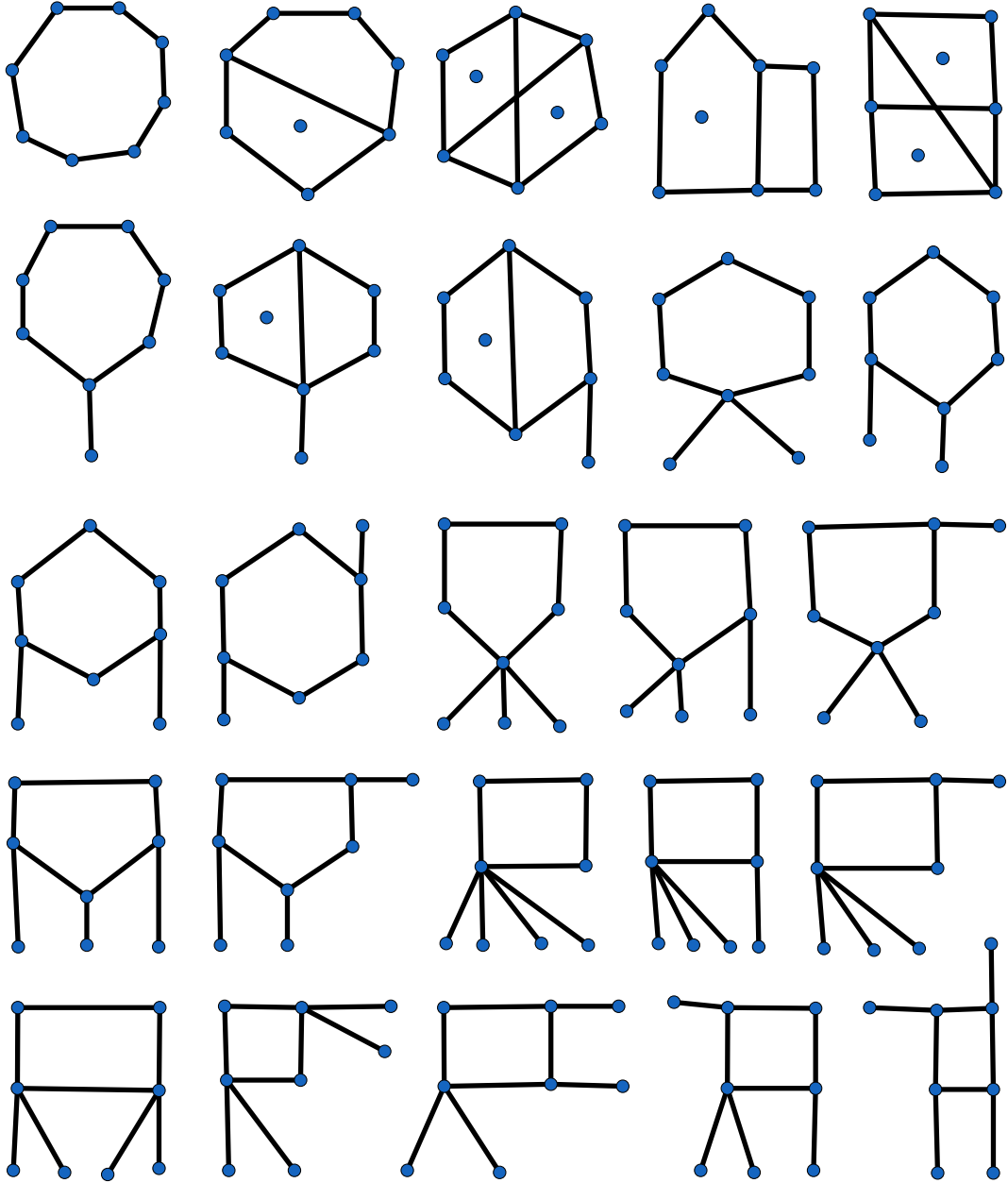
\begin{figure}[h!]
		\definecolor{rvwvcq}{rgb}{0.08235294117647059,0.396078431372549,0.7529411764705882}
		\begin{tikzpicture}
			\draw [line width=2pt] (-2.9809007718018217,4.790154652324867)-- (-4,4);
			\draw [line width=2pt] (-4,4)-- (-3.9461834213678677,3.165654583542984);
			\draw [line width=2pt] (-3.9461834213678677,3.165654583542984)-- (-2.933813813286405,2.6241545606156893);
			\draw [line width=2pt] (-2.933813813286405,2.6241545606156893)-- (-1.9920746429780674,3.2598285005738177);
			\draw [line width=2pt] (-1.9920746429780674,3.2598285005738177)-- (-2,4);
			\draw [line width=2pt] (-2,4)-- (-2.9809007718018217,4.790154652324867);
			\draw [line width=2pt] (-3.9461834213678677,3.165654583542984)-- (-4,2);
			\draw [line width=2pt] (-1.9920746429780674,3.2598285005738177)-- (-2,2);
			\draw [line width=2pt] (-0.03796586458826717,4.743067693809451)-- (-1.1209659104428553,4.013219836820489);
			\draw [line width=2pt] (-1.1209659104428553,4.013219836820489)-- (-1.0974224311851468,2.9302197909658996);
			\draw [line width=2pt] (-1.0974224311851468,2.9302197909658996)-- (-0.03796586458826717,2.3651762887808965);
			\draw [line width=2pt] (-0.03796586458826717,2.3651762887808965)-- (0.8566863472046534,2.906676311708191);
			\draw [line width=2pt] (0.8566863472046534,2.906676311708191)-- (0.833142867946945,4.036763316078197);
			\draw [line width=2pt] (0.833142867946945,4.036763316078197)-- (-0.03796586458826717,4.743067693809451);
			\draw [line width=2pt] (-1.0974224311851468,2.9302197909658996)-- (-1.0974224311851468,2.0591110584306866);
			\draw [line width=2pt] (0.833142867946945,4.036763316078197)-- (0.8566863472046534,4.790154652324867);
			\draw [line width=2pt] (2.010316830832367,4.813698131582576)-- (3.6583603788719574,4.813698131582576);
			\draw [line width=2pt] (2.010316830832367,4.813698131582576)-- (2.010316830832367,3.6365241686971532);
			\draw [line width=2pt] (3.6583603788719574,4.813698131582576)-- (3.6112734203565404,3.612980689439445);
			\draw [line width=2pt] (2.010316830832367,3.6365241686971532)-- (2.834338604852162,2.8595893531927743);
			\draw [line width=2pt] (2.834338604852162,2.8595893531927743)-- (3.6112734203565404,3.612980689439445);
			\draw [line width=2pt] (2.834338604852162,2.8595893531927743)-- (2,2);
			\draw [line width=2pt] (2.834338604852162,2.8595893531927743)-- (2.8578820841098707,2.0120240999152696);
			\draw [line width=2pt] (2.834338604852162,2.8595893531927743)-- (3.634816899614249,1.9649371413998526);
			\draw [line width=2pt] (4.553012590664878,4.790154652324867)-- (6.248143097219885,4.790154652324867);
			\draw [line width=2pt] (4.553012590664878,4.790154652324867)-- (4.576556069922587,3.5894372101817362);
			\draw [line width=2pt] (6.248143097219885,4.790154652324867)-- (6.318773534993011,3.5423502516663192);
			\draw [line width=2pt] (4.576556069922587,3.5894372101817362)-- (5.306403926911548,2.8360458739350656);
			\draw [line width=2pt] (5.306403926911548,2.8360458739350656)-- (6.318773534993011,3.5423502516663192);
			\draw [line width=2pt] (5.306403926911548,2.8360458739350656)-- (4.576556069922587,2.176828454719229);
			\draw [line width=2pt] (5.306403926911548,2.8360458739350656)-- (5.3534908854269645,2.1061980169461036);
			\draw [line width=2pt] (6.318773534993011,3.5423502516663192)-- (6.318773534993011,2.129741496203812);
			\draw [line width=2pt] (7.142795309012806,4.766611173067159)-- (8.908556253340938,4.813698131582576);
			\draw [line width=2pt] (7.142795309012806,4.766611173067159)-- (7.213425746785932,3.518806772408611);
			\draw [line width=2pt] (8.908556253340938,4.813698131582576)-- (8.908556253340938,3.565893730924028);
			\draw [line width=2pt] (7.213425746785932,3.518806772408611)-- (8.108077958578852,3.07148066651215);
			\draw [line width=2pt] (8.108077958578852,3.07148066651215)-- (8.908556253340938,3.565893730924028);
			\draw [line width=2pt] (8.108077958578852,3.07148066651215)-- (7.354686622332182,2.082654537688395);
			\draw [line width=2pt] (8.108077958578852,3.07148066651215)-- (8.720208419279272,2.035567579172978);
			\draw [line width=2pt] (8.908556253340938,4.813698131582576)-- (9.826751944391567,4.790154652324867);
			\draw [line width=2pt] (-4.040357338398701,1.1644588466377652)-- (-2.0627050807511926,1.1880023258954735);
			\draw [line width=2pt] (-4.040357338398701,1.1644588466377652)-- (-4.0639008176564095,0.31689359336026063);
			\draw [line width=2pt] (-2.0627050807511926,1.1880023258954735)-- (-2.015618122235776,0.3404370726179691);
			\draw [line width=2pt] (-4.0639008176564095,0.31689359336026063)-- (-3.0279877303172387,-0.43649774288641);
			\draw [line width=2pt] (-3.0279877303172387,-0.43649774288641)-- (-2.015618122235776,0.3404370726179691);
			\draw [line width=2pt] (-4.0639008176564095,0.31689359336026063)-- (-3.9932703798832847,-1.1428021206176637);
			\draw [line width=2pt] (-3.0279877303172387,-0.43649774288641)-- (-3.0279877303172387,-1.1192586413599552);
			\draw [line width=2pt] (-2.015618122235776,0.3404370726179691)-- (-2.015618122235776,-1.1428021206176637);
			\draw [line width=2pt] (-1.1209659104428553,1.211545805153182)-- (0.6918819924006944,1.211545805153182);
			\draw [line width=2pt] (-1.1209659104428553,1.211545805153182)-- (-1.168052868958272,0.3404370726179691);
			\draw [line width=2pt] (-1.168052868958272,0.3404370726179691)-- (-0.20277021939222623,-0.3423238258555762);
			\draw [line width=2pt] (-0.20277021939222623,-0.3423238258555762)-- (0.7154254716584028,0.26980663484484374);
			\draw [line width=2pt] (0.6918819924006944,1.211545805153182)-- (0.7154254716584028,0.26980663484484374);
			\draw [line width=2pt] (-1.168052868958272,0.3404370726179691)-- (-1.1445093897005638,-1.1192586413599552);
			\draw [line width=2pt] (-0.20277021939222623,-0.3423238258555762)-- (-0.20277021939222623,-1.1192586413599552);
			\draw [line width=2pt] (0.6918819924006944,1.211545805153182)-- (1.5629907249359065,1.211545805153182);
			\draw [line width=2pt] (2.504729895244244,1.1880023258954735)-- (4.011512567737584,1.211545805153182);
			\draw [line width=2pt] (2.504729895244244,1.1880023258954735)-- (2.5282733745019526,-0.012715116247657764);
			\draw [line width=2pt] (2.5282733745019526,-0.012715116247657764)-- (4,0);
			\draw [line width=2pt] (4,0)-- (4.011512567737584,1.211545805153182);
			\draw [line width=2pt] (2.5282733745019526,-0.012715116247657764)-- (2.033860310090075,-1.0957151621022467);
			\draw [line width=2pt] (2.5282733745019526,-0.012715116247657764)-- (2.551816853759661,-1.1192586413599552);
			\draw [line width=2pt] (2.5282733745019526,-0.012715116247657764)-- (3.3758386277794563,-1.0957151621022467);
			\draw [line width=2pt] (2.5282733745019526,-0.012715116247657764)-- (4.035056046995292,-1.1192586413599552);
			\draw [line width=2pt] (4.906164779530505,1.1880023258954735)-- (6.412947452023845,1.211545805153182);
			\draw [line width=2pt] (4.906164779530505,1.1880023258954735)-- (4.929708258788213,0.05791532152546761);
			\draw [line width=2pt] (4.929708258788213,0.05791532152546761)-- (6.412947452023845,0.05791532152546761);
			\draw [line width=2pt] (6.412947452023845,1.211545805153182)-- (6.412947452023845,0.05791532152546761);
			\draw [line width=2pt] (4.929708258788213,0.05791532152546761)-- (5.023882175819047,-1.0957151621022467);
			\draw [line width=2pt] (4.929708258788213,0.05791532152546761)-- (6.03625178390051,-1.1428021206176637);
			\draw [line width=2pt] (4.929708258788213,0.05791532152546761)-- (5.518295240230924,-1.1192586413599552);
			\draw [line width=2pt] (6.412947452023845,0.05791532152546761)-- (6.436490931281553,-1.1428021206176637);
			\draw [line width=2pt] (7.260512705301348,1.1880023258954735)-- (8.932099732598648,1.211545805153182);
			\draw [line width=2pt] (7.260512705301348,1.1880023258954735)-- (7.260512705301348,-0.03625859550536622);
			\draw [line width=2pt] (7.260512705301348,-0.03625859550536622)-- (8.955643211856355,-0.03625859550536622);
			\draw [line width=2pt] (8.955643211856355,-0.03625859550536622)-- (8.932099732598648,1.211545805153182);
			\draw [line width=2pt] (7.260512705301348,-0.03625859550536622)-- (7.354686622332182,-1.1663455998753722);
			\draw [line width=2pt] (7.260512705301348,-0.03625859550536622)-- (8.060991000063435,-1.1898890791330807);
			\draw [line width=2pt] (7.260512705301348,-0.03625859550536622)-- (8.696664940021563,-1.1663455998753722);
			\draw [line width=2pt] (8.932099732598648,1.211545805153182)-- (9.826751944391567,1.1880023258954735);
			\draw [line width=2pt] (-1.0616144770680143,11.421341427858838)-- (-0.39983089953494266,12.010932677380072);
			\draw [line width=2pt] (3.0120809312349617,12.02485185871854)-- (1.9861131987734724,11.42134142785887);
			\draw [line width=2pt] (1.9861131987734724,11.42134142785887)-- (2,10);
			\draw [line width=2pt] (2,10)-- (3.042256452777948,9.5504590921938);
			\draw [line width=2pt] (4.219101792954362,10.455724738483347)-- (3.042256452777948,9.5504590921938);
			\draw [line width=2pt] (4.007873142153467,11.632570078659734)-- (4.219101792954362,10.455724738483347);
			\draw [line width=2pt] (3.0120809312349617,12.02485185871854)-- (4.007873142153467,11.632570078659734);
			\draw [line width=2pt] (4.007873142153467,11.632570078659734)-- (2,10);
			\draw [line width=2pt] (3.0120809312349617,12.02485185871854)-- (3.042256452777948,9.5504590921938);
			\draw [line width=2pt] (5.727877870103613,12.055027380261524)-- (5.064016396157944,11.270463820143945);
			\draw [line width=2pt] (5.727877870103613,12.055027380261524)-- (6.452090387135253,11.270463820143913);
			\draw [line width=2pt] (5.064016396157944,11.270463820143945)-- (5.033840874614957,9.490108049107826);
			\draw [line width=2pt] (5.033840874614957,9.490108049107826)-- (6.42191486559227,9.520283570650813);
			\draw [line width=2pt] (6.452090387135253,11.270463820143913)-- (6.42191486559227,9.520283570650813);
			\draw [line width=2pt] (7.206478425709878,11.240288298600928)-- (6.452090387135253,11.270463820143913);
			\draw [line width=2pt] (6.42191486559227,9.520283570650813)-- (7.236653947252865,9.520283570650813);
			\draw [line width=2pt] (7.206478425709878,11.240288298600928)-- (7.236653947252865,9.520283570650813);
			\draw [line width=2pt] (8,12)-- (8.021217507370473,10.697128910827228);
			\draw [line width=2pt] (8.021217507370473,10.697128910827228)-- (8.081568550456444,9.459932527564842);
			\draw [line width=2pt] (8,12)-- (9.711046713777634,11.964500815632569);
			\draw [line width=2pt] (9.711046713777634,11.964500815632569)-- (9.771397756863603,10.666953389284242);
			\draw [line width=2pt] (9.771397756863603,10.666953389284242)-- (9.771397756863607,9.490108049107828);
			\draw [line width=2pt] (8.081568550456444,9.459932527564842)-- (9.771397756863607,9.490108049107828);
			\draw [line width=2pt] (8.021217507370473,10.697128910827228)-- (9.771397756863603,10.666953389284242);
			\draw [line width=2pt] (8,12)-- (9.771397756863607,9.490108049107828);
			\draw [line width=2pt] (-2.0110116053141995,-4.264575426040084)-- (-2.0110116053141995,-3.1662923674000636);
			\draw [line width=2pt] (-2.0110116053141995,-3.1662923674000636)-- (-2.733566249156317,-4.351281983301138);
			\draw [line width=2pt] (-4.005262422318443,-3.137390181646379)-- (-3.3405121499836956,-4.322379797547453);
			\draw [line width=2pt] (-4.005262422318443,-3.137390181646379)-- (-4.063066793825813,-4.293477611793768);
			\draw [line width=2pt] (-4,-2)-- (-4.005262422318443,-3.137390181646379);
			\draw [line width=2pt] (-4.005262422318443,-3.137390181646379)-- (-2.0110116053141995,-3.1662923674000636);
			\draw [line width=2pt] (-2,-2)-- (-2.0110116053141995,-3.1662923674000636);
			\draw [line width=2pt] (-4,-2)-- (-2,-2);
			\draw [line width=2pt] (-1.0572394754426049,-3.02178143863164)-- (-1.02833728968892,-4.293477611793768);
			\draw [line width=2pt] (-1.0572394754426049,-3.02178143863164)-- (-0.10346734557100985,-4.293477611793768);
			\draw [line width=2pt] (-1.0861416611962895,-1.9813027514989898)-- (-1.0572394754426049,-3.02178143863164);
			\draw [line width=2pt] (-1.0572394754426049,-3.02178143863164)-- (-0.016760788309955775,-3.02178143863164);
			\draw [line width=2pt] (0,-2)-- (-0.016760788309955775,-3.02178143863164);
			\draw [line width=2pt] (-1.0861416611962895,-1.9813027514989898)-- (0,-2);
			\draw [line width=2pt] (0,-2)-- (1.1682288275911168,-2.617150838080054);
			\draw [line width=2pt] (0,-2)-- (1.2549353848521707,-1.9813027514989898);
			\draw [line width=2pt] (2,-2)-- (2.0063922144479727,-3.1084879958926943);
			\draw [line width=2pt] (2,-2)-- (3.509305873639577,-1.9813027514989898);
			\draw [line width=2pt] (3.509305873639577,-1.9813027514989898)-- (3.509305873639577,-3.0795858101390094);
			\draw [line width=2pt] (2.0063922144479727,-3.1084879958926943)-- (3.509305873639577,-3.0795858101390094);
			\draw [line width=2pt] (2.0063922144479727,-3.1084879958926943)-- (1.4861528708816483,-4.293477611793768);
			\draw [line width=2pt] (2.0063922144479727,-3.1084879958926943)-- (2.7867512297974595,-4.322379797547453);
			\draw [line width=2pt] (3.509305873639577,-3.0795858101390094)-- (4.520882375018541,-3.1084879958926943);
			\draw [line width=2pt] (3.509305873639577,-1.9813027514989898)-- (4.463078003511172,-1.9813027514989898);
			\draw [line width=2pt] (5.243437018860659,-1.9234983799916203)-- (6,-2);
			\draw [line width=2pt] (6,-2)-- (5.994893848456461,-3.137390181646379);
			\draw [line width=2pt] (5.994893848456461,-3.137390181646379)-- (7.237687835864903,-3.137390181646379);
			\draw [line width=2pt] (7.237687835864903,-2.0102049372526745)-- (7.237687835864903,-3.137390181646379);
			\draw [line width=2pt] (6,-2)-- (7.237687835864903,-2.0102049372526745);
			\draw [line width=2pt] (7.237687835864903,-3.137390181646379)-- (7.208785650111218,-4.293477611793768);
			\draw [line width=2pt] (5.994893848456461,-3.137390181646379)-- (5.6191654336585595,-4.293477611793768);
			\draw [line width=2pt] (5.994893848456461,-3.137390181646379)-- (6.370622263254361,-4.322379797547453);
			\draw [line width=2pt] (9.711046713777634,-1.1015000124800458)-- (9.723275810681786,-2.0102049372526745);
			\draw [line width=2pt] (8.9429167953323,-2.0391071230063593)-- (8.926483153660023,-3.1534354774030278);
			\draw [line width=2pt] (8.926483153660023,-3.1534354774030278)-- (9.741222235320619,-3.1534354774030278);
			\draw [line width=2pt] (9.723275810681786,-2.0102049372526745)-- (9.741222235320619,-3.1534354774030278);
			\draw [line width=2pt] (8.9429167953323,-2.0391071230063593)-- (9.723275810681786,-2.0102049372526745);
			\draw [line width=2pt] (8,-2)-- (8.9429167953323,-2.0391071230063593);
			\draw [line width=2pt] (8.926483153660023,-3.1534354774030278)-- (8.956658675203009,-4.330280817579443);
			\draw [line width=2pt] (9.741222235320619,-3.1534354774030278)-- (9.741222235320619,-4.330280817579443);
			\draw [line width=2pt] (-3.92828902365159,10.274671609225438)-- (-3.2342520281629348,9.942740872252601);
			\draw [line width=2pt] (-3.2342520281629348,9.942740872252601)-- (-2.3591619034163696,10.063442958424542);
			\draw [line width=2pt] (-1.9367046018145795,10.757479953913197)-- (-2.3591619034163696,10.063442958424542);
			\draw [line width=2pt] (-1.9668801233575646,11.602394557116778)-- (-1.9367046018145795,10.757479953913197);
			\draw [line width=2pt] (-2.5703905542172647,12.085202901804509)-- (-1.9668801233575646,11.602394557116778);
			\draw [line width=2pt] (-3.44548067896383,12.085202901804509)-- (-2.5703905542172647,12.085202901804509);
			\draw [line width=2pt] (-3.44548067896383,12.085202901804509)-- (-4.079166631366515,11.210112777057972);
			\draw [line width=2pt] (-4.079166631366515,11.210112777057972)-- (-3.92828902365159,10.274671609225438);
			\draw [line width=2pt] (-1.0616144770680143,11.421341427858838)-- (-1.0616144770680143,10.335022652311379);
			\draw [line width=2pt] (-1.0616144770680143,10.335022652311379)-- (0.08505534156541607,9.459932527564842);
			\draw [line width=2pt] (1.2317251601988464,10.304847130768392)-- (0.08505534156541607,9.459932527564842);
			\draw [line width=2pt] (1.3524272463707865,11.300639341686898)-- (1.2317251601988464,10.304847130768392);
			\draw [line width=2pt] (-1.0616144770680143,11.421341427858838)-- (1.2317251601988464,10.304847130768392);
			\draw [line width=2pt] (-3.92828902365159,8.252911665845442)-- (-3.92828902365159,7.498523627270816);
			\draw [line width=2pt] (-3.92828902365159,7.498523627270816)-- (-2.992847855819056,6.774311110239177);
			\draw [line width=2pt] (-2.1479332526154744,7.377821541098876)-- (-2.992847855819056,6.774311110239177);
			\draw [line width=2pt] (-1.9367046018145795,8.252911665845442)-- (-2.1479332526154744,7.377821541098876);
			\draw [line width=2pt] (-3.5360072435927847,9.007299704420067)-- (-3.92828902365159,8.252911665845442);
			\draw [line width=2pt] (-3.5360072435927847,9.007299704420067)-- (-2.449688468045325,9.007299704420067);
			\draw [line width=2pt] (-2.449688468045325,9.007299704420067)-- (-1.9367046018145795,8.252911665845442);
			\draw [line width=2pt] (-2.992847855819056,6.774311110239177)-- (-2.962672334276071,5.778518899320671);
			\draw [line width=2pt] (-0.03564674460652397,8.735720010533203)-- (1.0204965093979517,8.102034058130517);
			\draw [line width=2pt] (1.0204965093979517,8.102034058130517)-- (1.0204965093979517,7.257119454926937);
			\draw [line width=2pt] (1.0204965093979517,7.257119454926937)-- (0.024704298479446066,6.713960067153208);
			\draw [line width=2pt] (-1.1219655201539847,7.226943933383953)-- (0.024704298479446066,6.713960067153208);
			\draw [line width=2pt] (-1.1521410416969697,8.102034058130517)-- (-1.1219655201539847,7.226943933383953);
			\draw [line width=2pt] (-0.03564674460652397,8.735720010533203)-- (-1.1521410416969697,8.102034058130517);
			\draw [line width=2pt] (0.024704298479446066,6.713960067153208)-- (-0.005471223063538952,5.748343377777687);
			\draw [line width=2pt] (-0.03564674460652397,8.735720010533203)-- (0.024704298479446066,6.713960067153208);
			\draw [line width=2pt] (2.9215543666060078,8.735720010533203)-- (2,8);
			\draw [line width=2pt] (2.9215543666060078,8.735720010533203)-- (4,8);
			\draw [line width=2pt] (4,8)-- (4.068224185239439,6.864837674868133);
			\draw [line width=2pt] (4.068224185239439,6.864837674868133)-- (3.012080931234963,6.080274114750522);
			\draw [line width=2pt] (4.068224185239439,6.864837674868133)-- (4.038048663696453,5.6879923346917165);
			\draw [line width=2pt] (2.0162887203164575,6.864837674868133)-- (3.012080931234963,6.080274114750522);
			\draw [line width=2pt] (2,8)-- (2.0162887203164575,6.864837674868133);
			\draw [line width=2pt] (5.99945756399048,6.623433502524252)-- (5.184718482329885,5.657816813148732);
			\draw [line width=2pt] (5.99945756399048,6.623433502524252)-- (6.995249774908985,5.748343377777687);
			\draw [line width=2pt] (2.9215543666060078,8.735720010533203)-- (3.012080931234963,6.080274114750522);
			\draw [line width=2pt] (5.99945756399048,8.554666881275294)-- (7.14612738262391,8.011507493501563);
			\draw [line width=2pt] (7.14612738262391,8.011507493501563)-- (7.14612738262391,6.925188717954103);
			\draw [line width=2pt] (7.14612738262391,6.925188717954103)-- (5.99945756399048,6.623433502524252);
			\draw [line width=2pt] (5.99945756399048,8.554666881275294)-- (5.033840874614959,7.981331971958578);
			\draw [line width=2pt] (5.033840874614959,7.981331971958578)-- (5.094191917700929,6.925188717954103);
			\draw [line width=2pt] (5.094191917700929,6.925188717954103)-- (5.99945756399048,6.623433502524252);
			\draw [line width=2pt] (8.896307632117042,8.645193445904248)-- (8,8);
			\draw [line width=2pt] (8,8)-- (8.021217507370476,7.136417368754997);
			\draw [line width=2pt] (8.021217507370476,7.136417368754997)-- (9.047185239831967,6.442380373266342);
			\draw [line width=2pt] (9.801573278406591,7.136417368754997)-- (9.047185239831967,6.442380373266342);
			\draw [line width=2pt] (8.896307632117042,8.645193445904248)-- (9.741222235320622,8.011507493501563);
			\draw [line width=2pt] (9.741222235320622,8.011507493501563)-- (9.801573278406591,7.136417368754997);
			\draw [line width=2pt] (8.021217507370476,7.136417368754997)-- (8,6);
			\draw [line width=2pt] (9.047185239831967,6.442380373266342)-- (9.017009718288982,5.627641291605746);
			\draw [line width=2pt] (-0.39983089953494266,12.010932677380072)-- (0.7431964376790379,12.010932677380072);
			\draw [line width=2pt] (0.7431964376790379,12.010932677380072)-- (1.3524272463707865,11.300639341686898);
			\begin{scriptsize}
				\draw [fill=rvwvcq] (-2.9809007718018217,4.790154652324867) circle (2.5pt);
				\draw [fill=rvwvcq] (-4,4) circle (2.5pt);
				\draw [fill=rvwvcq] (-3.9461834213678677,3.165654583542984) circle (2.5pt);
				\draw [fill=rvwvcq] (-2.933813813286405,2.6241545606156893) circle (2.5pt);
				\draw [fill=rvwvcq] (-1.9920746429780674,3.2598285005738177) circle (2.5pt);
				\draw [fill=rvwvcq] (-2,4) circle (2.5pt);
				\draw [fill=rvwvcq] (-4,2) circle (2.5pt);
				\draw [fill=rvwvcq] (-2,2) circle (2.5pt);
				\draw [fill=rvwvcq] (-0.03796586458826717,4.743067693809451) circle (2.5pt);
				\draw [fill=rvwvcq] (-1.1209659104428553,4.013219836820489) circle (2.5pt);
				\draw [fill=rvwvcq] (-1.0974224311851468,2.9302197909658996) circle (2.5pt);
				\draw [fill=rvwvcq] (-0.03796586458826717,2.3651762887808965) circle (2.5pt);
				\draw [fill=rvwvcq] (0.8566863472046534,2.906676311708191) circle (2.5pt);
				\draw [fill=rvwvcq] (0.833142867946945,4.036763316078197) circle (2.5pt);
				\draw [fill=rvwvcq] (-1.0974224311851468,2.0591110584306866) circle (2.5pt);
				\draw [fill=rvwvcq] (0.8566863472046534,4.790154652324867) circle (2.5pt);
				\draw [fill=rvwvcq] (2.010316830832367,4.813698131582576) circle (2.5pt);
				\draw [fill=rvwvcq] (3.6583603788719574,4.813698131582576) circle (2.5pt);
				\draw [fill=rvwvcq] (2.010316830832367,3.6365241686971532) circle (2.5pt);
				\draw [fill=rvwvcq] (3.6112734203565404,3.612980689439445) circle (2.5pt);
				\draw [fill=rvwvcq] (2.834338604852162,2.8595893531927743) circle (2.5pt);
				\draw [fill=rvwvcq] (2,2) circle (2.5pt);
				\draw [fill=rvwvcq] (2.8578820841098707,2.0120240999152696) circle (2.5pt);
				\draw [fill=rvwvcq] (3.634816899614249,1.9649371413998526) circle (2.5pt);
				\draw [fill=rvwvcq] (4.553012590664878,4.790154652324867) circle (2.5pt);
				\draw [fill=rvwvcq] (6.248143097219885,4.790154652324867) circle (2.5pt);
				\draw [fill=rvwvcq] (4.576556069922587,3.5894372101817362) circle (2.5pt);
				\draw [fill=rvwvcq] (6.318773534993011,3.5423502516663192) circle (2.5pt);
				\draw [fill=rvwvcq] (5.306403926911548,2.8360458739350656) circle (2.5pt);
				\draw [fill=rvwvcq] (4.576556069922587,2.176828454719229) circle (2.5pt);
				\draw [fill=rvwvcq] (5.3534908854269645,2.1061980169461036) circle (2.5pt);
				\draw [fill=rvwvcq] (6.318773534993011,2.129741496203812) circle (2.5pt);
				\draw [fill=rvwvcq] (7.142795309012806,4.766611173067159) circle (2.5pt);
				\draw [fill=rvwvcq] (8.908556253340938,4.813698131582576) circle (2.5pt);
				\draw [fill=rvwvcq] (7.213425746785932,3.518806772408611) circle (2.5pt);
				\draw [fill=rvwvcq] (8.908556253340938,3.565893730924028) circle (2.5pt);
				\draw [fill=rvwvcq] (8.108077958578852,3.07148066651215) circle (2.5pt);
				\draw [fill=rvwvcq] (7.354686622332182,2.082654537688395) circle (2.5pt);
				\draw [fill=rvwvcq] (8.720208419279272,2.035567579172978) circle (2.5pt);
				\draw [fill=rvwvcq] (9.826751944391567,4.790154652324867) circle (2.5pt);
				\draw [fill=rvwvcq] (-4.040357338398701,1.1644588466377652) circle (2.5pt);
				\draw [fill=rvwvcq] (-2.0627050807511926,1.1880023258954735) circle (2.5pt);
				\draw [fill=rvwvcq] (-4.0639008176564095,0.31689359336026063) circle (2.5pt);
				\draw [fill=rvwvcq] (-2.015618122235776,0.3404370726179691) circle (2.5pt);
				\draw [fill=rvwvcq] (-3.0279877303172387,-0.43649774288641) circle (2.5pt);
				\draw [fill=rvwvcq] (-3.9932703798832847,-1.1428021206176637) circle (2.5pt);
				\draw [fill=rvwvcq] (-3.0279877303172387,-1.1192586413599552) circle (2.5pt);
				\draw [fill=rvwvcq] (-2.015618122235776,-1.1428021206176637) circle (2.5pt);
				\draw [fill=rvwvcq] (-1.1209659104428553,1.211545805153182) circle (2.5pt);
				\draw [fill=rvwvcq] (0.6918819924006944,1.211545805153182) circle (2.5pt);
				\draw [fill=rvwvcq] (-1.168052868958272,0.3404370726179691) circle (2.5pt);
				\draw [fill=rvwvcq] (-0.20277021939222623,-0.3423238258555762) circle (2.5pt);
				\draw [fill=rvwvcq] (0.7154254716584028,0.26980663484484374) circle (2.5pt);
				\draw [fill=rvwvcq] (-1.1445093897005638,-1.1192586413599552) circle (2.5pt);
				\draw [fill=rvwvcq] (-0.20277021939222623,-1.1192586413599552) circle (2.5pt);
				\draw [fill=rvwvcq] (1.5629907249359065,1.211545805153182) circle (2.5pt);
				\draw [fill=rvwvcq] (2.504729895244244,1.1880023258954735) circle (2.5pt);
				\draw [fill=rvwvcq] (4.011512567737584,1.211545805153182) circle (2.5pt);
				\draw [fill=rvwvcq] (2.5282733745019526,-0.012715116247657764) circle (2.5pt);
				\draw [fill=rvwvcq] (4,0) circle (2.5pt);
				\draw [fill=rvwvcq] (2.033860310090075,-1.0957151621022467) circle (2.5pt);
				\draw [fill=rvwvcq] (2.551816853759661,-1.1192586413599552) circle (2.5pt);
				\draw [fill=rvwvcq] (3.3758386277794563,-1.0957151621022467) circle (2.5pt);
				\draw [fill=rvwvcq] (4.035056046995292,-1.1192586413599552) circle (2.5pt);
				\draw [fill=rvwvcq] (4.906164779530505,1.1880023258954735) circle (2.5pt);
				\draw [fill=rvwvcq] (6.412947452023845,1.211545805153182) circle (2.5pt);
				\draw [fill=rvwvcq] (4.929708258788213,0.05791532152546761) circle (2.5pt);
				\draw [fill=rvwvcq] (6.412947452023845,0.05791532152546761) circle (2.5pt);
				\draw [fill=rvwvcq] (5.023882175819047,-1.0957151621022467) circle (2.5pt);
				\draw [fill=rvwvcq] (6.03625178390051,-1.1428021206176637) circle (2.5pt);
				\draw [fill=rvwvcq] (5.518295240230924,-1.1192586413599552) circle (2.5pt);
				\draw [fill=rvwvcq] (6.436490931281553,-1.1428021206176637) circle (2.5pt);
				\draw [fill=rvwvcq] (7.260512705301348,1.1880023258954735) circle (2.5pt);
				\draw [fill=rvwvcq] (8.932099732598648,1.211545805153182) circle (2.5pt);
				\draw [fill=rvwvcq] (7.260512705301348,-0.03625859550536622) circle (2.5pt);
				\draw [fill=rvwvcq] (8.955643211856355,-0.03625859550536622) circle (2.5pt);
				\draw [fill=rvwvcq] (7.354686622332182,-1.1663455998753722) circle (2.5pt);
				\draw [fill=rvwvcq] (8.060991000063435,-1.1898890791330807) circle (2.5pt);
				\draw [fill=rvwvcq] (8.696664940021563,-1.1663455998753722) circle (2.5pt);
				\draw [fill=rvwvcq] (9.826751944391567,1.1880023258954735) circle (2.5pt);
				\draw [fill=rvwvcq] (0.08505534156541607,9.459932527564842) circle (2.5pt);
				\draw [fill=rvwvcq] (1.3524272463707865,11.300639341686898) circle (2.5pt);
				\draw [fill=rvwvcq] (-1.0616144770680143,11.421341427858838) circle (2.5pt);
				\draw [fill=rvwvcq] (-0.39983089953494266,12.010932677380072) circle (2.5pt);
				\draw [fill=rvwvcq] (2,10) circle (2.5pt);
				\draw [fill=rvwvcq] (1.9861131987734724,11.42134142785887) circle (2.5pt);
				\draw [fill=rvwvcq] (3.0120809312349617,12.02485185871854) circle (2.5pt);
				\draw [fill=rvwvcq] (4.007873142153467,11.632570078659734) circle (2.5pt);
				\draw [fill=rvwvcq] (4.219101792954362,10.455724738483347) circle (2.5pt);
				\draw [fill=rvwvcq] (3.042256452777948,9.5504590921938) circle (2.5pt);
				\draw [fill=rvwvcq] (5.064016396157944,11.270463820143945) circle (2.5pt);
				\draw [fill=rvwvcq] (5.033840874614957,9.490108049107826) circle (2.5pt);
				\draw [fill=rvwvcq] (6.452090387135253,11.270463820143913) circle (2.5pt);
				\draw [fill=rvwvcq] (6.42191486559227,9.520283570650813) circle (2.5pt);
				\draw [fill=rvwvcq] (5.727877870103613,12.055027380261524) circle (2.5pt);
				\draw [fill=rvwvcq] (7.206478425709878,11.240288298600928) circle (2.5pt);
				\draw [fill=rvwvcq] (7.236653947252865,9.520283570650813) circle (2.5pt);
				\draw [fill=rvwvcq] (8,12) circle (2.5pt);
				\draw [fill=rvwvcq] (8.021217507370473,10.697128910827228) circle (2.5pt);
				\draw [fill=rvwvcq] (8.081568550456444,9.459932527564842) circle (2.5pt);
				\draw [fill=rvwvcq] (9.771397756863607,9.490108049107828) circle (2.5pt);
				\draw [fill=rvwvcq] (9.771397756863603,10.666953389284242) circle (2.5pt);
				\draw [fill=rvwvcq] (9.711046713777634,11.964500815632569) circle (2.5pt);
				\draw [fill=rvwvcq] (-4,-2) circle (2.5pt);
				\draw [fill=rvwvcq] (-2,-2) circle (2.5pt);
				\draw [fill=rvwvcq] (-4.005262422318443,-3.137390181646379) circle (2.5pt);
				\draw [fill=rvwvcq] (-2.0110116053141995,-3.1662923674000636) circle (2.5pt);
				\draw [fill=rvwvcq] (-4.063066793825813,-4.293477611793768) circle (2.5pt);
				\draw [fill=rvwvcq] (-3.3405121499836956,-4.322379797547453) circle (2.5pt);
				\draw [fill=rvwvcq] (-2.733566249156317,-4.351281983301138) circle (2.5pt);
				\draw [fill=rvwvcq] (-2.0110116053141995,-4.264575426040084) circle (2.5pt);
				\draw [fill=rvwvcq] (-1.0861416611962895,-1.9813027514989898) circle (2.5pt);
				\draw [fill=rvwvcq] (-1.0572394754426049,-3.02178143863164) circle (2.5pt);
				\draw [fill=rvwvcq] (-0.016760788309955775,-3.02178143863164) circle (2.5pt);
				\draw [fill=rvwvcq] (0,-2) circle (2.5pt);
				\draw [fill=rvwvcq] (1.2549353848521707,-1.9813027514989898) circle (2.5pt);
				\draw [fill=rvwvcq] (1.1682288275911168,-2.617150838080054) circle (2.5pt);
				\draw [fill=rvwvcq] (-1.02833728968892,-4.293477611793768) circle (2.5pt);
				\draw [fill=rvwvcq] (-0.10346734557100985,-4.293477611793768) circle (2.5pt);
				\draw [fill=rvwvcq] (2,-2) circle (2.5pt);
				\draw [fill=rvwvcq] (2.0063922144479727,-3.1084879958926943) circle (2.5pt);
				\draw [fill=rvwvcq] (3.509305873639577,-1.9813027514989898) circle (2.5pt);
				\draw [fill=rvwvcq] (3.509305873639577,-3.0795858101390094) circle (2.5pt);
				\draw [fill=rvwvcq] (4.463078003511172,-1.9813027514989898) circle (2.5pt);
				\draw [fill=rvwvcq] (4.520882375018541,-3.1084879958926943) circle (2.5pt);
				\draw [fill=rvwvcq] (1.4861528708816483,-4.293477611793768) circle (2.5pt);
				\draw [fill=rvwvcq] (2.7867512297974595,-4.322379797547453) circle (2.5pt);
				\draw [fill=rvwvcq] (5.243437018860659,-1.9234983799916203) circle (2.5pt);
				\draw [fill=rvwvcq] (6,-2) circle (2.5pt);
				\draw [fill=rvwvcq] (7.237687835864903,-2.0102049372526745) circle (2.5pt);
				\draw [fill=rvwvcq] (5.994893848456461,-3.137390181646379) circle (2.5pt);
				\draw [fill=rvwvcq] (7.237687835864903,-3.137390181646379) circle (2.5pt);
				\draw [fill=rvwvcq] (7.208785650111218,-4.293477611793768) circle (2.5pt);
				\draw [fill=rvwvcq] (6.370622263254361,-4.322379797547453) circle (2.5pt);
				\draw [fill=rvwvcq] (5.6191654336585595,-4.293477611793768) circle (2.5pt);
				\draw [fill=rvwvcq] (9.723275810681786,-2.0102049372526745) circle (2.5pt);
				\draw [fill=rvwvcq] (9.741222235320619,-3.1534354774030278) circle (2.5pt);
				\draw [fill=rvwvcq] (8.926483153660023,-3.1534354774030278) circle (2.5pt);
				\draw [fill=rvwvcq] (8.9429167953323,-2.0391071230063593) circle (2.5pt);
				\draw [fill=rvwvcq] (8.956658675203009,-4.330280817579443) circle (2.5pt);
				\draw [fill=rvwvcq] (9.741222235320619,-4.330280817579443) circle (2.5pt);
				\draw [fill=rvwvcq] (8,-2) circle (2.5pt);
				\draw [fill=rvwvcq] (9.711046713777634,-1.1015000124800458) circle (2.5pt);
				\draw [fill=rvwvcq] (1.2317251601988464,10.304847130768392) circle (2.5pt);
				\draw [fill=rvwvcq] (-1.0616144770680143,10.335022652311379) circle (2.5pt);
				\draw [fill=rvwvcq] (-3.92828902365159,10.274671609225438) circle (2.5pt);
				\draw [fill=rvwvcq] (-3.2342520281629348,9.942740872252601) circle (2.5pt);
				\draw [fill=rvwvcq] (-2.3591619034163696,10.063442958424542) circle (2.5pt);
				\draw [fill=rvwvcq] (-1.9367046018145795,10.757479953913197) circle (2.5pt);
				\draw [fill=rvwvcq] (-1.9668801233575646,11.602394557116778) circle (2.5pt);
				\draw [fill=rvwvcq] (-2.5703905542172647,12.085202901804509) circle (2.5pt);
				\draw [fill=rvwvcq] (-3.44548067896383,12.085202901804509) circle (2.5pt);
				\draw [fill=rvwvcq] (-4.079166631366515,11.210112777057972) circle (2.5pt);
				\draw [fill=rvwvcq] (-3.5360072435927847,9.007299704420067) circle (2.5pt);
				\draw [fill=rvwvcq] (-2.449688468045325,9.007299704420067) circle (2.5pt);
				\draw [fill=rvwvcq] (-3.92828902365159,8.252911665845442) circle (2.5pt);
				\draw [fill=rvwvcq] (-3.92828902365159,7.498523627270816) circle (2.5pt);
				\draw [fill=rvwvcq] (-1.9367046018145795,8.252911665845442) circle (2.5pt);
				\draw [fill=rvwvcq] (-2.1479332526154744,7.377821541098876) circle (2.5pt);
				\draw [fill=rvwvcq] (-2.992847855819056,6.774311110239177) circle (2.5pt);
				\draw [fill=rvwvcq] (-2.962672334276071,5.778518899320671) circle (2.5pt);
				\draw [fill=rvwvcq] (-0.03564674460652397,8.735720010533203) circle (2.5pt);
				\draw [fill=rvwvcq] (-1.1521410416969697,8.102034058130517) circle (2.5pt);
				\draw [fill=rvwvcq] (-1.1219655201539847,7.226943933383953) circle (2.5pt);
				\draw [fill=rvwvcq] (0.024704298479446066,6.713960067153208) circle (2.5pt);
				\draw [fill=rvwvcq] (-0.005471223063538952,5.748343377777687) circle (2.5pt);
				\draw [fill=rvwvcq] (1.0204965093979517,8.102034058130517) circle (2.5pt);
				\draw [fill=rvwvcq] (1.0204965093979517,7.257119454926937) circle (2.5pt);
				\draw [fill=rvwvcq] (2.9215543666060078,8.735720010533203) circle (2.5pt);
				\draw [fill=rvwvcq] (2,8) circle (2.5pt);
				\draw [fill=rvwvcq] (4,8) circle (2.5pt);
				\draw [fill=rvwvcq] (2.0162887203164575,6.864837674868133) circle (2.5pt);
				\draw [fill=rvwvcq] (4.068224185239439,6.864837674868133) circle (2.5pt);
				\draw [fill=rvwvcq] (3.012080931234963,6.080274114750522) circle (2.5pt);
				\draw [fill=rvwvcq] (4.038048663696453,5.6879923346917165) circle (2.5pt);
				\draw [fill=rvwvcq] (5.99945756399048,6.623433502524252) circle (2.5pt);
				\draw [fill=rvwvcq] (5.094191917700929,6.925188717954103) circle (2.5pt);
				\draw [fill=rvwvcq] (5.033840874614959,7.981331971958578) circle (2.5pt);
				\draw [fill=rvwvcq] (5.99945756399048,8.554666881275294) circle (2.5pt);
				\draw [fill=rvwvcq] (7.14612738262391,8.011507493501563) circle (2.5pt);
				\draw [fill=rvwvcq] (7.14612738262391,6.925188717954103) circle (2.5pt);
				\draw [fill=rvwvcq] (6.995249774908985,5.748343377777687) circle (2.5pt);
				\draw [fill=rvwvcq] (5.184718482329885,5.657816813148732) circle (2.5pt);
				\draw [fill=rvwvcq] (8.896307632117042,8.645193445904248) circle (2.5pt);
				\draw [fill=rvwvcq] (8,8) circle (2.5pt);
				\draw [fill=rvwvcq] (8.021217507370476,7.136417368754997) circle (2.5pt);
				\draw [fill=rvwvcq] (8,6) circle (2.5pt);
				\draw [fill=rvwvcq] (9.047185239831967,6.442380373266342) circle (2.5pt);
				\draw [fill=rvwvcq] (9.017009718288982,5.627641291605746) circle (2.5pt);
				\draw [fill=rvwvcq] (9.801573278406591,7.136417368754997) circle (2.5pt);
				\draw [fill=rvwvcq] (9.741222235320622,8.011507493501563) circle (2.5pt);
				\draw [fill=rvwvcq] (-0.018821787130282503,10.42339470902732) circle (2.5pt);
				\draw [fill=rvwvcq] (0.7431964376790379,12.010932677380072) circle (2.5pt);
				\draw [fill=rvwvcq] (2.4577374435000086,11.121911415102531) circle (2.5pt);
				\draw [fill=rvwvcq] (3.6007647807139893,10.61389926522965) circle (2.5pt);
				\draw [fill=rvwvcq] (5.63281338020551,10.550397746495541) circle (2.5pt);
				\draw [fill=rvwvcq] (8.680886279442792,10.010634837255605) circle (2.5pt);
				\draw [fill=rvwvcq] (9.030144632480397,11.375917490038972) circle (2.5pt);
				\draw [fill=rvwvcq] (-0.4950831776361077,7.724580162827642) circle (2.5pt);
				\draw [fill=rvwvcq] (2.584740480968229,7.407072569157092) circle (2.5pt);
			\end{scriptsize}
		\end{tikzpicture}
	
			\caption{Twenty five possibilities for the complement of $G_3$ where \(I_{G_3}(z)= 1+8z +8z^2\). }
			\label{M_1}
		\end{figure}  
	\FloatBarrier
\end{proof}
It is in fact possible to construct more graphs with independence polynomials given in Lemma~\ref{examples-disconnected}.
\par 
The proof of Theorem~\ref{exampls-ind-no-4} is now presented.
\begin{proof}[Proof of Theorem~\ref{exampls-ind-no-4}]
The reduced independence polynomials of the graphs $G'$ and $G''$ whose complements are given in Figures~(\ref{Com G1}) and ~(\ref{Com G2}), are $16z+ 20z^2 +8z^3+z^4 $ and $16z+ 40z^2 +32 z^3+ 8z^4 $  respectively. It follows from Remark~\ref{indpoly} that $\mathcal{A}(G')= [-4,0]$ and $\mathcal{A}(G'')= [-2,0]$.
		\begin{figure}[h!]
		
		\begin{center}
			\begin{multicols}{2}
				\definecolor{ududff}{rgb}{0.30196078431372547,0.30196078431372547,1}
				\begin{tikzpicture}
					
					\draw [line width=2pt] (-9.04210631229236,-0.4900354374307869)-- (-8,-1);
					\draw [line width=2pt] (-8,-1)-- (-8.994939091915835,-1.6927995570321166);
					\draw [line width=2pt] (-9.04210631229236,-0.4900354374307869)-- (-8.994939091915835,-1.6927995570321166);
					\draw [line width=2pt] (-8,-1)-- (-7,-1);
					\draw [line width=2pt] (-6.046987818383169,-0.4900354374307869)-- (-7,-1);
					\draw [line width=2pt] (-7,-1)-- (-6.046987818383169,-1.6927995570321166);
					\draw [line width=2pt] (-6.046987818383169,-0.4900354374307869)-- (-6.046987818383169,-1.6927995570321166);
					\draw [line width=2pt] (-9.04210631229236,-2.6125603543743097)-- (-8.00442746400886,-3.4379867109634574);
					\draw [line width=2pt] (-8.00442746400886,-3.4379867109634574)-- (-9.04210631229236,-4.074744186046514);
					\draw [line width=2pt] (-9.04210631229236,-2.6125603543743097)-- (-9.04210631229236,-4.074744186046514);
					\draw [line width=2pt] (-8.00442746400886,-3.4379867109634574)-- (-6.919581395348838,-3.414403100775196);
					\draw [line width=2pt] (-6.046987818383169,-2.6125603543743097)-- (-6.919581395348838,-3.414403100775196);
					\draw [line width=2pt] (-6.919581395348838,-3.414403100775196)-- (-5.999820598006647,-4.074744186046514);
					\draw [line width=2pt] (-6.046987818383169,-2.6125603543743097)-- (-5.999820598006647,-4.074744186046514);
					\draw [line width=2pt] (-9.06568992248062,-5.301091915836105)-- (-9.06568992248062,-7.117029900332231);
					\draw [line width=2pt] (-9.06568992248062,-7.117029900332231)-- (-5.952653377630123,-7.069862679955707);
					\draw [line width=2pt] (-5.999820598006647,-5.253924695459583)-- (-5.952653377630123,-7.069862679955707);
					\draw [line width=2pt] (-9.06568992248062,-5.301091915836105)-- (-5.999820598006647,-5.253924695459583);
					\draw [line width=2pt] (-9.06568992248062,-5.301091915836105)-- (-5.952653377630123,-7.069862679955707);
					\draw [line width=2pt] (-5.999820598006647,-5.253924695459583)-- (-9.06568992248062,-7.117029900332231);
					\begin{scriptsize}
						\draw [fill=ududff] (-9.04210631229236,-0.4900354374307869) circle (2.5pt);
						\draw [fill=ududff] (-8.994939091915835,-1.6927995570321166) circle (2.5pt);
						\draw [fill=ududff] (-9.04210631229236,-2.6125603543743097) circle (2.5pt);
						\draw [fill=ududff] (-8,-1) circle (2.5pt);
						\draw [fill=ududff] (-7,-1) circle (2.5pt);
						\draw [fill=ududff] (-6.046987818383169,-0.4900354374307869) circle (2.5pt);
						\draw [fill=ududff] (-6.046987818383169,-1.6927995570321166) circle (2.5pt);
						\draw [fill=ududff] (-6.046987818383169,-2.6125603543743097) circle (2.5pt);
						\draw [fill=ududff] (-5.999820598006647,-4.074744186046514) circle (2.5pt);
						\draw [fill=ududff] (-9.04210631229236,-4.074744186046514) circle (2.5pt);
						\draw [fill=ududff] (-8.00442746400886,-3.4379867109634574) circle (2.5pt);
						\draw [fill=ududff] (-6.919581395348838,-3.414403100775196) circle (2.5pt);
						\draw [fill=ududff] (-9.06568992248062,-5.301091915836105) circle (2.5pt);
						\draw [fill=ududff] (-5.999820598006647,-5.253924695459583) circle (2.5pt);
						\draw [fill=ududff] (-9.06568992248062,-7.117029900332231) circle (2.5pt);
						\draw [fill=ududff] (-5.952653377630123,-7.069862679955707) circle (2.5pt);
					\end{scriptsize}
				\end{tikzpicture}
				\caption{\label{Com G1} Complement of $G'$}
				%\caption{Complement of $G_1$}
				
				\definecolor{ududff}{rgb}{0.30196078431372547,0.30196078431372547,1}
				\begin{tikzpicture}
					\draw [line width=2pt] (-2.652145192487453,-2.4301542052633107)-- (-2.074020416418786,2.8050868224696193);
					\draw [line width=2pt] (-2.652145192487453,-2.4301542052633107)-- (1.2020199813036607,-1.3060226962409023);
					\draw [line width=2pt] (1.2020199813036607,-1.3060226962409023)-- (-2.074020416418786,2.8050868224696193);
					\draw [line width=2pt] (-4.868290167417344,0.9743583649188402)-- (-5.253706684796455,-0.5673077045976054);
					\draw [line width=2pt] (-4.868290167417344,0.9743583649188402)-- (-3.9047488739695653,2.32331617574573);
					\draw [line width=2pt] (0.8808395501544013,1.3597748822979516)-- (-0.5323543469023405,2.5481424775502117);
					\draw [line width=2pt] (0.8808395501544013,1.3597748822979516)-- (1.3304921537633645,0.23564337327554333);
					\draw [line width=2pt] (-0.5323543469023405,2.5481424775502117)-- (-2.652145192487453,-2.4301542052633107);
					\draw [line width=2pt] (0.8808395501544013,1.3597748822979516)-- (-2.652145192487453,-2.4301542052633107);
					\draw [line width=2pt] (1.3304921537633645,0.23564337327554333)-- (-2.652145192487453,-2.4301542052633107);
					\draw [line width=2pt] (-5.253706684796455,-0.5673077045976054)-- (-2.652145192487453,-2.4301542052633107);
					\draw [line width=2pt] (-4.868290167417344,0.9743583649188402)-- (-2.652145192487453,-2.4301542052633107);
					\draw [line width=2pt] (-3.9047488739695653,2.32331617574573)-- (-2.652145192487453,-2.4301542052633107);
					\draw [line width=2pt] (1.3304921537633645,0.23564337327554333)-- (-2.074020416418786,2.8050868224696193);
					\draw [line width=2pt] (0.8808395501544013,1.3597748822979516)-- (-2.074020416418786,2.8050868224696193);
					\draw [line width=2pt] (-0.5323543469023405,2.5481424775502117)-- (-2.074020416418786,2.8050868224696193);
					\draw [line width=2pt] (-3.9047488739695653,2.32331617574573)-- (-2.074020416418786,2.8050868224696193);
					\draw [line width=2pt] (-5.253706684796455,-0.5673077045976054)-- (-2.074020416418786,2.8050868224696193);
					\draw [line width=2pt] (-4.868290167417344,0.9743583649188402)-- (-2.074020416418786,2.8050868224696193);
					\draw [line width=2pt] (1.3304921537633645,0.23564337327554333)-- (1.2020199813036607,-1.3060226962409023);
					\draw [line width=2pt] (-0.5323543469023405,2.5481424775502117)-- (1.2020199813036607,-1.3060226962409023);
					\draw [line width=2pt] (-5.253706684796455,-0.5673077045976054)-- (1.2020199813036607,-1.3060226962409023);
					\draw [line width=2pt] (-3.9047488739695653,2.32331617574573)-- (1.2020199813036607,-1.3060226962409023);
					\draw [line width=2pt] (-4.418637563808381,-1.9805016016543473)-- (-2.652145192487453,-2.4301542052633107);
					\draw [line width=2pt] (-4.418637563808381,-1.9805016016543473)-- (-5.253706684796455,-0.5673077045976054);
					\draw [line width=2pt] (-3.9047488739695653,2.32331617574573)-- (-4.418637563808381,-1.9805016016543473);
					\draw [line width=2pt] (-0.5323543469023405,2.5481424775502117)-- (-4.418637563808381,-1.9805016016543473);
					\draw [line width=2pt] (1.3304921537633645,0.23564337327554333)-- (-4.418637563808381,-1.9805016016543473);
					\draw [line width=2pt] (-0.0184656570635253,-2.141091817228977)-- (-2.652145192487453,-2.4301542052633107);
					\draw [line width=2pt] (-5.253706684796455,-0.5673077045976054)-- (-0.0184656570635253,-2.141091817228977);
					\draw [line width=2pt] (-3.9047488739695653,2.32331617574573)-- (-0.0184656570635253,-2.141091817228977);
					\draw [line width=2pt] (1.3304921537633645,0.23564337327554333)-- (-0.0184656570635253,-2.141091817228977);
					\draw [line width=2pt] (-0.5323543469023405,2.5481424775502117)-- (-0.0184656570635253,-2.141091817228977);
					\draw [line width=2pt] (-4.418637563808381,-1.9805016016543473)-- (-3.55145039970538,-2.9761609382170517);
					\draw [line width=2pt] (-0.0184656570635253,-2.141091817228977)-- (-1.1425971660859335,-2.879806808872274);
					\draw [line width=2pt] (-3.615686485935232,-3.843348102320052)-- (-3.55145039970538,-2.9761609382170517);
					\draw [line width=2pt] (-1.1425971660859335,-2.879806808872274)-- (-1.1104791229710076,-3.7791120160902003);
					\draw [line width=2pt] (-2.620027149372527,-3.8112300592051263)-- (-3.55145039970538,-2.9761609382170517);
					\draw [line width=2pt] (-1.1425971660859335,-2.879806808872274)-- (-2.620027149372527,-3.8112300592051263);
					\draw [line width=2pt] (-3.55145039970538,-2.9761609382170517)-- (-1.1425971660859335,-2.879806808872274);
					\begin{scriptsize}
						\draw [fill=ududff] (-2.074020416418786,2.8050868224696193) circle (2.5pt);
						\draw [fill=ududff] (-2.652145192487453,-2.4301542052633107) circle (2.5pt);
						\draw [fill=ududff] (-0.5323543469023405,2.5481424775502117) circle (2.5pt);
						\draw [fill=ududff] (1.3304921537633645,0.23564337327554333) circle (2.5pt);
						\draw [fill=ududff] (0.8808395501544013,1.3597748822979516) circle (2.5pt);
						\draw [fill=ududff] (1.2020199813036607,-1.3060226962409023) circle (2.5pt);
						\draw [fill=ududff] (-3.9047488739695653,2.32331617574573) circle (2.5pt);
						\draw [fill=ududff] (-4.868290167417344,0.9743583649188402) circle (2.5pt);
						\draw [fill=ududff] (-5.253706684796455,-0.5673077045976054) circle (2.5pt);
						\draw [fill=ududff] (-4.418637563808381,-1.9805016016543473) circle (2.5pt);
						\draw [fill=ududff] (-0.0184656570635253,-2.141091817228977) circle (2.5pt);
						\draw [fill=ududff] (-3.55145039970538,-2.9761609382170517) circle (2.5pt);
						\draw [fill=ududff] (-1.1425971660859335,-2.879806808872274) circle (2.5pt);
						\draw [fill=ududff] (-3.615686485935232,-3.843348102320052) circle (2.5pt);
						\draw [fill=ududff] (-1.1104791229710076,-3.7791120160902003) circle (2.5pt);
						\draw [fill=ududff] (-2.620027149372527,-3.8112300592051263) circle (2.5pt);
					\end{scriptsize}
				\end{tikzpicture}
				\caption{\label{Com G2} Complement of $G''$}	
				
			\end{multicols}	
			
		\end{center}
		
	\end{figure}
It follows from Lemma~\ref{examples-disconnected} that there are at least ten graphs whose independence polynomial is $1+12z+9z^2$. Since the independence polynomial of the path graph on $4$ vertices is $ 1+4z+3z^2$, it follows from  Proposition~\ref{disconnectedgraphs}(2) and  Table~\ref{2components-equalindnumb-k=3}  that there are at least ten non-isomorphic disconnected graphs  with two components, and their independence polynomial is \( (1+4z+3z^2)(1+12z+9z^2) \). The independence attractor in this case is $[-\frac{4}{3},0]$ as the situation corresponds to $k=3$ (see Table ~\ref{2components-equalindnumb-k=3}).
\par
There are at least $25$ non-isomorphic graphs whose independence polynomial is $1+8z +8z^2$ by Lemma~\ref{examples-disconnected}. The number of  graphs with two identical components, each of which is an above-mentioned graph,  is $25$ whereas those with non-isomorphic components is $300$. Thus, there are at least \(325\) disconnected graphs  with two components such that their independence polynomial is \(  (1+8z+8z^2)(1+8z+8z^2) \). It follows from  Proposition~\ref{disconnectedgraphs}(2) and  Table~\ref{2components-equalindnumb-k=4} that this corresponds to $k=4$ and the independence attractor   is $[-1,0]$.
\par
That there is no disconnected graph whose independence attractor is $[-4,0]$ or $[-2,0]$ follows from the last part of Proposition~\ref{disconnectedgraphs}.
\end{proof}
All the graphs considered in the proof of Theorem~\ref{exampls-ind-no-4} are either connected or with exactly two components. However, there are disconnected graphs with three components whose independence attractor is $[-\frac{4}{3},0]$.
    \begin{rem} The number of disconnected graphs \(G\) with three components such that \(I_G(z) = (1+z)(1+3z)(1+12z+9z^2)\) is at least ten (see Lemma~\ref{examples-disconnected} and Table~\ref{3components-k=3}). By Proposition~\ref{disconnectedgraphs}, their independence attractor is $[-\frac{4}{3},0]$.
 
      \end{rem}
      We conclude with three remarks.
      \begin{rem}
      	\begin{enumerate}
      		\item Using the arguments used in the proof of Proposition~\ref{disconnectedgraphs}, it seems possible to prove that, for each $n>4$ there is a graph whose independence attractor is a line segment.  
      		\item Studying graphs whose independence attractors are Jordan curves, other than circles seems to be an interesting problem.
      		\item  What are all the reduced independence polynomials that are conformally conjugate to a unicritical polynomial $z^n + c$ for some $c$? This is an important question as its answer  would lead to different tractable possibilities of independence attractors. The value of $c$, in this case should be highly restricted, since the coefficients of the reduced independence polynomials are all positive integers satisfying certain conditions.
      				\end{enumerate} 
      \end{rem}
 \textbf{Acknowledgement:} Moumita Manna is supported by  University Grants Commission, Govt. of India through a Junior Research Fellowship.
 
\bibliographystyle{amsplain}
\addcontentsline{toc}{chapter}{\numberline{}References}

\end{document}